\newcommand{\commentt}[2]{#1}
\newcommand{\comment}[1]{}
\newcommand{\cov}{{\rm Cov}}
\newcommand{\var}{{\rm Var}}
\newcommand{\tr}{{\rm tr}}
\newcommand{\std}{{\rm Std}}
\newcommand{\titt}{Unbiased time-average estimators for Markov chains}
\newcommand{\citet}{\citeasnoun}
\title{\titt}
\author{Nabil Kahal\'e
\thanks{\emph{ESCP Business School, 75011 Paris,
France; {e-mail: }{nkahale@escp.eu}.}}}
\date{\today}
\begin{document}

\newtheorem{example}{Example}[section]
\newtheorem{theorem}{Theorem}[section]
\newtheorem{conjecture}{Conjecture}[section]
\newtheorem{lemma}{Lemma}[section]
\newtheorem{proposition}{Proposition}[section]
\newtheorem{remark}{Remark}[section]
\newtheorem{corollary}{Corollary}[section]
\newtheorem{definition}{Definition}[section]
\numberwithin{equation}{section}
\maketitle
\newcommand{\ABSTRACT}[1]{\begin{abstract}#1\end{abstract}}
\newcommand{\citep}{\cite}
}
{
\documentclass[opre,nonblindrev]{informs3} 
\usepackage[margin=1in]{geometry}
\usepackage{textcomp}
\usepackage{pgfplots}
\DoubleSpacedXII



\usepackage[round]{natbib}
 \bibpunct[, ]{(}{)}{,}{a}{}{,}%
 \def\bibfont{\small}%
 \def\bibsep{\smallskipamount}%
 \def\bibhang{24pt}%
 \def\newblock{\ }%
 \def\BIBand{and}%

\TheoremsNumberedThrough     
\ECRepeatTheorems

\EquationsNumberedThrough    

\MANUSCRIPTNO{} 

\renewcommand{\qed}{\halmos}

\begin{document}
\RUNAUTHOR{Kahal\'e}
\RUNTITLE{Multilevel  methods for discrete Asian options}
\TITLE{General multilevel Monte Carlo methods for pricing discretely monitored
Asian options}
\ARTICLEAUTHORS{%
\AUTHOR{Nabil Kahal\'e}

\AFF{ESCP Business School, 75011 Paris, France, \EMAIL{nkahale@escp.eu} \URL{}}
} 
\KEYWORDS{
discretely monitored Asian option, multilevel Monte Carlo method, option pricing, variance reduction}
\bibliographystyle{informs2014} 
}

\ABSTRACT{
We consider a time-average estimator \(f_{k}\) of a functional of a Markov chain. Under a coupling assumption, we show that the expectation of \(f_{k}\) has a limit \(\mu\) as the number of time-steps goes to infinity. We describe a modification of \(f_{k}\)  that yields an unbiased estimator  \(\hat f_{k}\)  of \(\mu\). It is shown that  \(\hat f_{k}\)   is square-integrable  and has finite expected running time. Under certain conditions,    \(\hat f_{k}\)  can be built  without any precomputations, and is  asymptotically at least as efficient as  \(f_{k}\), up to a multiplicative constant  arbitrarily close to \(1\). Our approach  provides an unbiased  estimator for the bias of \(f_{k}\). We study applications to  volatility forecasting,  queues, and  the simulation of high-dimensional Gaussian vectors. Our numerical experiments are consistent with our theoretical findings. 
     }
     
\commentt{
Keywords:  multilevel Monte Carlo, unbiased estimator, steady-state, Markov chain, time-average estimator  
}{\maketitle
Subject classifications: Finance: asset pricing; Simulation: Efficiency ; Analysis of algorithms: Computational complexity}
\section{Introduction}
Markov chains arise in a variety of fields such as queuing  networks,  machine learning and health-care. The steady-state  of certain Markov chains   is accurately determined via analytical tools. For instance, in a \(M/M/m\) queue with \(m\) servers and exponentially distributed interarrival  and service times, the steady-state  distribution of customers in the system  is given by a simple analytical formulae. On the other hand, the steady-state behavior of  queuing networks with generally distributed interarrival and service times is intractable (see \cite{BandiBertsimasYoussef2015} for a detailed discussion).  Monte Carlo simulation can be used to study the steady-state of intractable systems. In general,     Monte Carlo simulation has a high computation cost, but its performance can be improved via  variance reduction techniques such as the control variate technique, moment matching, stratified sampling, importance sampling~\cite{glasserman2004Monte,asmussenGlynn2007} and multilevel Monte Carlo (MLMC) \cite{giles2015multilevel}. The related  Quasi-Monte Carlo method often outperforms standard Monte Carlo simulation in low-dimensional problems and in pricing of financial derivatives~\cite{glasserman2004Monte}.     
Another issue with Monte Carlo simulation is that it sometimes produces biased estimators. For instance, the price of a financial derivative obtained by standard Monte Carlo simulation and  discretization of a stochastic differential equation  is usually biased. Randomized Multilevel Monte Carlo methods (RMLMC) that provide unbiased estimators for expectations of functionals  associated with    stochastic differential equations are given in
\cite{mcleish2011,GlynnRhee2015unbiased}.
\comment{ Algorithms that find an optimal randomization distribution for RMLMC  estimators  are described in \cite{GlynnRhee2015unbiased,cui2021optimal,kahale2019optimal}.}\citet{jacob2015nonnegative} study the existence of unbiased nonnegative estimators. Unbiased estimators   have been used in  diverse settings including   Markov chain Monte Carlo methods~\cite{bardenet2017,agapiou2018unbiased,middleton2018unbiased,jacob2020unbiased},  estimating the expected cumulative discounted cost~\cite{cui2020optimalDisc},  pricing of discretely monitored Asian options~\cite{kahale2020Asian},  inference for hidden Markov model diffusions~\cite{vihola2021unbiasedInference}, and estimating the gradient of the  log-likelihood~\cite{jasra2021unbiased}. \citet{Vihola2018} describes  stratified versions of  RMLMC methods that, under general conditions, are  asymptotically as
efficient as MLMC. Unbiased estimators have the following advantages. First, a confidence interval is easily calculated from independent replications of an unbiased estimator. Second, taking the average of \(m\) independent copies of an unbiased estimator produces an unbiased estimator with a variance equal to that of the original estimator divided by \(m\). This leads to an efficient parallel computation of an unbiased estimator.

This paper considers a Markov chain  \((X_i,i\geq 0)\)  with state-space
\(F\) and deterministic initial value \(X_{0}\).
Let \(f\) be a  deterministic real-valued
measurable function on \(F\) such that \(f(X_{i})\) is square-integrable for \(i\geq0\). For \(k\ge1\), define the time-average estimator  
\begin{equation*}
f_{k}:=\frac{1}{k-b(k)}\sum ^{k-1}_{i=b(k)}f(X_{i}),
\end{equation*}
where  \(b(k)\ge0\) is a burn-in period that may depend on \(k\). The estimator \(f_{k}\) is often used to estimate the limit \(\mu\) of \(E(f(X_{m}))\) as   \(m\) goes to infinity, when such a limit exists.   \citet{whitt1991longRun} studies the performance of time-average estimators in a continuous-time framework. He  provides evidence that, in general, one long time-average estimator is more efficient than several independent replications of time-average estimators of shorter length. He  finds that, if the simulation length is large enough to obtain reasonable estimates of \(\mu\), then several independent replications are almost as efficient as one longer run. He also shows that, in general,  it is not efficient to run a very large number of independent replications with very short length.  

Time-average estimators have been used in various contexts such as  the sampling from a posterior distribution \cite{tierney1994markov}, computing the volume of a convex body \cite{VempalaCousins2016}, and estimating the steady-state performance metrics of time-dependent queues   \cite{whitt2019time}. For general Markov chains, however, time-average estimators have the following drawbacks~\cite[p. 96]{asmussenGlynn2007}. First, they are usually biased 
because, in general, the distribution of the   \(X_{i}\)'s is not the steady-state distribution. Second, because of the bias and since  the \(f(X_{i})\)'s are usually correlated,  calculating  confidence intervals  from time-average estimators is challenging.   The method of batch means (BM) divides a single time-average into several consecutive batches, and calculates an asymptotic confidence interval from the averages over each batch.  The quality of this confidence interval depends on the extent to which these averages are independent, identically distributed and Gaussian~\cite[p. 110]{asmussenGlynn2007}. A confidence interval can also be calculated via the method of independent replications (IR), that simulates independent copies of \(f_{k}\), but the quality of this confidence interval depends on the bias \(E(f_{k})-\mu\)  of \(f_{k}\). \citet{argon2013steady} study variants of the BM and IR methods.

Following 
 \citet{whitt1991longRun} and assuming \(\mu\neq0\), the bias of \(f_{k}\) can be reduced by setting \(b(k)\) equal to the smallest integer \(s\) such that \(|E(f(X_{i}))-\mu|\leq |\mu|\epsilon\) for \(i\ge s\),  where \(\epsilon\) is a small constant such as \(0.01\) or \(0.001\) (see also   \cite[p. 102]{asmussenGlynn2007}). In other words, the relative absolute bias is at most \(\epsilon\) at any time-step \(i\) larger than or equal to \(s\).  Such \(s\)  is closely related to the relaxation time (\citet[Chap. IV]{asmussenGlynn2007}), and an analytic expression or approximation for \(s\) or for the bias has been calculated  for certain Markov chain functionals. For instance,  \citet{whitt1991longRun}
 calculates \(s\) analytically  for the number of busy servers in an \(M/G/\infty\) queue, and provides an analytic approximation for \(s\) for the    \(M/M/1\) queue length process.   \citet[Chap. IV]{asmussenGlynn2007} show that, for general Markov chains and under suitable conditions, the bias of \(f_{k}\) is of order \(1/k\)  when \(b(k)=0\), and give an analytic approximation for the bias at a given time  for the \(GI/G/1\) queue waiting time process.  While explicit convergence rates  to the steady-state distribution have been established in the previous literature for many Markov chains (e.g. \cite{diaconis1991geometric,sinclair1992improved,VempalaCousins2016,kahaleGaussian2019,barkhagen2021stochastic,sinclair2022critical}), the mixing time of other Markov chains that arise in practice is not  formally known~\cite{diaconis2009markov}.\  Furthermore, the dependence  of the bias  \(E(f(X_{i}))-\mu\) on  \(i\) does not always follow the same pattern: in the  \(M/G/\infty\) queue example,    the  bias can decay polynomially or exponentially in \(i\), depending on the service time distribution. In the absence of knowledge on the relaxation time,    \citet[p. 102]{asmussenGlynn2007} suggest to select \(b(k)\) in an ad-hoc manner, by setting    \(b(k)=\lfloor k/10\rfloor\) for instance. 

The previous discussion shows that the  bias makes it  difficult to ascertain the quality of time-average estimators for general Markov chains. This paper provides a randomized multilevel framework for estimating and correcting the bias in time-average estimators. Under suitable conditions, we first construct a RMLMC unbiased estimator of the bias of a time-average estimator. Combining this estimator with a conventional time-average estimator yields  an unbiased estimator  \(\hat f_{k}\) of \(\mu\), that is, \(E(\hat f_{k})=\mu\).     Our construction is based on a coupling assumption and a  time-reversal transformation inspired from \citet{glynn2014exact}, and  a RMLMC estimator introduced by \citet{GlynnRhee2015unbiased}.  A similar  coupling is used in \cite{kahaRandomizedDimensionReduction20,KAHALE2022}
to design and analyse variance reduction algorithms for
time-varying
Markov chains with finite horizon. The main contributions of our paper are as follows:
\begin{enumerate}
\item Our approach constructs an unbiased square-integrable estimator, that can be simulated in finite expected time, of the bias of a time-average estimator. This  allows to estimate the bias and to determine the number of time-steps needed to substantially reduce it.
\item   \(\hat f_{k}\) is an unbiased estimator of \(\mu\), is square-integrable and can be computed in finite expected time. For a suitable choice of parameters and under certain assumptions, the work-normalized variance of   \(\hat f_{k}\) is at most equal to that of    \(f_{k}\), up to a multiplicative factor that can be made arbitrarily close to \(1\) as \(k\) goes to infinity. As shown by~\citet{glynn1992asymptotic}, the efficiency of an unbiased estimator can be measured through the work-normalized variance, i.e., the product of the variance and expected running time.     The smaller the work-normalized variance, the higher the efficiency. The performance of a biased estimator such as   \(f_{k}\)    incorporates its bias, in addition to its  variance \cite[p. 16]{glasserman2004Monte}. Thus, for an appropriate choice of parameters, \(\hat f_{k}\) is at least as efficient as  \(f_{k}\) as \(k\) goes to infinity, up to a multiplicative factor arbitrarily close to \(1\). 
\item Under suitable conditions, \(\hat f_{k}\) can be constructed without any precomputations or  knowledge of  the relaxation time or related properties of the chain.  Furthermore, our approach does not require any recurrence properties of the chain. In our numerical experiments, that use a conservative choice for the parameters,   \(f_{k}\) is about twice as efficient as    \(\hat f_{k}\) for large values of \(k\).
  
\comment{\item 
By simulating \(m\) independent replicates of   \(\hat f_{k}\), the variance of \(\hat f_{k}\)  can be estimated and a confidence interval for \(\mu\) can be constructed.
\item The average of the \(m\) replicates yields an unbiased estimator  \(\hat f_{k,m}\)    of \(\mu\) with a work-normalized variance at most equal to that of \(f_{k}\), up an additive term that goes to \(0\) as \(k\) goes to infinity. Thus, the estimation of \(\mu\) can be parallelized with the same asymptotic efficiency. 
}\end{enumerate}
For general Markov chains, we are not aware of a previous construction of an efficient   unbiased estimator of the bias of a time-average estimator, or of efficient unbiased estimators for \(\mu\) based on time-averaging. Assuming that \(f\) is Lipschitz and that \(X\) is `contractive on average',   \citet{glynn2014exact} construct square-integrable unbiased  RMLMC estimators for the steady-state expectation of Markov chain functionals. In view of the time-reversal transformation and  RMLMC estimator used, our techniques are  closely related to theirs. However, their method is not based on time-averaging, and our approach does not require  \(f\) to be Lipschitz or \(X\) to be contractive on average. We provide several examples where \(f\) is discontinuous and our method is provably efficient.           \citet{glynn2014exact}  also describe another unbiased estimator for positive recurrent
Harris chains. \citet{jacob2020unbiased} study unbiased Markov Chain Monte Carlo methods that use time-averaging.

The rest of the paper is organised as follows. Section~\ref{se:conventionalEstimators} presents the coupling assumption and  studies conventional time-average estimators under this assumption. In particular, it shows that the mean square error  \(E((f_{k}-\mu)^{2})\) is of order \(1/k\).
Section~\ref{se:UnbiasedEstimators} 
describes and analyses  an unbiased estimator of the bias of a time-average estimator. It also constructs  and studies   \(\hat f_{k}\) as well as a stratified version of   \(\hat f_{k}\)   under the coupling assumption. Section~\ref{se:examples} provides examples and Section~\ref{se:NumerExper} presents numerical experiments.
Omitted proofs are in the appendix.
Throughout the  paper, the running time refers to the number of arithmetic operations. For simplicity, it is supposed  that \( b(k)\le k/2\) for \(k\geq1\), and that the expected time to simulate  \(f_{k}\) is   \(k\) units of time. We assume   that  there are independent  and identically distributed (i.i.d.) random variables \(U_{i}\),  \(i\ge0\), that take values in a measurable space \(F'\),    and a measurable
function \(g\) from \(F\times F'\) to \(F\) such
that, for \( i\geq 0\), \begin{equation}\label{eq:recXi}
X_{i+1}=g(X_{i},U_{i}).
\end{equation} 
\section{Conventional time-average estimators}\label{se:conventionalEstimators} 
We introduce the coupling assumption in Subsection~\ref{sub:coupling} and use it in Subsection~\ref{sub:conventionalLongRun} to  establish bounds  on the bias, standard deviation and mean square error of conventional time-average estimators. Subsection~\ref{sub:Sharp}  describes an example showing the sharpness of the standard deviation and mean square error bounds.        

 \subsection{The coupling assumption}\label{sub:coupling}
Extend the  random sequence \((U_{i}, i\ge0)\) to all   \(i\in\mathbb{Z}\), so that  \(U_{i}\), \(i\in\mathbb{Z}\),  are i.i.d. random variables taking values in \(F'\).  For \(i\geq0\),  define recursively the measurable function  \(G_{i}\)  from \({ F\times F'}^{i}\) to \(F\)  by setting \(G_{0}(x):=x\) and\begin{equation*}
G_{i+1}(x;u_{0},\dots,u_{i}):=g(G_{i}(x;u_{0},\dots,u_{i-1}),u_{i}),
\end{equation*}for \(x\in F\) and \(u_{0},\dots,u_{i}\in F'\). It can be shown by  induction that,  for \(i\geq0\), \begin{equation}\label{eq:xiGi}
X_{i}=G_{i}(X_{0};U_{0},\dots,U_{i-1}).
\end{equation}  For \(m\in\mathbb{Z}\) and \(i\geq-m\), let
\begin{equation}\label{XimDefinition}
X_{i,m}:=G_{i+m}(X_{0};U_{-m},U_{-m+1},\dots,U_{i-1}).
\end{equation}   
Thus     \(X_{i,0}=X_{i}\) for \(i\geq0\). By \eqref{eq:xiGi},    \(X_{i,m}\sim X_{i+m}\) for \(m\in\mathbb{Z}\) and \(i\geq-m\), where `\(\sim\)' denotes equality in distribution. Furthermore,\begin{equation}\label{eq:recXim}
X_{-m,m}=X_{0},\text{ and } X_{i+1,m}=g(X_{i,m},U_{i}).
\end{equation} In other words,  \((X_{i,m}, i\geq -m)\) is a Markov chain that is a copy of    \((X_{i}, i\geq 0)\),  and is driven by  \((U_{i}, i\geq -m)\).  For \(i,m\geq0\), the last \(i\) random variables  driving the calculation of \(X_{i}\) and \(X_{i,m}\), i.e., \(U_{0},\dots,U_{i-1}\), are the same. This leads us to state the following assumption.
\begin{description}
\item[Assumption 1 (A1).]    There is a positive decreasing sequence \((\nu(i),i\geq0)\)  such that
\begin{equation}\label{eq:infiniteSumAssumption}
\sum^{\infty}_{i=0}\sqrt{\frac{\nu(i)}{i+1}}<\infty,
\end{equation}and, for \(i,m\geq0\),\begin{equation}\label{eq:rhoDef}
E((f(X_{i,m})-f(X_{i}))^{2})\leq\nu(i).
\end{equation}
\end{description}
Intuitively speaking, \eqref{eq:rhoDef} holds with a small \(\nu(i)\) if, for \(h\geq i\),
\(f(X_{h})\) is mainly determined by \((U_{h-i},\dots,U_{h-1})\), that is, if \(f(X_{h})\) depends to a large extent on the last \(i\) copies of \(U_{0}\) driving the Markov chain \((X_{k}:0\leq k\le h)\).  
Proposition~\ref{pr:relax} shows that Assumption A1 holds under a condition similar to  \eqref{eq:rhoDef}. Note that   \eqref{eq:rhoDef} and  \eqref{eq:rhoDefx} are identical if \(X_{0}=x\). \begin{proposition}\label{pr:relax}
Suppose there is \(x\in F\) and a positive decreasing sequence \((\nu'(i),i\geq0)\)   that satisfies \eqref{eq:infiniteSumAssumption} and, for \(i,m\geq0\),\begin{equation}\label{eq:rhoDefx}
E((f(G_{i}(x;U_{0},\dots,U_{i-1}))-f(X_{i,m}))^{2})\leq\nu'(i).
\end{equation}
Then Assumption A1 holds with \(\nu(i)=4\nu'(i)\) for \(i\geq0\). 
\end{proposition}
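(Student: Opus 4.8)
The plan is to compare both $f(X_{i,m})$ and $f(X_{i})$ to a common reference quantity $f(Z_{i})$, where $Z_{i}:=G_{i}(x;U_{0},\dots,U_{i-1})$ is the time-$i$ state of the copy of the chain started from the fixed point $x$ and driven by $U_{0},\dots,U_{i-1}$, and then to conclude by Minkowski's inequality. The point is that $Z_{i}$ is well defined irrespective of $X_{0}$, since $G_{i}$ takes the starting state as an explicit argument, so both $X_{i}$ (for arbitrary $X_{0}$) and $X_{i,m}$ can be meaningfully compared to it.

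Concretely, I would first apply the hypothesis \eqref{eq:rhoDefx} with $m=0$ and use $X_{i,0}=X_{i}$ to get $E((f(Z_{i})-f(X_{i}))^{2})\le\nu'(i)$ (this is exactly the remark that \eqref{eq:rhoDefx} reduces to \eqref{eq:rhoDef} when $X_{0}=x$, now used as an inequality valid for any $X_{0}$). Then I would apply \eqref{eq:rhoDefx} with an arbitrary $m\ge0$ to get $E((f(Z_{i})-f(X_{i,m}))^{2})\le\nu'(i)$. The triangle inequality in $L^{2}$ then yields
\[
\bigl\|f(X_{i,m})-f(X_{i})\bigr\|_{2}\le\bigl\|f(X_{i,m})-f(Z_{i})\bigr\|_{2}+\bigl\|f(Z_{i})-f(X_{i})\bigr\|_{2}\le 2\sqrt{\nu'(i)},
\]
so that $E((f(X_{i,m})-f(X_{i}))^{2})\le 4\nu'(i)$, which is \eqref{eq:rhoDef} with $\nu(i)=4\nu'(i)$.

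It remains to check that $(\nu(i),i\ge0)=(4\nu'(i),i\ge0)$ has the structure required in Assumption A1: it is positive and decreasing because $(\nu'(i))$ is, and $\sum_{i\ge0}\sqrt{\nu(i)/(i+1)}=2\sum_{i\ge0}\sqrt{\nu'(i)/(i+1)}<\infty$, so \eqref{eq:infiniteSumAssumption} holds. I do not expect any real obstacle here; the only step that needs a moment's thought is recognizing that the $m=0$ instance of the hypothesis already controls the gap between the fixed-start chain $Z_{i}$ and $X_{i}$, after which the argument is a single application of Minkowski's inequality, with the constant $4$ arising simply from squaring the factor $2$.
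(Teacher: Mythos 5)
Your proof is correct and follows essentially the same route as the paper: both compare $f(X_{i,m})$ and $f(X_{i})$ to the common reference $f(G_{i}(x;U_{0},\dots,U_{i-1}))$, use the $m=0$ case of \eqref{eq:rhoDefx} to bound the second gap, and combine. The only cosmetic difference is that you invoke Minkowski's inequality in $L^{2}$ while the paper uses the pointwise bound $E((Z+Z')^{2})\le 2(E(Z^{2})+E(Z'^{2}))$; since the two bounds being combined are equal to $\nu'(i)$, both yield the constant $4$.
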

\begin{proof}
Applying \eqref{eq:rhoDefx} with \(m=0\) shows that, for \(i\geq0\), \begin{equation*}
E((f(G_{i}(x;U_{0},\dots,U_{i-1}))-f(X_{i}))^{2})\leq\nu'(i).
\end{equation*} 
Together with  \eqref{eq:rhoDefx}, and since \(E((Z+Z')^{2})\le2(E(Z^{2})+E({Z'}^{2}))\) for square-integrable random variables \(Z\) and \(Z'\), this implies that, for \(i,m\geq0\),
\begin{equation*}
E((f(X_{i,m})-f(X_{i}))^{2})\leq4\nu'(i).
\end{equation*}
\end{proof}  

Assumption A2 stated below is stronger than Assumption A1 and  says that the \(\nu(i)\)'s decay exponentially with \(i\).
\begin{description}
\item[Assumption 2 (A2).] Assumption A1 holds with \(\nu(i)\le c e^{-\xi i}\) for \(i\geq0\), where  \(c\) and \(\xi\) are positive constants with \(\xi\leq1\).
 \end{description}Proposition~\ref{pr:contractive1} shows that Assumption A1 holds under   certain conditions. As \(\eta\leq e^{\eta-1}\) for \(\eta\in\mathbb{R}\), Assumption A2 holds as well under the same conditions.    
\begin{proposition}
\label{pr:contractive1}Assume that \(F\) is a metric space with metric \(\rho:F\times F\rightarrow\mathbb{R}_{+}\) and there   are positive constants \(\eta\), \(\kappa\), \(\kappa'\) and \(\gamma\) with \(\eta<1\) such that, for \(i,m\geq0\), \begin{equation}\label{eq:defkappaLips}
E((f(X_{i,m})-f(X_{i}))^{2})\leq \kappa^{2}( E(\rho^{2}(X_{i,m},X_{i})))^{\gamma}, \end{equation} 
and \begin{equation}\label{eq:distanceFinite}
E(\rho^{2}(X_{0},X_{i}))\leq\kappa',
\end{equation}and, for \(x,x'\in F\),
\begin{equation}\label{eq:distanceContractive}
E(\rho^{2}( g(x,U_{0}),g(x',U_{0})))\leq\eta \rho^{2}( x,x').
\end{equation} 
Then Assumption A1 holds with \(\nu(i)=\kappa^{2}\kappa'^{\gamma}\eta^{\gamma i}\) for \(i\geq0\).\end{proposition}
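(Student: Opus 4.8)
The plan is to verify the three requirements of Assumption~A1 directly for $\nu(i):=\kappa^{2}\kappa'^{\gamma}\eta^{\gamma i}$. Positivity is immediate, and since $0<\eta<1$ and $\gamma>0$ one has $\nu(i+1)/\nu(i)=\eta^{\gamma}<1$, so $(\nu(i),i\ge0)$ is decreasing. For \eqref{eq:infiniteSumAssumption}, I would bound $\sqrt{\nu(i)/(i+1)}\le\sqrt{\nu(i)}=\kappa\kappa'^{\gamma/2}(\eta^{\gamma/2})^{i}$ and sum the resulting geometric series of ratio $\eta^{\gamma/2}<1$. The only substantive point is thus \eqref{eq:rhoDef}: by \eqref{eq:defkappaLips} and monotonicity of $t\mapsto t^{\gamma}$ on $\mathbb{R}_{+}$, it suffices to show the distance estimate $E(\rho^{2}(X_{i,m},X_{i}))\le\kappa'\eta^{i}$ for all $i,m\ge0$, since then $E((f(X_{i,m})-f(X_{i}))^{2})\le\kappa^{2}(\kappa'\eta^{i})^{\gamma}=\nu(i)$.

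To prove the distance estimate I would first realize $X_{i,m}$ and $X_{i}$ as the images of two (possibly different) starting states under the \emph{same} driving noise. By \eqref{eq:recXim}, $(X_{j,m},j\ge0)$ is the Markov chain driven by $U_{0},U_{1},\dots$ started from $X_{0,m}$, so, exactly as in the induction giving \eqref{eq:xiGi}, $X_{i,m}=G_{i}(X_{0,m};U_{0},\dots,U_{i-1})$, whereas $X_{i}=G_{i}(X_{0};U_{0},\dots,U_{i-1})$. Put $\mathcal{F}_{j}:=\sigma(X_{0},X_{0,m},U_{0},\dots,U_{j-1})$, so that $X_{j,m}$ and $X_{j}$ are $\mathcal{F}_{j}$-measurable while $U_{j}$ is independent of $\mathcal{F}_{j}$ with the law of $U_{0}$. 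Applying \eqref{eq:distanceContractive} conditionally on $\mathcal{F}_{j}$ gives $E(\rho^{2}(X_{j+1,m},X_{j+1})\mid\mathcal{F}_{j})\le\eta\,\rho^{2}(X_{j,m},X_{j})$; taking expectations and iterating over $j=0,\dots,i-1$ yields $E(\rho^{2}(X_{i,m},X_{i}))\le\eta^{i}E(\rho^{2}(X_{0,m},X_{0}))$. Finally, $X_{0}$ is deterministic and $X_{0,m}\sim X_{m}$ by \eqref{eq:xiGi}, hence $E(\rho^{2}(X_{0,m},X_{0}))=E(\rho^{2}(X_{m},X_{0}))\le\kappa'$ by \eqref{eq:distanceFinite}, which completes the argument.

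The main obstacle is the conditional use of \eqref{eq:distanceContractive}: that inequality is stated for deterministic $x,x'$, so substituting the $\mathcal{F}_{j}$-measurable random states $X_{j,m},X_{j}$ needs a standard measure-theoretic justification — apply Fubini (or a regular conditional distribution for $U_{j}$ given $\mathcal{F}_{j}$) to the measurable map $(x,x')\mapsto E(\rho^{2}(g(x,U_{0}),g(x',U_{0})))$, using independence of $U_{j}$ from $\mathcal{F}_{j}$ and $U_{j}\sim U_{0}$. I would also carry along the induction the finiteness $E(\rho^{2}(X_{j,m},X_{j}))\le\kappa'\eta^{j}<\infty$, so that passing from the conditional inequality to the unconditional one is legitimate at every step. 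Everything else — the geometric-series bound for \eqref{eq:infiniteSumAssumption} and the monotonicity step — is routine.
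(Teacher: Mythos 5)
Your proof is correct and follows essentially the same route as the paper's: an induction on $i$ establishing $E(\rho^{2}(X_{i,m},X_{i}))\le\kappa'\eta^{i}$, with base case from $X_{0,m}\sim X_{m}$ and \eqref{eq:distanceFinite}, inductive step from \eqref{eq:distanceContractive} applied conditionally (using $X_{i+1,m}=g(X_{i,m},U_{i})$ together with \eqref{eq:recXi}), and then \eqref{eq:defkappaLips} to conclude. Your additional remarks — the geometric-series check of \eqref{eq:infiniteSumAssumption}, the monotonicity of $\nu$, and the Fubini/conditional-distribution justification for substituting random states into \eqref{eq:distanceContractive} — are all points the paper leaves implicit, and they are correctly handled.
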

The generalized Lipschitz condition~\eqref{eq:defkappaLips} obviously holds for Lipschitz functions. Examples of non-Lipschitz functions, including discontinuous functions, that satisfy~\eqref{eq:defkappaLips}, are given by  \citet{kahaleGaussian2019}  in the context of simulating high-dimensional Gaussian vectors.  Condition~\eqref{eq:distanceFinite} says that the expected square distance between \(X_{0}\) and \(X_{i}\) is bounded. The contractivity  condition~\eqref{eq:distanceContractive} is used by~\citet{glynn2014exact} to obtain unbiased estimators for Markov chains.
\subsection{Convergence properties}\label{sub:conventionalLongRun}
Given a non-negative sequence \((\omega(i),i\geq0)\) such that \(\sum^{\infty}_{i=0}\sqrt{{\omega(i)}/(i+1)}\) is finite,  set
\begin{equation*}
\overline{\omega}(j):=\sum^{\infty}_{i=j}\sqrt{\frac{\omega(i)}{i+1}},
\end{equation*}   for \(j\ge0\). Note that \(\overline{\omega}(j)\)  is finite and is a decreasing function of \(j\), and that  \(\overline{\omega}(j)\)   goes to \(0\) as \(j\) goes to infinity. 
Theorem \ref{th:biasedLongRun} shows that, under Assumption A1, the sequence  \((E(f(X_{h})),h\ge0)\)  is convergent and examines the convergence properties of \(f(X_{h})\) and of  standard  time-average estimators.  
\begin{theorem}
\label{th:biasedLongRun}Suppose that Assumption A1 holds. Then \(E(f(X_{h}))\) has a finite limit \(\mu\)  as \(h\) goes to infinity. For \(h\ge 0\), \begin{equation}\label{eq:thBias}
|E(f(X_{h}))-\mu|\leq\sqrt{\nu(h)}.
\end{equation}For  \(h\ge 0\) and  \(k>0\),
 \begin{equation}\label{eq:thLongRunBias}
|E(\frac{1}{k}\sum^{h+k-1}_{i=h}f(X_{i}))-\mu|\leq \frac{\overline{\nu}(\lfloor h/2\rfloor)}{\sqrt{k}},
\end{equation}
and\begin{equation}\label{eq:thMSE}
E\left(\left((\frac{1}{k}\sum^{h+k-1}_{i=h}f(X_{i}))-\mu\right)^{2}\right)\leq\frac{26(\overline{\nu}(0))^{2}}{k}.
\end{equation}
\end{theorem}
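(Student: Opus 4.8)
The plan is to prove the three assertions in turn: the existence of $\mu$ together with \eqref{eq:thBias}, then \eqref{eq:thLongRunBias}, then \eqref{eq:thMSE}.

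First I would treat the limit and \eqref{eq:thBias}. Since $X_{i,m}\sim X_{i+m}$ for $i,m\ge0$, we have $E(f(X_{i+m}))=E(f(X_{i,m}))$, so by the Cauchy--Schwarz inequality and \eqref{eq:rhoDef},
\[
|E(f(X_{i+m}))-E(f(X_i))|=|E(f(X_{i,m})-f(X_i))|\le\bigl(E((f(X_{i,m})-f(X_i))^2)\bigr)^{1/2}\le\sqrt{\nu(i)}.
\]
Because $(\nu(i))$ is positive and decreasing with $\sum_i\sqrt{\nu(i)/(i+1)}<\infty$, the terms $\sqrt{\nu(i)/(i+1)}$ are decreasing and summable, hence $i\nu(i)\to0$ and in particular $\nu(i)\to0$. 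The displayed inequality then shows $(E(f(X_h)))_{h\ge0}$ is Cauchy, so it converges to a finite $\mu$, and letting $m\to\infty$ in the same inequality gives \eqref{eq:thBias}.

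For \eqref{eq:thLongRunBias}, averaging \eqref{eq:thBias} over $i\in\{h,\dots,h+k-1\}$ reduces the claim to the deterministic inequality $\sum_{i=h}^{h+k-1}\sqrt{\nu(i)}\le\sqrt k\,\overline\nu(\lfloor h/2\rfloor)$. I would establish this by writing $\sqrt{\nu(i)}=\sqrt{i+1}\,\sqrt{\nu(i)/(i+1)}$, applying Cauchy--Schwarz to the $k$-term sum, and then using that $\nu$ is decreasing (so $\sqrt{\nu(i)/(i+1)}$ is decreasing and every $i\ge h$ exceeds $\lfloor h/2\rfloor$) to dominate the resulting factors by $\overline\nu(\lfloor h/2\rfloor)$; the shift to $\lfloor h/2\rfloor$ and the $1/(i+1)$ weight built into $\overline\nu$ are exactly what let the $\sqrt{i+1}$ factors be absorbed. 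Pinning down the clean constant $1$ (rather than merely an absolute constant) is the one delicate point here.

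For \eqref{eq:thMSE}, write $\bar f:=\frac1k\sum_{i=h}^{h+k-1}f(X_i)$, so $E((\bar f-\mu)^2)=\var(\bar f)+(E(\bar f)-\mu)^2$; by \eqref{eq:thLongRunBias} and the monotonicity of $\overline\nu$ the second term is at most $\overline\nu(0)^2/k$, so the task is to bound $\var(\bar f)$ by a constant times $\overline\nu(0)^2/k$. A term-by-term covariance bound is not enough: for $i<j$, replacing $f(X_j)$ by the (independent of $X_i$) variable $f(X_{j,-(i+1)})$ and using \eqref{eq:rhoDef} with the uniform bound $\std(f(X_\ell))\le\sqrt{\nu(0)}$ (obtained by coupling $X_\ell$ back to $X_0$ via \eqref{eq:recXim}) yields only $|\cov(f(X_i),f(X_j))|\le\sqrt{\nu(0)\nu(|i-j|)}$, and $\sum_d\sqrt{\nu(d)}$ may diverge under Assumption A1. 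The plan is instead a dyadic telescoping. For $0\le r\le i$, the variable $Z_i^{(r)}:=f\bigl(G_r(X_0;U_{i-r},\dots,U_{i-1})\bigr)$ depends only on the block $U_{i-r},\dots,U_{i-1}$ and, by \eqref{eq:recXim} and \eqref{eq:rhoDef}, satisfies $\|Z_i^{(r)}-f(X_i)\|_2\le\sqrt{\nu(r)}$ (with $Z_i^{(i)}=f(X_i)$). Telescoping $f(X_i)-f(X_0)$ through $r=1,2,4,\dots$ produces increments $\Delta_i^{[\ell]}$ with $\|\Delta_i^{[\ell]}\|_2\le2\sqrt{\nu(2^\ell)}$, each depending on a window of at most $2^{\ell+1}$ consecutive $U$'s; hence $\Delta_i^{[\ell]}$ and $\Delta_{i'}^{[\ell]}$ are independent once $|i-i'|\ge2^{\ell+1}$, so counting the $O(k2^\ell)$ non-vanishing covariance terms gives $\var\bigl(\sum_{i=h}^{h+k-1}\Delta_i^{[\ell]}\bigr)\le c\,k\,2^\ell\nu(2^\ell)$. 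Summing over $\ell$ with the triangle inequality for the $L^2$-norm, $\std\bigl(\sum_{i=h}^{h+k-1}(f(X_i)-f(X_0))\bigr)\le c'\sqrt k\sum_\ell 2^{\ell/2}\sqrt{\nu(2^\ell)}$, and finally $\sum_\ell 2^{\ell/2}\sqrt{\nu(2^\ell)}\le c''\,\overline\nu(0)$, since on the block $[2^\ell,2^{\ell+1})$ each of the $2^\ell$ (decreasing) summands $\sqrt{\nu(d)/(d+1)}$ of $\overline\nu(0)$ is of order at least $2^{-\ell/2}\sqrt{\nu(2^{\ell+1})}$. Dividing by $k^2$ yields $\var(\bar f)=O(\overline\nu(0)^2/k)$, and adding the squared-bias term gives \eqref{eq:thMSE}. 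The main obstacle is exactly this variance estimate: under Assumption A1 the covariances need not decay fast enough to be summable, so the $1/k$ rate is recovered only by exploiting the cancellation in the full sum through the dyadic decomposition, after which tracking the numerical constants to reach the stated value $26$ (and the constant $1$ in \eqref{eq:thLongRunBias}) is careful but routine bookkeeping.
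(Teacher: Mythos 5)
Your proposal is correct, and the first two parts closely mirror the paper's own argument: the Cauchy sequence argument for $\mu$ and \eqref{eq:thBias} is identical, and your route to \eqref{eq:thLongRunBias} via the triangle inequality plus a Cauchy--Schwarz step and a tail bound on $\sum\nu(i)$ is just a reformulation of the paper's use of Jensen's inequality together with its Proposition~\ref{pr:2kk'} (which gives $\sum_{i\ge 2\lfloor h/2\rfloor}\nu(i)\le(\overline{\nu}(\lfloor h/2\rfloor))^2$). One caution on that step: the specific manipulation you sketch, factoring $\sqrt{\nu(i)}=\sqrt{i+1}\sqrt{\nu(i)/(i+1)}$ before applying Cauchy--Schwarz, introduces a spurious $\sqrt{h+k}$ factor and will not recover the constant $1$; you should instead apply Cauchy--Schwarz directly to $\sum_{i=h}^{h+k-1}\sqrt{\nu(i)}\le\sqrt{k}\,\bigl(\sum_{i=h}^{h+k-1}\nu(i)\bigr)^{1/2}$ and then invoke the tail bound. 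You flagged the constant as the delicate point, and indeed that is exactly where your sketch needs repair.

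Where you genuinely diverge from the paper is in the variance bound, the heart of \eqref{eq:thMSE}. The paper proves $\std\bigl(\sum_{i=h}^{h+k-1}f(X_i)\bigr)\le 5\overline{\nu}(0)\sqrt{k}$ (Lemma~\ref{le:StdSum}) by a recursive halving induction: it defines $\sigma_l$, splits the $k$-term sum into two blocks of length $\approx k/2$, replaces the second block by an independent time-shifted copy via Proposition~\ref{pr:shiftedPair}, and bounds the coupling error term by $\sum_{i=1}^{k-j}\sqrt{\nu(i)}$; the induction closes because of the precise form of $\sigma_l$. Your argument instead fixes $i$ and telescopes $f(X_i)$ through the finite-memory approximants $Z_i^{(2^\ell)}=f(G_{2^\ell}(X_0;U_{i-2^\ell},\dots,U_{i-1}))$, obtaining for each dyadic scale $\ell$ a row of increments with $L^2$-norm $\lesssim\sqrt{\nu(2^\ell)}$ and finite range of dependence $2^{\ell+1}$, then sums variances across rows. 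Both arguments exploit the same dyadic structure and the same shift-invariance of the coupling (your bound $\|Z_i^{(r)}-f(X_i)\|_2\le\sqrt{\nu(r)}$ is exactly Proposition~\ref{pr:shiftedPair} in disguise), but yours is a bottom-up chaining decomposition while the paper's is a top-down merge-style induction. The chaining version is arguably easier to visualize and avoids the bookkeeping of the induction hypothesis; the paper's induction is somewhat sharper in the constants it produces. In particular, a straightforward accounting of your row-wise variance bounds and the triangle inequality over scales gives a numerical constant noticeably larger than the paper's $5$ in Lemma~\ref{le:StdSum} (and hence larger than $26$ in \eqref{eq:thMSE}), so ``reaching the stated value $26$'' is not purely routine bookkeeping from your decomposition; one would need to tighten the telescoping (e.g.\ avoid the naive triangle inequality for $\|\Delta_i^{[\ell]}\|_2$ and the crude covariance count) or simply accept a worse absolute constant, which does not affect the substance of the theorem.
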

Equations \eqref{eq:thBias},  \eqref{eq:thLongRunBias} and \eqref{eq:thMSE}  provide upper-bounds  on the absolute bias of \(f(X_{h})\),  and on the absolute  bias and  mean square error  of time-average estimators of \(\mu\).
Under Assumption A1, Theorem~\ref{th:biasedLongRun} implies that \(f_{k}\) is an estimator of \(\mu\) with mean square error \(E((f_{k}-\mu)^{2})=O(1/k)\). Furthermore, if \(b(k)\)  goes to infinity with \(k\), then  \(|E(f_{k})-\mu|=o(1/\sqrt{k})\), i.e.,   \(\sqrt{k}|E(f_{k})-\mu|\) goes to \(0\) as \(k\) goes to infinity.
Also, if  \(\sum^{\infty}_{i=0}\sqrt{{\nu(i)}}<\infty\), then \eqref{eq:thBias} implies immediately a bound of order \(1/k\) on \(|E(f_{k})-\mu|\). In both cases, the absolute bias  \(|E(f_{k})-\mu|\) is asymptotically negligible, as \(k\) goes to infinity, in comparison with the bound of  order \(1/\sqrt{k}\)   on \(\std(f_{k})\)  implied by Lemma \ref{le:StdSum} below. 
The proof of Theorem~\ref{th:biasedLongRun} relies on Lemma \ref{le:StdSum}.
\begin{lemma}\label{le:StdSum}
Suppose that A1 holds. Then, for \(h\ge0\) and  \(k>0\),\begin{equation}\label{eq:stdSumBound}
\std(\frac{1}{k}\sum^{h+k-1}_{i=h}f(X_{i}))\leq\frac{5\overline{\nu}(0)}{\sqrt{k}}.
\end{equation}
\end{lemma}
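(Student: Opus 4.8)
The plan is to bound $\var(S)$, where $S:=\sum_{i=h}^{h+k-1}f(X_i)$, since the left-hand side of \eqref{eq:stdSumBound} is $\sqrt{\var(S)}/k$. The main device will be to approximate each $f(X_i)$ by a quantity depending only on the last few innovations driving the chain, and to telescope over \emph{dyadic} lookback lengths.

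First I would set, for $i\ge0$ and $\ell\ge0$, $\ell':=\min(\ell,i)$ and $Z_{i,\ell}:=f(G_{\ell'}(X_0;U_{i-\ell'},\dots,U_{i-1}))$, so that $Z_{i,\ell}$ depends only on $(U_{i-\ell'},\dots,U_{i-1})$, that $Z_{i,0}=f(X_0)$, and that $Z_{i,\ell}=f(X_i)$ whenever $\ell\ge i$. The key estimate to establish is $\|Z_{i,\ell}-f(X_i)\|_2\le\sqrt{\nu(\ell')}$; it is trivial for $\ell\ge i$, and for $\ell<i$ one notes that $Z_{i,\ell}=f(X_{i,\ell-i})$ in the notation \eqref{XimDefinition} and that, since the $U_j$ are i.i.d., the pair $(X_{i,\ell-i},X_i)$ has the same law as $(X_{\ell,i-\ell},X_\ell)$ --- both arise by running $G_\ell$ from $X_0$ along a block of $\ell$ consecutive innovations and $G_i$ from $X_0$ along a block of $i$ consecutive innovations, the shorter block being the right end of the longer one, so the two pairs coincide after translating all innovation indices. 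Hence $E((Z_{i,\ell}-f(X_i))^2)=E((f(X_{\ell,i-\ell})-f(X_\ell))^2)\le\nu(\ell)$ by \eqref{eq:rhoDef}. In particular $\var(f(X_i))=\var(f(X_i)-Z_{i,0})\le\nu(0)$.

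Next, for $i\in\{h,\dots,h+k-1\}$ and $T:=\lceil\log_2(h+k)\rceil$ (so $2^T\ge i$ and $Z_{i,2^T}=f(X_i)$), I would telescope
\[
f(X_i)-f(X_0)=(Z_{i,1}-Z_{i,0})+\sum_{t=0}^{T-1}\Delta_{i,t},\qquad\Delta_{i,t}:=Z_{i,2^{t+1}}-Z_{i,2^t},
\]
noting that $\Delta_{i,t}$ depends only on a block of at most $2^{t+1}$ consecutive innovations ending at $U_{i-1}$, that it vanishes unless $i>2^t$, and that otherwise $\|\Delta_{i,t}\|_2\le\|Z_{i,2^{t+1}}-f(X_i)\|_2+\|f(X_i)-Z_{i,2^t}\|_2\le2\sqrt{\nu(2^t)}$. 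Summing over $i$ gives $S-kf(X_0)=R_{-1}+\sum_{t=0}^{T-1}R_t$ with $R_{-1}:=\sum_i(Z_{i,1}-Z_{i,0})$ and $R_t:=\sum_i\Delta_{i,t}$. The summands of $R_{-1}$ are independent (the $i$-th depends only on $U_{i-1}$, or is $0$) and each has $L^2$-norm at most $2\sqrt{\nu(0)}$, so $\var(R_{-1})\le4k\nu(0)$; and for fixed $t$ the sequence $(\Delta_{i,t})_i$ is $2^{t+1}$-dependent (i.e.\ $\Delta_{i,t}$ and $\Delta_{i',t}$ are independent whenever $|i-i'|\ge2^{t+1}$, their windows then being disjoint), so the standard finite-range variance bound yields $\var(R_t)\le2\cdot2^{t+1}\cdot k\cdot4\nu(2^t)=2^{t+4}k\nu(2^t)$, i.e.\ $\|R_t\|_2\le4\sqrt{k}\sqrt{2^t\nu(2^t)}$. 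The triangle inequality in $L^2$ then gives $\std(S)\le2\sqrt{k\nu(0)}+4\sqrt{k}\sum_{t\ge0}\sqrt{2^t\nu(2^t)}$. To finish I would compare the dyadic series with $\overline\nu(0)$: for $t\ge1$ and $d\in\{2^{t-1},\dots,2^t-1\}$, monotonicity of $\nu$ gives $\sqrt{\nu(d)/(d+1)}\ge\sqrt{\nu(2^t)/2^t}$, so summing over the $2^{t-1}$ such $d$ yields $\sqrt{2^t\nu(2^t)}\le2\sum_{d=2^{t-1}}^{2^t-1}\sqrt{\nu(d)/(d+1)}$; since these blocks partition $\{1,2,\dots\}$, $\sum_{t\ge1}\sqrt{2^t\nu(2^t)}\le2\overline\nu(0)$, the $t=0$ term is $\sqrt{\nu(1)}\le\overline\nu(0)$, and $\sqrt{\nu(0)}\le\overline\nu(0)$. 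This gives $\std(S)=O(\sqrt{k}\,\overline\nu(0))$, hence a bound of the form $c\,\overline\nu(0)/\sqrt k$ for $\std(\tfrac1k\sum_{i=h}^{h+k-1}f(X_i))$; a more careful accounting (a sharper finite-range bound and a more economical grouping of the telescoping pieces) is then needed to bring $c$ down to $5$.

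The hard part will be twofold. Conceptually, one must notice that the telescoping has to be \emph{dyadic}: a linear telescoping $f(X_i)-f(X_0)=\sum_{\ell\ge1}(Z_{i,\ell}-Z_{i,\ell-1})$ would produce the series $\sum_\ell\sqrt{\ell\,\nu(\ell)}$, which can diverge under Assumption~A1, whereas the dyadic series $\sum_t\sqrt{2^t\nu(2^t)}$ is controlled by $\overline\nu(0)$ precisely because of the normalisation in \eqref{eq:infiniteSumAssumption}. Technically, the delicate point is the estimate $\|Z_{i,\ell}-f(X_i)\|_2\le\sqrt{\nu(\ell)}$ for $\ell<i$: \eqref{eq:rhoDef} is assumed only for a non-negative shift, so one has to exploit the translation invariance of the innovation sequence to recognise $(Z_{i,\ell},f(X_i))$ as a relabelling of a pair covered by \eqref{eq:rhoDef}. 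Finally, carrying the numerical constants through to reach exactly $5$ is the remaining bookkeeping.
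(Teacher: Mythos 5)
Your proposal is correct in outline and yields a bound of the right form $c\,\overline\nu(0)/\sqrt{k}$, but it takes a genuinely different route from the paper. Where you telescope each $f(X_i)$ over dyadic truncations $Z_{i,2^t}$ and then invoke a finite-range ($2^{t+1}$-dependence) variance bound on the block sums $R_t$, the paper instead proves by induction on the dyadic scale $l$ that $\std\bigl(\sum_{i=h}^{h+k-1}f(X_i)\bigr)\le\sigma_l$ for every $k\le 2^l$. The induction step splits the block into two halves $V_1$ and $V_2$, replaces $V_2$ by an independent shifted copy $V_2'$ (exactly the translation-invariance you identify as the delicate step, formalised in the paper as Proposition~\ref{pr:shiftedPair}), and uses $\std(V_1+V_2)\le\std(V_1+V_2')+\std(V_2-V_2')$, which gives the recursion $\sigma_{l+1}=\sqrt2\,\sigma_l+\sum_{i=1}^{2^l}\sqrt{\nu(i)}$. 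Both arguments thus rest on the same two observations — that $f(X_i)$ is within $\sqrt{\nu(\ell)}$ in $L^2$ of a quantity depending only on the last $\min(\ell,i)$ innovations, by translating \eqref{eq:rhoDef} to negative shifts, and that the resulting dyadic series is controlled by $\overline\nu(0)$ — but organise the bookkeeping differently. Your approach has the virtue of being a direct decomposition with a transparent $m$-dependence step; the paper's recursion is tighter because the independence of the two halves buys a multiplicative $\sqrt2$ per doubling rather than the additive factor $2m$ coming from the finite-range covariance bound.

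As written your constant comes out around $14$ (the $2\sqrt{k\nu(0)}$ term contributes $2\overline\nu(0)\sqrt k$ and the dyadic sum contributes $4\cdot 3\,\overline\nu(0)\sqrt k$), and you acknowledge that reaching $5$ requires sharper accounting. It is worth flagging that the gap is not merely cosmetic: the factor $2m$ in the $m$-dependent variance bound and the doubling in $\|\Delta_{i,t}\|_2\le 2\sqrt{\nu(2^t)}$ are both structural losses of the telescoping method, and it is not obvious this route can be driven below the paper's $5$ without essentially reorganising it into the paper's half-splitting recursion. Since the lemma asserts the specific constant $5$, your argument as it stands proves a weaker statement (sufficient for all the asymptotic consequences drawn from it in the paper, but not the lemma verbatim); the rest of your reasoning — the identification $Z_{i,\ell}=f(X_{i,\ell-i})$ and the law-equality $(X_{i,\ell-i},X_i)\sim(X_{\ell,i-\ell},X_\ell)$ reducing everything to the $m\ge0$ case of \eqref{eq:rhoDef}, the disjointness of the innovation windows behind the $2^{t+1}$-dependence, and the comparison $\sum_t\sqrt{2^t\nu(2^t)}\le 3\,\overline\nu(0)$ — is all sound.
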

The mean square error bound \eqref{eq:thMSE} is proportional to \((\overline{\nu}(0))^{2}\). Proposition~\ref{pr:UpperBoundDelta} provides  bounds on \(\overline{\nu}(0)\). Under Assumption A2, the bound on \(\overline{\nu}(0)\) is inversely proportional to \(\sqrt{\xi}\). Under an additional decay assumption on \(\nu\), it is a polylogarithmic function of \(\xi\).
  
\begin{proposition}\label{pr:UpperBoundDelta}Suppose that Assumption A2 holds. Then 
\begin{equation}\label{eq:deltaBound}\overline{\nu}(0)\leq9\sqrt{\frac{ c}{\xi}}.\end{equation}
Moreover, if \(\nu(i)\leq c/(i+1)\) for \(i\geq0\) then\begin{equation}\label{eq:deltaBoundGeom}
\overline{\nu}(0)\leq14\sqrt{c}\ln\left(\frac{2}{\xi}\right).
\end{equation}
\end{proposition}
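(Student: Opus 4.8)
The plan is to estimate $\overline{\nu}(0)=\sum_{i=0}^{\infty}\sqrt{\nu(i)/(i+1)}$ directly from the definition, plugging in the explicit decay bounds on $\nu$: under A2 one has $\sqrt{\nu(i)/(i+1)}\le\sqrt{c}\,e^{-\xi i/2}/\sqrt{i+1}$, and under the extra hypothesis also $\sqrt{\nu(i)/(i+1)}\le\sqrt{c}/(i+1)$. In both parts the idea is to split the series at a suitable threshold, use the polynomial factor on the head and the exponential factor on the tail, and compare the resulting sums with elementary integrals and geometric series.

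For \eqref{eq:deltaBound} it suffices to show $\sum_{i=0}^{\infty}e^{-\xi i/2}/\sqrt{i+1}\le 9/\sqrt{\xi}$. I would split at $N:=\lceil 1/\xi\rceil$. On the head $i<N$, drop the exponential and bound $\sum_{j=1}^{N}1/\sqrt{j}\le 1+\int_{1}^{N}t^{-1/2}\,dt\le 2\sqrt{N}\le 2\sqrt{2/\xi}$, using $N\le 1/\xi+1\le 2/\xi$ (here $\xi\le1$). On the tail $i\ge N$ we have $1/\sqrt{i+1}\le\sqrt{\xi}$, so the tail is at most $\sqrt{\xi}\sum_{i\ge0}e^{-\xi i/2}=\sqrt{\xi}/(1-e^{-\xi/2})\le 3/\sqrt{\xi}$, where I use the elementary inequality $1-e^{-\xi/2}\ge \xi/3$ for $\xi\le1$. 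Adding gives a constant below $9$. (Equivalently, one could bound the whole sum by $1+\int_{0}^{\infty}e^{-\xi t/2}t^{-1/2}\,dt=1+\sqrt{2\pi/\xi}$ and then use $\xi\le1$.)

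For \eqref{eq:deltaBoundGeom}, now $\nu(i)\le\min(ce^{-\xi i},c/(i+1))$, and I would split at the larger threshold $T:=\lceil(2/\xi)\ln(2/\xi)\rceil$. On the head $i<T$ use the $1/(i+1)$ bound: $\sum_{i=0}^{T-1}1/(i+1)=\sum_{j=1}^{T}1/j\le 1+\ln T$. On the tail $i\ge T$ use the exponential bound; since $T\ge 1/\xi$ one has $1/\sqrt{i+1}\le\sqrt{\xi}$, and the choice of $T$ gives $e^{-\xi T/2}\le\xi/2$, so the tail is at most $\sqrt{\xi}\,e^{-\xi T/2}/(1-e^{-\xi/2})\le\sqrt{\xi}\,(\xi/2)/(\xi/3)=\tfrac32\sqrt{\xi}\le\tfrac32$. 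It then remains to bound $1+\ln T$ by a constant multiple of $\ln(2/\xi)$: from $T\le(2/\xi)\ln(2/\xi)+1\le (C_{0}/\xi)\ln(2/\xi)$ (the $+1$ is absorbed using $\xi\le1$ and $\ln(2/\xi)\ge\ln2$), we get $\ln T\le\ln C_{0}+\ln\ln(2/\xi)+\ln(1/\xi)$, and each term is at most a fixed multiple of $\ln(2/\xi)$ since $\ln(2/\xi)\ge\ln2>0$, $\ln(1/\xi)\le\ln(2/\xi)$, and $\ln\ln(2/\xi)\le\ln(2/\xi)$. Collecting the constants yields a bound below $14\sqrt{c}\,\ln(2/\xi)$.

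The only delicate point I anticipate is the bookkeeping of constants in the second part, namely checking that $1+\ln T$ and in particular the $\ln\ln(2/\xi)$ term stay below a fixed multiple of $\ln(2/\xi)$ uniformly over $\xi\in(0,1]$ — the tight regime being $\xi$ near $1$, where $\ln(2/\xi)$ is as small as $\ln2$. Everything else is a routine split-and-compare estimate, and the generous constants $9$ and $14$ leave ample slack.
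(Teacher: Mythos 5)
Your argument is correct and all the numerical slack you invoke checks out: in part one the split at $N=\lceil 1/\xi\rceil$ together with $\sum_{j\le N}j^{-1/2}\le 2\sqrt{N}\le 2\sqrt{2/\xi}$ and $1-e^{-\xi/2}\ge\xi/3$ for $\xi\le 1$ yields a coefficient near $2\sqrt 2+3\approx 5.8$, comfortably below $9$; in part two the split at $T=\lceil(2/\xi)\ln(2/\xi)\rceil$ gives a head $\le 1+\ln T\le 4\ln(2/\xi)+1$ and a tail $\le\tfrac32\sqrt{\xi}$, which after dividing by $\ln(2/\xi)\ge\ln 2$ lands well under $14$. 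The inequality $1-e^{-\xi/2}\ge\xi/3$ on $(0,1]$ is true (equality fails strictly, with the closest approach at $\xi=1$), and the $\ln\ln(2/\xi)$ term you worry about is dominated by $\ln(2/\xi)$ since $\ln x<x$ for all $x>0$; the small-$L$ regime $L=\ln 2$ costs at most a factor $1/\ln 2$ when absorbing additive constants, which you have budgeted for.

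Your route differs from the paper's. The paper does not treat Proposition~\ref{pr:UpperBoundDelta} directly: it first proves two more general lemmas, Propositions~\ref{pr:exponentialDecay} and~\ref{pr:exponentialGeomDecay}, which bound $\overline\omega(0)$ (and, in the first case, also $\overline\omega(j)$) for sequences with an extra polylogarithmic weight $\ln^\delta(i+2)$ in the decay bound, and then specializes to $\delta=0$. Concretely, Proposition~\ref{pr:exponentialDecay} replaces the sum by an integral, performs the substitution $y=\sqrt{\xi x}$ to reduce to a Gaussian-type integral, and separates the $y^\delta$ and $\ln^{\delta/2}(2/\xi)$ contributions, giving $\overline\nu(0)\le 2\sqrt{2\pi ec/\xi}$ at $\delta=0$; Proposition~\ref{pr:exponentialGeomDecay} splits the sum at $j=\lceil(\delta^2+2)\xi^{-2}\rceil$ (a threshold of order $\xi^{-2}$, larger than your $\xi^{-1}\ln(2/\xi)$) so that the tail, estimated via the exponential bound from the previous lemma, is an absolute constant, and the head contributes $O(\ln^{\delta/2+1}(1/\xi))$. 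The paper pays this extra generality because it reuses the $\delta>0$ versions verbatim in the proof of Theorem~\ref{th:ObliviousPolynomialZeta}, where a weight $\theta(\log_2(4i+1))$ of this form appears. Your version is more elementary (no Gamma function, no Gaussian integral), uses a tighter split point for the logarithmic bound, and is self-contained for Proposition~\ref{pr:UpperBoundDelta} itself; the trade-off is that it does not produce the weighted lemmas that the paper needs later.
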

\subsection{Sharpness of bounds}\label{sub:Sharp}
This subsection gives an example proving  the optimality of   \eqref{eq:thMSE},  \eqref{eq:stdSumBound}  and \eqref{eq:deltaBound}, up to a multiplicative constant.
Consider the real-valued autoregressive sequence   \((X_{i},i\geq0)\) given by the recursion \begin{equation*}
X_{i+1}=\sqrt{\eta}X_{i}+U_{i},
\end{equation*}    for \( i\geq 0\), with \(X_{0}=0\), where \(\eta\in[0,1)\) and    \(U_{i},i\geq0\),   are real-valued i.i.d. with \(E(U_{i})=0\) and \(\var(U_{i})=1\). In this example, \(F=F'=\mathbb{R}\) and \(g(x,u)=\sqrt{\eta}x+u\).
Assume that \(f\) is the identity function on \(\mathbb{R}\) and  that \(b(k)=0\). It is easy to verify by induction that \(E(X_{i})=0\) and \(\var(X_{i})\leq1/(1-\eta)\) for \(i\geq0\).  The conditions in Proposition~\ref{pr:contractive1} hold for the Euclidean distance \(\rho(x,x')=|x-x'|\) for \((x,x')\in\mathbb{R}^{2}\), with \(\kappa=\gamma=1\) and \(\kappa'=1/(1-\eta)\). Thus,  Assumption A1 holds with \(\nu(i)=\eta^{ i}/(1-\eta)\)  for \(i\geq0\), and Assumption A2 holds with  \(c=1/(1-\eta)\) and \(\xi=1-\eta\). Applying  \eqref{eq:thMSE} with \(h=0\)  and noting that \(\mu=0\) shows, in combination with \eqref{eq:deltaBound}, that\begin{displaymath}
 \var(f_{k})\leq  \frac{2106}{k(1-\eta)^{2}}.
\end{displaymath}
On the other hand, it can be shown by induction that, for \(i\geq0\), \begin{displaymath}
X_{i}=\sum^{i-1}_{j=0}(\sqrt{\eta})^{i-1-j}U_{j},
\end{displaymath}
and, for \(k\geq0\),\begin{displaymath}
\sum_{i=0}^{k}X_{i}=\sum^{k}_{j=0}\frac{1-(\sqrt{\eta})^{k-j}}{1-\sqrt{\eta}}U_{j}.
\end{displaymath}  
Consequently, \begin{displaymath}
\var(\sum_{i=0}^{k}X_{i})=\sum^{k}_{j=0}{\alpha _{j}}^{2},
\end{displaymath} where  \(\alpha _{j}:=(1-(\sqrt{\eta})^{j})/(1-\sqrt{\eta})\). By standard calculations,  \(2\alpha _{j}\ge1/(1-\eta)\) for \(j\geq j_{0}\), where \(j_{0}:=\lceil2/\log_{2}(1/\eta)\rceil\). Thus, for  \(k\geq2j_{0}\), we have \begin{displaymath}
\var(f_{k})\geq\frac{1}{8k(1-\eta)^{2}}.
\end{displaymath}
This implies that  \eqref{eq:thMSE} as well as \eqref{eq:deltaBound} are tight, up to an absolute multiplicative constant. The same calculations show that \eqref{eq:stdSumBound} is tight, as well.
\section{Unbiased time-average estimators}\label{se:UnbiasedEstimators}
Subsection \ref{sub:singleTermEstimator} recalls the \emph{single term estimator},  a RMLMC estimator      introduced by \citet{GlynnRhee2015unbiased}. Subsection \ref{sub:unbiasedLongRun}  uses this estimator to construct an unbiased time-average estimator \(\hat f_{k}\). Subsection~\ref{sub:Sdependent} shows how to choose the parameters used to construct    \(\hat f_{k}\) in order to ensure that   \(\hat f_{k}\) has good convergence properties. Some of these parameters are calculated in terms  of  \(\bar\nu\), though.
Under additional assumptions, Subsection~\ref{sub:Oblivious} provides choices for these parameters without explicit knowledge of  \(\bar\nu\). Subsection \ref{sub:strat} describes a stratified version of   \(\hat f_{k}\). Subsection~\ref{sub:implementationDetails} gives implementation details.      
\subsection{The single term estimator}\label{sub:singleTermEstimator} Let   \((Y_{l}, l\geq0)\) be a sequence of   square-integrable random variables such that \(E(Y_{l})\) has a limit \(\mu_{Y}\) as \(l\) goes to infinity.    Consider a probability distribution \((p_{l},l\geq0)\)  such that  \(p_{l}>0\) for \(l\geq0\). Let   \(N\in\mathbb{N}\)  be an integral  random variable independent of  \((Y_{l}, l\geq0)\)  such that \(\Pr(N=l)=p_{l}\)   for \(l\geq0\). Theorem~\ref{th:Glynn}, due to \citet{GlynnRhee2015unbiased}  (see also~\cite[Theorem 2]{Vihola2018}),  describes the single term estimator \(Z\) and shows that, under suitable conditions, it has expectation equal to  \(\mu_{Y}\).  
\begin{theorem}[\cite{GlynnRhee2015unbiased}]\label{th:Glynn}    Set \(Z:=(Y_{N}-Y_{N-1})/p_{N}\), with  \(Y_{-1}:=0\). If \(\sum^{\infty}_{l=0}E((Y_{l}-Y_{l-1})^{2})/p_{l}\) is finite  then \(Z\) is square-integrable, \(E(Z)=\mu_{Y}\), and
\begin{equation*}
E(Z^{2})=\sum^{\infty}_{l=0}\frac{E((Y_{l}-Y_{l-1})^{2})}{p_{l}}.
\end{equation*}
\end{theorem}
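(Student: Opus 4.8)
The plan is to pass to the increments $\Delta_l := Y_l - Y_{l-1}$ for $l\ge 0$ (recall $Y_{-1}:=0$), so that $Z = \Delta_N/p_N$ and the partial sums telescope: $\sum_{l=0}^{L} E(\Delta_l) = E(Y_L) - E(Y_{-1}) = E(Y_L)$, which converges to $\mu_Y$ by hypothesis. Two computations then need to be carried out, one for $E(Z^2)$ and one for $E(Z)$, both relying on the independence of $N$ and $(Y_l,l\ge0)$.

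First I would compute $E(Z^2)$. Writing $Z^2 = \sum_{l\ge 0}\mathbf{1}_{\{N=l\}}\Delta_l^2/p_l^2$, which is a sum of non-negative terms, Tonelli's theorem permits exchanging sum and expectation without any hypothesis; independence gives $E(\mathbf{1}_{\{N=l\}}\Delta_l^2) = \Pr(N=l)\,E(\Delta_l^2) = p_l\,E(\Delta_l^2)$, hence $E(Z^2) = \sum_{l\ge0} E(\Delta_l^2)/p_l$. By assumption this series is finite, so $Z$ is square-integrable, and in particular integrable, and the claimed formula for $E(Z^2)$ is already established.

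Next I would compute $E(Z)$. Here the only genuine point is to justify the interchange $E\bigl(\sum_{l\ge0}\mathbf{1}_{\{N=l\}}\Delta_l/p_l\bigr) = \sum_{l\ge0} E\bigl(\mathbf{1}_{\{N=l\}}\Delta_l/p_l\bigr)$, for which I would verify absolute summability: $\sum_{l\ge0} E(|\mathbf{1}_{\{N=l\}}\Delta_l|/p_l) = \sum_{l\ge0} E(|\Delta_l|) \le \sum_{l\ge0}\sqrt{E(\Delta_l^2)} = \sum_{l\ge0}\sqrt{p_l}\,\sqrt{E(\Delta_l^2)/p_l}$, and Cauchy--Schwarz together with $\sum_{l\ge0} p_l = 1$ bounds this last sum by $\bigl(\sum_{l\ge0} E(\Delta_l^2)/p_l\bigr)^{1/2} < \infty$. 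Once the interchange is licensed, independence yields $E(Z) = \sum_{l\ge0} E(\Delta_l)$, and the telescoping identity above gives $E(Z) = \lim_{L\to\infty} E(Y_L) = \mu_Y$.

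The main obstacle is the mild one of justifying that last interchange of summation and expectation; this is precisely the step where the finiteness of $\sum_{l\ge0} E((Y_l-Y_{l-1})^2)/p_l$ and the fact that $(p_l,l\ge0)$ is a genuine probability distribution are used in tandem (through Cauchy--Schwarz). Everything else reduces to Tonelli/Fubini and a telescoping sum, and the second moment formula falls out of the $E(Z^2)$ computation at no extra cost.
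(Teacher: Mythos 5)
Your proof is correct. The paper itself does not reproduce a proof of this theorem --- it cites Glynn and Rhee (2015) and Vihola (2018, Theorem~2) --- and your argument (decomposing $Z$ on the events $\{N=l\}$, Tonelli for the second moment, then Jensen and Cauchy--Schwarz against $\sum_{l} p_l = 1$ to justify the Fubini interchange for the first moment and telescope the increments $Y_l - Y_{l-1}$ back to $\lim_{L\to\infty} E(Y_L) = \mu_Y$) is the standard one found in those references.
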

\subsection{Construction  of    \(\hat f_{k}\)  }\label{sub:unbiasedLongRun}
This subsection supposes that Assumption A1 holds and constructs     \(\hat f_{k}\) along the following steps:\begin{enumerate}
\item 
Build a random sequence \((f_{k,l},l\geq0)\) such that \(E(f_{k,l})\rightarrow\mu\) as \(l\) goes to infinity and  \(f_{k,0}\) is a standard time-average estimator with burn-in period   \(b'(k)\in[b(k),k/2]\).
\item Use the sequence  \((f_{k,l},l\geq0)\) to construct a RMLMC estimator \(Z_{k}\) with \(E(Z_{k})=\mu-E(f_{k,0})\).
\item Combine \(f_{k,0}\) and  \(Z_{k}\)  to produce  \(\hat f_{k}\).
\end{enumerate}

First, we detail Step 1. For  \(k\ge1\), let \(b'(k)\) be a burn-in period with \(b(k)\le b'(k)\le k/2\).   Different choices for \(b'(k)\) will be studied in Subsections~\ref{sub:Sdependent} and~\ref{sub:Oblivious}. For  \(k\ge1\) and \(l\ge0\), let \begin{equation}\label{eq:fklDef}
f_{k,l}:=\frac{1}{k-b'(k)}\sum ^{k-1}_{i=b'(k)}f(X_{i,k(2^{l}-1)}).
\end{equation}
In particular,
\begin{equation}\label{eq:fk0def}
f_{k,0}=\frac{1}{k-b'(k)}\sum ^{k-1}_{i=b'(k)}f(X_{i}).
\end{equation}As \(X_{i,k(2^{l}-1)}\sim X_{i+k(2^{l}-1)}\), Theorem~\ref{th:biasedLongRun} implies that \(E(f_{k,l})\rightarrow\mu\) as \(l\) goes to infinity. By \eqref{XimDefinition}, for \(0\leq l<l'\) and \(i\geq0\), the last \(i+m\) copies of \(U_{0}\) used to calculate  \(f_{k,l}\) and  \(f_{k,l'}\) are the same, where \(m=k(2^{l}-1)\). Thus, intuitively speaking,  \(f_{k,l'}\) should be close to     \(f_{k,l}\)  for large values of \(l\), and increasing  \(b'(k)\) should make  \(f_{k,l'}\) closer to     \(f_{k,l}\) even for small \(l\). 
For simplicity, it is assumed that the expected time to simulate \(f_{k,l}\) is equal to  \(k2^{l}\). This assumption is justified by the fact that \(f_{k,l}\) is calculated by generating \(U_{-m},\dots,U_{k-2}\),  
and using \eqref{eq:recXim} to calculate \(X_{-m,m},\dots,X_{k-1,m}\). Lemma~\ref{le:boundVarfk0} gives an upper bound on the  variance of \(f_{k,0}\) in terms of that of \(f_{k}\).  \begin{lemma}\label{le:boundVarfk0}For  \(k\ge1\), 
\begin{equation*}
\var(f_{k,0})
\le\frac{796(\overline{\nu}(0))^{2}}{k^{3/2}}\sqrt{b'(k)-b(k)}+\var(f_{k}).\end{equation*}
\end{lemma}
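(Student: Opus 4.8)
The plan is to bound $\var(f_{k,0})$ by comparing $f_{k,0}$ directly to $f_k$ and controlling the difference. Write $f_{k,0}-f_k = \left(\frac{1}{k-b'(k)}\sum_{i=b'(k)}^{k-1}f(X_i)\right) - \left(\frac{1}{k-b(k)}\sum_{i=b(k)}^{k-1}f(X_i)\right)$. Since both are averages over suffixes of the same realization $\{f(X_i)\}$, their difference is a linear combination of the $f(X_i)$ with coefficients that sum to zero; more precisely, $f_{k,0}-f_k$ can be rewritten (e.g.\ using the identity $\frac{1}{a}\sum_{i=c}^{k-1} - \frac{1}{b}\sum_{i=d}^{k-1}$ with $a=k-b'(k)$, $b=k-b(k)$) as a difference of two time-averages of comparable length scaled by $O((b'(k)-b(k))/k)$, plus a short average over the $b(k)\le i<b'(k)$ block. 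I would then use $\var(f_{k,0}) \le \var(f_k) + 2\,\std(f_k)\,\std(f_{k,0}-f_k) + \var(f_{k,0}-f_k)$, or more cleanly the bound $\std(f_{k,0})\le \std(f_k)+\std(f_{k,0}-f_k)$ and then square, so the whole task reduces to bounding $\std(f_{k,0}-f_k)$.

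The key estimate is therefore an upper bound on $\std(f_{k,0}-f_k)$ of order $(\overline{\nu}(0))\,k^{-3/4}(b'(k)-b(k))^{1/2}$, since squaring that and doubling the cross term against $\std(f_k)=O(\overline\nu(0)/\sqrt k)$ produces the stated $(\overline\nu(0))^2 k^{-3/2}\sqrt{b'(k)-b(k)}$ term. To get it, I would express $f_{k,0}-f_k$ explicitly. Let $\beta=b(k)$, $\beta'=b'(k)$, $n=k-\beta$, $n'=k-\beta'$, so $n'\le n$ and $n-n'=\beta'-\beta$. Then
\begin{equation*}
f_{k,0}-f_k=\frac{1}{n'}\sum_{i=\beta'}^{k-1}f(X_i)-\frac{1}{n}\sum_{i=\beta}^{k-1}f(X_i)=\left(\frac1{n'}-\frac1n\right)\sum_{i=\beta'}^{k-1}f(X_i)-\frac1n\sum_{i=\beta}^{\beta'-1}f(X_i).
\end{equation*}
Now $\frac1{n'}-\frac1n=\frac{n-n'}{nn'}=\frac{\beta'-\beta}{nn'}$, and with the standing assumption $b'(k)\le k/2$ we have $n',n\ge k/2$, so this coefficient is at most $4(\beta'-\beta)/k^2$. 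Hence $f_{k,0}-f_k$ is the sum of two terms: $\frac{4(\beta'-\beta)}{k^2}$ times (a window-sum of length $n'$) and $\frac2k$ times (a window-sum of length $\beta'-\beta$). Apply Lemma~\ref{le:StdSum} to each: $\std$ of the normalized length-$n'$ average is $\le 5\overline\nu(0)/\sqrt{n'}$, so the first term contributes $\le \frac{4(\beta'-\beta)}{k^2}\cdot n'\cdot\frac{5\overline\nu(0)}{\sqrt{n'}}=\frac{20\overline\nu(0)(\beta'-\beta)}{k^2}\sqrt{n'}\le \frac{20\overline\nu(0)(\beta'-\beta)}{k^{3/2}}$; and the second term contributes $\le \frac2k\cdot(\beta'-\beta)\cdot\frac{5\overline\nu(0)}{\sqrt{\beta'-\beta}}=\frac{10\overline\nu(0)\sqrt{\beta'-\beta}}{k}$. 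Since $\beta'-\beta\le k$, the first of these is itself $\le \frac{20\overline\nu(0)\sqrt{\beta'-\beta}}{k}$, so altogether $\std(f_{k,0}-f_k)\le \frac{C\overline\nu(0)\sqrt{\beta'-\beta}}{k}$ for an explicit constant $C$ (around $30$).

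Finally combine: $\var(f_{k,0})\le\var(f_k)+2\std(f_k)\std(f_{k,0}-f_k)+\var(f_{k,0}-f_k)$. Using $\std(f_k)\le 5\overline\nu(0)/\sqrt{k}$ (Lemma~\ref{le:StdSum} with $h=b(k)$, noting $k-b(k)\ge k/2$, so actually $\le 5\sqrt2\,\overline\nu(0)/\sqrt{k}$ — one must be slightly careful about which normalization Lemma~\ref{le:StdSum} gives, but the loss is a harmless constant factor), the cross term is $O\big((\overline\nu(0))^2 k^{-3/2}\sqrt{\beta'-\beta}\big)$ and the last term is $O\big((\overline\nu(0))^2 k^{-2}(\beta'-\beta)\big)$, which is dominated by the cross term since $\beta'-\beta\le k$. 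Tracking the constants and rounding up gives the coefficient $796$. \textbf{The main obstacle} is purely bookkeeping: getting the constants to close under the value $796$ while being honest about the $k/2$-type denominators (the assumption $b(k),b'(k)\le k/2$ is exactly what makes all the $1/n$, $1/n'$ factors comparable to $1/k$), and making sure Lemma~\ref{le:StdSum} is applied with the right window lengths and start indices. There is no conceptual difficulty beyond the decomposition above; the triangle inequality in $L^2$ plus one application of Lemma~\ref{le:StdSum} per piece does all the work.
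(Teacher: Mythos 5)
Your proposal is correct and follows essentially the same route as the paper: the paper also decomposes $f_{k,0}-f_k$ into the term $\frac{b'(k)-b(k)}{(k-b(k))(k-b'(k))}\sum_{i=b'(k)}^{k-1}f(X_i)$ plus the short-block term $\frac{1}{k-b(k)}\sum_{i=b(k)}^{b'(k)-1}f(X_i)$, bounds each piece via sub-linearity of the standard deviation and Lemma~\ref{le:StdSum} to obtain $\std(f_{k,0}-f_k)\le 25\overline{\nu}(0)\sqrt{b'(k)-b(k)}/k$, and then expands $\var(f_{k,0})\le(\std(f_{k,0}-f_k)+\std(f_k))^2$ using $\std(f_k)\le 5\sqrt2\,\overline\nu(0)/\sqrt k$ and $b'(k)-b(k)\le\sqrt{k(b'(k)-b(k))/2}$ to land on the constant $796$. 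The only cosmetic difference is your slightly looser intermediate constant (you use $\beta'-\beta\le k$ where the paper uses $\beta'-\beta\le k/2$), which is harmless since you only claim the ballpark.
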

Next, we describe Step 2. Let \((p_{l},l\geq0)\) be a probability distribution on \(\mathbb{N}\) with \(p_{l}>0\) for \(l\ge0\).  For \(k\ge1\), let\begin{equation}\label{eq:ZkDef}
Z_{k}^{(b'(k))}:=\frac{f_{k,N+1}-f_{k,N}}{p_{N}},
\end{equation} where  \(N\in\mathbb{N}\)  is an integer-valued  random variable independent of  \((U_{i}, i\in\mathbb{Z})\)  such that \(\Pr(N=l)=p_{l}\)  for \(l\geq0\). For simplicity, we will often denote \(Z_{k}^{(b'(k))}\) by \(Z_{k}\). Let \(T_{k}\) be the expected time required to simulate \(Z_{k}\).   Lemma~\ref{le:generalPl} provides  bounds on \(T_{k}\) and on the second moment of   \(Z_{k}\)   and shows that, under certain conditions, \(Z_{k}\) is an unbiased estimator for the negated bias \(\mu-E(f_{k,0})\).
 
\begin{lemma}\label{le:generalPl} For \(k\ge1\), we have \(T_{k}\le3k\sum^{\infty}_{l=0}2^{l}p_{l}\), and
\begin{equation}
\label{eq:ZNSecondMomentGeneralCase}
kE(Z_{k}^{2})\le2(\overline{\nu}(\lfloor{b'(k)}/{2}\rfloor))^{2}(\frac{1}{p_{0}}+\frac{1}{p_{1}})+\sum^{\infty}_{l=2}\frac{2^{3-l}(\overline{\nu}(k2^{l-2})-\overline{\nu}(k2^{l-1}))^{2}}{p_{l}}.
\end{equation}
If the right-hand side of \eqref{eq:ZNSecondMomentGeneralCase} is finite, then \(Z_{k}\) is square-integrable and  \(E(f_{k,0}+Z_{k})=\mu\).      
\end{lemma}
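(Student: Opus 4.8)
The plan is to handle the running-time bound directly, reduce the unbiasedness and the second-moment estimate to Theorem~\ref{th:Glynn}, and then prove a term-by-term bound on $E((f_{k,l+1}-f_{k,l})^{2})$. For the running time, simulating $Z_{k}$ means drawing $N$, generating $U_{-m},\dots,U_{k-2}$ with $m=k(2^{N+1}-1)$, and evaluating $f_{k,N}$ and $f_{k,N+1}$ via \eqref{eq:recXim}; under the stated conventions this costs at most $k2^{N}+k2^{N+1}=3k2^{N}$ units plus lower-order overhead, so $T_{k}\le3k\sum_{l\ge0}2^{l}p_{l}$. For the rest, I would apply Theorem~\ref{th:Glynn} with $Y_{l}:=f_{k,l+1}-f_{k,0}$ for $l\ge0$ and $Y_{-1}:=0$. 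Each $f_{k,l}$ is square-integrable because $X_{i,k(2^{l}-1)}\sim X_{i+k(2^{l}-1)}$ and $f(X_{j})$ is square-integrable for all $j$, and Theorem~\ref{th:biasedLongRun} gives $E(Y_{l})\to\mu-E(f_{k,0})$. Since $Y_{0}-Y_{-1}=f_{k,1}-f_{k,0}$ and $Y_{l}-Y_{l-1}=f_{k,l+1}-f_{k,l}$ for $l\ge1$, the single term estimator $(Y_{N}-Y_{N-1})/p_{N}$ of Theorem~\ref{th:Glynn} is precisely $Z_{k}$. Hence, as soon as $\sum_{l\ge0}E((Y_{l}-Y_{l-1})^{2})/p_{l}<\infty$, Theorem~\ref{th:Glynn} yields $Z_{k}\in L^{2}$, $E(f_{k,0}+Z_{k})=\mu$, and $kE(Z_{k}^{2})=kE((f_{k,1}-f_{k,0})^{2})/p_{0}+\sum_{l\ge1}kE((f_{k,l+1}-f_{k,l})^{2})/p_{l}$. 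Thus \eqref{eq:ZNSecondMomentGeneralCase} reduces to proving (i) $kE((f_{k,l+1}-f_{k,l})^{2})\le2(\overline{\nu}(\lfloor b'(k)/2\rfloor))^{2}$ for $l\in\{0,1\}$, and (ii) $kE((f_{k,l+1}-f_{k,l})^{2})\le2^{3-l}(\overline{\nu}(k2^{l-2})-\overline{\nu}(k2^{l-1}))^{2}$ for $l\ge2$.

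The key device for both (i) and (ii) is a reindexing identity. Fix $l\ge0$, set $m:=k(2^{l}-1)$, so that $f_{k,l+1}-f_{k,l}=(k-b'(k))^{-1}\sum_{i=b'(k)}^{k-1}(f(X_{i,2m+k})-f(X_{i,m}))$. By \eqref{XimDefinition}, $X_{i,m}=G_{i+m}(X_{0};U_{-m},\dots,U_{i-1})$ and $X_{i,2m+k}=G_{i+2m+k}(X_{0};U_{-(2m+k)},\dots,U_{i-1})$, so shifting every noise index by $+m$ (legitimate since the $U_{j}$ are i.i.d.) transforms $(X_{i,m},X_{i,2m+k})_{b'(k)\le i\le k-1}$ into $(X_{j},X_{j,n})_{h\le j\le h+K-1}$, where $n:=k2^{l}$, $K:=k-b'(k)$ and $h:=b'(k)+k(2^{l}-1)$; note $h+K=k2^{l}$ and, for $l\ge1$, $h\ge k(2^{l}-1)\ge k2^{l-1}$. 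Consequently, with $W_{j}:=f(X_{j,n})-f(X_{j})$, which satisfies $E(W_{j}^{2})\le\nu(j)$ by Assumption~A1,
\[
E((f_{k,l+1}-f_{k,l})^{2})=E\Bigl(\bigl(K^{-1}\sum\nolimits_{j=h}^{h+K-1}W_{j}\bigr)^{2}\Bigr).
\]

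For $l\ge2$ the crude $L^{2}$-triangle bound already suffices: $\|K^{-1}\sum_{j=h}^{h+K-1}W_{j}\|_{2}\le K^{-1}\sum_{j=h}^{h+K-1}\sqrt{\nu(j)}\le\sqrt{\nu(h)}\le\sqrt{\nu(k2^{l-1})}$ because $\nu$ is decreasing and $h\ge k2^{l-1}$. It then remains to check the purely elementary inequality $\nu(k2^{l-1})\le2^{3-l}k^{-1}(\overline{\nu}(k2^{l-2})-\overline{\nu}(k2^{l-1}))^{2}$: the difference $\overline{\nu}(k2^{l-2})-\overline{\nu}(k2^{l-1})$ is a sum of $k2^{l-2}$ terms $\sqrt{\nu(i)/(i+1)}$ over $k2^{l-2}\le i<k2^{l-1}$, each at least $\sqrt{\nu(k2^{l-1})/(k2^{l-1})}$, so its square is at least $k2^{l-3}\nu(k2^{l-1})$, and multiplying by $2^{3-l}/k$ gives exactly $\nu(k2^{l-1})$.

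For $l\in\{0,1\}$ the triangle bound is too lossy and one must exploit cancellation; this is the main obstacle. Write $E((f_{k,l+1}-f_{k,l})^{2})=(E(K^{-1}\sum_{j=h}^{h+K-1}W_{j}))^{2}+\var(K^{-1}\sum_{j=h}^{h+K-1}W_{j})$. The mean is handled by \eqref{eq:thLongRunBias} applied to the two time-averages $K^{-1}\sum_{j=h}^{h+K-1}f(X_{j})$ and $K^{-1}\sum_{j=h}^{h+K-1}f(X_{j,n})$ — the latter having mean $E(K^{-1}\sum_{i=h+n}^{h+n+K-1}f(X_{i}))$ since $X_{j,n}\sim X_{j+n}$ — which gives $|E(K^{-1}\sum_{j=h}^{h+K-1}W_{j})|\le2\overline{\nu}(\lfloor h/2\rfloor)/\sqrt{K}$. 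For the variance, I would run the martingale/telescoping argument underlying Lemma~\ref{le:StdSum} on the difference process $(W_{j})$ — equivalently, on the joint Markov chain $(X_{j,n},X_{j})$, both coordinates driven by the common $U_{j}$, with observable $(x,y)\mapsto f(x)-f(y)$, which inherits from A1 the "mostly-recent-noise" dependence — to obtain $\std(K^{-1}\sum_{j=h}^{h+K-1}W_{j})\le C\,\overline{\nu}(\lfloor h/2\rfloor)/\sqrt{K}$ with an explicit constant. Combining the two and using $K\ge k/2$ yields $E((f_{k,l+1}-f_{k,l})^{2})\le C'(\overline{\nu}(\lfloor h/2\rfloor))^{2}/k$; since $h=b'(k)$ for $l=0$ and $h=b'(k)+k\ge k\ge b'(k)$ for $l=1$ while $b'(k)\le k/2$, monotonicity of $\overline{\nu}$ converts this into the bound with $\overline{\nu}(\lfloor b'(k)/2\rfloor)$ in (i), the sharp constant in the variance estimate being what pins the factor $2$. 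The hardest point is exactly this variance bound for the difference process with the burn-in-aware $\overline{\nu}(\lfloor h/2\rfloor)$ and the attendant constant bookkeeping; once (i) and (ii) hold, summing against $p_{l}$, multiplying by $k$, and invoking the finiteness hypothesis to close the argument via Theorem~\ref{th:Glynn} is routine.
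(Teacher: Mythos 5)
You correctly reduce the problem to Theorem~\ref{th:Glynn} with $Y_{l}=f_{k,l+1}-f_{k,0}$, and the reindexing via Proposition~\ref{pr:shiftedPair} to reach $W_{j}=f(X_{j,n})-f(X_{j})$ with $E(W_{j}^{2})\le\nu(j)$ is exactly what the paper does. Your treatment of $l\ge2$ and the running-time bound are fine. The genuine gap is in the case $l\in\{0,1\}$, which you single out as the hard part and then leave as a sketch. You are right that the $L^{2}$-triangle inequality (Minkowski) followed by the crude bound $K^{-1}\sum_{j}\sqrt{\nu(j)}\le\sqrt{\nu(h)}$ is lossy there, but your conclusion --- that one must pass through a bias-variance split of $K^{-1}\sum_{j}W_{j}$ and re-run a Lemma~\ref{le:StdSum}-style telescoping argument on the difference process --- is neither necessary nor sufficient as stated. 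It is not necessary because Jensen (equivalently Cauchy-Schwarz), $\bigl(K^{-1}\sum_{j=h}^{h+K-1}W_{j}\bigr)^{2}\le K^{-1}\sum_{j=h}^{h+K-1}W_{j}^{2}$, gives $E\bigl((f_{k,l+1}-f_{k,l})^{2}\bigr)\le K^{-1}\sum_{j=h}^{h+K-1}\nu(j)$ (the \emph{average} of the $\nu(j)$, not the maximum), and since $h\ge2\lfloor b'(k)/2\rfloor$ and $h+K-1=k2^{l}-1$, Proposition~\ref{pr:2kk'} bounds the sum by $(\overline{\nu}(\lfloor b'(k)/2\rfloor))^{2}$; dividing by $K\ge k/2$ immediately yields the claimed $2(\overline{\nu}(\lfloor b'(k)/2\rfloor))^{2}/k$ for \emph{every} $l\ge0$, so there is no need to distinguish small $l$ at all. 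It is not sufficient because the constants you would inherit from \eqref{eq:thLongRunBias} (giving $|E(K^{-1}\sum W_{j})|\le2\overline{\nu}(\lfloor h/2\rfloor)/\sqrt{K}$) and from a Lemma~\ref{le:StdSum}-type bound (constant $5$, giving $(\std)^{2}\le25\overline{\nu}^{2}/K$) combine to roughly $29\overline{\nu}^{2}/K\approx58\overline{\nu}^{2}/k$, far above the factor $2$ you assert is ``pinned.'' Moreover, whether Lemma~\ref{le:StdSum} transfers verbatim to the coupled pair process would itself require verification of the A1 hypothesis for the observable $(x,y)\mapsto f(x)-f(y)$, which you do not attempt. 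In short, the case $l\in\{0,1\}$ needs no new idea once you replace Minkowski by Jensen/Cauchy-Schwarz and invoke Proposition~\ref{pr:2kk'}; the route you propose instead is both harder and quantitatively inadequate.
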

We now detail Step 3. Given \(q\in(0,1]\), let \(Z'_{k}\) be a copy of \(Z_{k}\) independent of \(f_{k,0}\) and let \(Q\)  be a binary variable independent of \((f_{k,0},Z'_{k})\) such that \(\Pr(Q=1)=q\).  Set
\begin{equation}\label{eq:defHatfk}
\hat f_{k}:=f_{k,0}+q^{-1}QZ'_{k}.
\end{equation} In other words, \(\hat f_{k}\) is constructed by sampling \(f_{k,0}\) once and sampling a copy of \(Z_{k}\) with frequency \(q\).  By Lemma~\ref{le:generalPl}, if the right-hand side of \eqref{eq:ZNSecondMomentGeneralCase} is finite, then  \(E(\hat f_{k})=E(f_{k,0})+E(Z_{k})=\mu\), and \(\hat f_{k}\) is an unbiased estimator of \(\mu\). When \(q=1\), copies of  \(f_{k,0}\) and  of \(Z_{k}\) are sampled with the same frequency.  When \(q<1\),  \(Z_{k}\) is sampled less often than \(f_{k,0}\), which can improve the efficiency of \(\hat f_{k}\),   in the same spirit as the Multilevel Monte Carlo Method (MLMC)~\cite{Giles2008} and the randomized dimension reduction algorithm~\cite{kahaRandomizedDimensionReduction20}.   Selecting the \(p_{l}\)'s and \(q\) is studied in Subsections~\ref{sub:Sdependent} and~\ref{sub:Oblivious}. Let  \(\hat T_{k}\) be the expected time to simulate \(\hat f_{k}\). As the expected time to simulate \(f_{k,0}\) is equal to \(k\), we have \(\hat T_{k}=k+qT_{k}\). Note that the estimator \(Z_{k}\) is interesting by itself as it provides an unbiased estimator for the bias of \(f_{k}\) if we set \(b'(k)=b(k)\). We now state the following assumption:
\begin{description}
\item[Assumption B.] There is a positive real number \(w_{0}\) such that  \(k\var(f_{k})\ge w_{0}\) for sufficiently large \(k\).  \end{description}
When \(b(k)=0\), Assumption B can be shown under certain correlation hypotheses  \cite[p. 99]{asmussenGlynn2007}.
\subsection{\(\nu\)-dependent parameters}
\label{sub:Sdependent}
This subsection gives a construction of   \((p_{l},l\geq0)\)   and of \(q\) in terms of \(\bar\nu\). For \(l\geq2\), set  
\begin{equation}\label{eq:plDefBeta}
p_{l}=\frac{\overline{\nu}(k2^{l-2})-\overline{\nu}(k2^{l-1})}{2^{l}\overline{\nu}(k)},
\end{equation} and
\begin{equation}\label{eq:pldefl01}
p_{1}=(1-\sum^{\infty}_{l=2}p_{l})/3 \text{ and }p_{0}=2p_{1}.
\end{equation}Note that \(p_{l}>0\) for \(l\ge2\) since \((\overline{\nu}(i),i\ge0)  \) 
is a strictly decreasing sequence. Furthermore,\begin{eqnarray}\label{eq:pr2^lnudep}
\sum^{\infty}_{l=2}2^{l}p_{l}&=&\sum^{\infty}_{l=2}\frac{\overline{\nu}(k2^{l-2})-\overline{\nu}(k2^{l-1})}{\overline{\nu}(k)}\nonumber\\
&=&1.\end{eqnarray} Hence \(\sum^{\infty}_{l=2}p_{l}\leq1/4\).  Consequently,   \(p_{1}\ge1/4\),     \(p_{0}\ge1/2\), and \((p_{l},l\geq0)\)  is a probability distribution. The \(p_{l}\)'s have been chosen so that the summands in the bounds on \(T_{k}\) and  \(E(Z_{k}^{2})\) in Lemma~\ref{le:generalPl}  are proportional for \(l\geq2\). Lemma~\ref{le:specialPl} shows that \(Z_{k}\) is an unbiased estimator of  \(\mu-E(f_{k,0})\) and provides  bounds on its  second moment and  expected running time. Note that the bound on \(E(Z_{k}^{2})\) is, up to a multiplicative constant, the square of the bound on \(|E(Z_{k})|\) that follows from~\eqref{eq:thLongRunBias} and  the equality \(E(Z_{k})=\mu-E(f_{k,0})\).    
\begin{lemma}\label{le:specialPl} Suppose that A1 holds and that  \((p_{l},l\geq0)\) are given by \eqref{eq:plDefBeta} and \eqref{eq:pldefl01}. For \(k\ge1\), we have \(E(f_{k,0}+Z_{k})=\mu\), \(T_{k}\le9k\), and \begin{equation}
\label{eq:ZNSecondMoment}
kE(Z_{k}^{2})\le20(\overline{\nu}(\lfloor{b'(k)}/{2}\rfloor))^{2}.
\end{equation}    
\end{lemma}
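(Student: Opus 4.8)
The plan is to plug the explicit distribution $(p_l,l\ge0)$ of \eqref{eq:plDefBeta}--\eqref{eq:pldefl01} into the two general bounds of Lemma~\ref{le:generalPl}, so that the whole argument reduces to bookkeeping. First I would record the elementary facts already established in the text: $\sum_{l\ge2}2^lp_l=1$ by \eqref{eq:pr2^lnudep}, hence $\sum_{l\ge2}p_l\le1/4$, and therefore $p_1\ge1/4$ and $p_0=2p_1\ge1/2$; since also $\sum_{l\ge2}p_l\ge0$ we have $3p_1\le1$, i.e.\ $p_1\le1/3$. The running-time bound is then immediate: $\sum_{l\ge0}2^lp_l=p_0+2p_1+\sum_{l\ge2}2^lp_l=4p_1+1\le7/3$, so the first bound of Lemma~\ref{le:generalPl} gives $T_k\le3k\cdot(7/3)=7k\le9k$.

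For the second moment I would bound the two pieces of the right-hand side of \eqref{eq:ZNSecondMomentGeneralCase} separately. The first piece is $2(\overline{\nu}(\lfloor b'(k)/2\rfloor))^2(1/p_0+1/p_1)\le2(\overline{\nu}(\lfloor b'(k)/2\rfloor))^2(2+4)=12(\overline{\nu}(\lfloor b'(k)/2\rfloor))^2$, using $p_0\ge1/2$ and $p_1\ge1/4$. For the second piece, substituting the value of $p_l$ from \eqref{eq:plDefBeta} collapses the $l$th summand to $8\,\overline{\nu}(k)(\overline{\nu}(k2^{l-2})-\overline{\nu}(k2^{l-1}))$, and the series over $l\ge2$ telescopes: its partial sum up to $l=L$ equals $\overline{\nu}(k)-\overline{\nu}(k2^{L-1})$, which tends to $\overline{\nu}(k)$ because $\overline{\nu}(j)\to0$ as $j\to\infty$ (noted just before Theorem~\ref{th:biasedLongRun}). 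Hence the second piece equals $8(\overline{\nu}(k))^2$, and since $\overline{\nu}$ is decreasing and $\lfloor b'(k)/2\rfloor\le b'(k)\le k$ we have $\overline{\nu}(k)\le\overline{\nu}(\lfloor b'(k)/2\rfloor)$, so this piece is at most $8(\overline{\nu}(\lfloor b'(k)/2\rfloor))^2$. Adding the two pieces gives $kE(Z_k^2)\le20(\overline{\nu}(\lfloor b'(k)/2\rfloor))^2$, which is \eqref{eq:ZNSecondMoment}.

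Finally, $\overline{\nu}(0)$ is finite under A1, so the right-hand side of \eqref{eq:ZNSecondMomentGeneralCase} is finite; the last sentence of Lemma~\ref{le:generalPl} then yields that $Z_k$ is square-integrable and $E(f_{k,0}+Z_k)=\mu$, completing the proof. I do not expect a real obstacle here: the only mildly delicate step is the telescoping in the second-moment bound, where one must check that the tail $\overline{\nu}(k2^{L-1})$ vanishes so that the sum of the level-$l$ contributions equals $8(\overline{\nu}(k))^2$ and not merely an upper bound; everything else is arithmetic with the already-derived lower bounds on $p_0,p_1$ and the monotonicity of $\overline{\nu}$.
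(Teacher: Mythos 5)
Your proof is correct and follows essentially the same route as the paper's: plug the explicit $p_l$'s into the two bounds of Lemma~\ref{le:generalPl}, bound the $l\in\{0,1\}$ terms via $p_0\ge1/2$, $p_1\ge1/4$, and telescope the $l\ge2$ terms to $8(\overline\nu(k))^2\le8(\overline\nu(\lfloor b'(k)/2\rfloor))^2$. The only (inessential) difference is in the $T_k$ estimate, where you use $p_1\le1/3$ to get the sharper $\sum_l2^lp_l\le7/3$ whereas the paper settles for $\le3$; both yield $T_k\le9k$.
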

Set
\begin{equation}\label{eq:defq}
q=\frac{\overline{\nu}(\lfloor{b'(k)}/{2}\rfloor)}{\overline{\nu}(0)}.
\end{equation}
Section~\ref{se:motivationforq} gives a motivation for \eqref{eq:defq}. 
\begin{theorem}\label{th:main}
Suppose that A1 holds, that \(k\ge1\), and that  \((p_{l},l\geq0)\) and \(q\) are given by \eqref{eq:plDefBeta}, \eqref{eq:pldefl01} and \eqref{eq:defq}. Then \(\hat f_{k}\) is square-integrable and \(E(\hat f_{k})=\mu\). Moreover,  \(\hat T_{k}\le k+9qk\), and \begin{equation}\label{eq:timeVarianceUBound}
\hat T_{k}\var(\hat f_{k})\leq k\var(f_{k})+8610(\overline{\nu}(0))^{2}\max\left(q,\sqrt{\frac{b'(k)-b(k)}{k}}\right).
\end{equation}  
\end{theorem}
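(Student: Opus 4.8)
The plan is to assemble the statement from Lemma~\ref{le:specialPl}, Lemma~\ref{le:boundVarfk0} and Lemma~\ref{le:StdSum}, using the independence built into the definition \eqref{eq:defHatfk} of \(\hat f_k\). Since A1 holds and the \(p_l\)'s are given by \eqref{eq:plDefBeta}--\eqref{eq:pldefl01}, Lemma~\ref{le:specialPl} yields \(E(f_{k,0}+Z_k)=\mu\), \(T_k\le 9k\), and \(kE(Z_k^{2})\le20(\overline{\nu}(\lfloor b'(k)/2\rfloor))^{2}<\infty\); in particular \(Z_k\), hence its copy \(Z'_k\), is square-integrable, and so is the finite average \(f_{k,0}\). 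Because \(Q\) is \(\{0,1\}\)-valued with \(\Pr(Q=1)=q\) and independent of \((f_{k,0},Z'_k)\), while \(Z'_k\) is independent of \(f_{k,0}\), I would first note \(E(q^{-1}QZ'_k)=q^{-1}E(Q)E(Z'_k)=E(Z_k)\) and, using \(Q^{2}=Q\), \(E((q^{-1}QZ'_k)^{2})=q^{-1}E(Z_k^{2})<\infty\). Hence \(\hat f_k=f_{k,0}+q^{-1}QZ'_k\) is square-integrable and \(E(\hat f_k)=E(f_{k,0})+E(Z_k)=\mu\). The time bound is immediate from \(\hat T_k=k+qT_k\) and \(T_k\le 9k\).

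For \eqref{eq:timeVarianceUBound}, the same independence gives the exact decomposition \(\var(\hat f_k)=\var(f_{k,0})+\var(q^{-1}QZ'_k)\le\var(f_{k,0})+q^{-1}E(Z_k^{2})\). Multiplying by \(\hat T_k\le(1+9q)k\) I obtain \(\hat T_k\var(\hat f_k)\le(1+9q)\bigl(k\var(f_{k,0})+kq^{-1}E(Z_k^{2})\bigr)\). I would then substitute Lemma~\ref{le:boundVarfk0}, namely \(k\var(f_{k,0})\le k\var(f_k)+796(\overline{\nu}(0))^{2}\sqrt{(b'(k)-b(k))/k}\), and use \eqref{eq:defq}, which gives \(\overline{\nu}(\lfloor b'(k)/2\rfloor)=q\,\overline{\nu}(0)\) and hence \(kq^{-1}E(Z_k^{2})\le 20q(\overline{\nu}(0))^{2}\) by Lemma~\ref{le:specialPl}.

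Expanding the product leaves, besides the desired \(k\var(f_k)\), a spurious term \(9q\,k\var(f_k)\) and several error terms proportional to \((\overline{\nu}(0))^{2}\) times \(q\), \(q^{2}\), \(\sqrt{(b'(k)-b(k))/k}\), or the product of the last two. The only genuinely delicate point is to keep the coefficient of \(k\var(f_k)\) equal to exactly \(1\) rather than \(1+9q\): for this I would invoke the a priori bound \(k\var(f_k)\le 50(\overline{\nu}(0))^{2}\), obtained by applying Lemma~\ref{le:StdSum} to the average of the \(k-b(k)\ge k/2\) terms defining \(f_k\) (here \(b(k)\le k/2\) is used), which lets the term \(9q\,k\var(f_k)\) be absorbed into a multiple of \(q(\overline{\nu}(0))^{2}\). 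Using also \(q\le1\) (as \(\overline{\nu}\) is decreasing) and \(\sqrt{(b'(k)-b(k))/k}\le 1/\sqrt{2}<1\) (as \(b'(k)\le k/2\)), every remaining error term is at most a constant multiple of \((\overline{\nu}(0))^{2}\max(q,\sqrt{(b'(k)-b(k))/k})\); collecting the constants gives \(796(\overline{\nu}(0))^{2}\sqrt{(b'(k)-b(k))/k}+7814(\overline{\nu}(0))^{2}q\le 8610(\overline{\nu}(0))^{2}\max(q,\sqrt{(b'(k)-b(k))/k})\), which is \eqref{eq:timeVarianceUBound}. The main obstacle is therefore not conceptual but the careful bookkeeping of constants, together with spotting that the variance bound of Lemma~\ref{le:StdSum} is exactly what prevents a \((1+9q)\) loss in front of \(k\var(f_k)\).
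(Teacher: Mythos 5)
Your proposal is correct and follows essentially the same route as the paper's proof: the decomposition \(\var(\hat f_k)\le\var(f_{k,0})+q^{-1}E(Z_k^2)\), Lemma~\ref{le:specialPl} for the time bound and for \(E(Z_k^2)\), Lemma~\ref{le:boundVarfk0} for \(\var(f_{k,0})\), and crucially the a priori bound \(k\var(f_k)\le 50(\overline\nu(0))^2\) from Lemma~\ref{le:StdSum} to absorb the \(9q\,k\var(f_k)\) cross term. The only cosmetic difference is that the paper introduces \(\beta(k):=\max(q,\sqrt{(b'(k)-b(k))/k})\) early and distributes once, whereas you track \(q\) and \(\sqrt{(b'(k)-b(k))/k}\) separately and combine them via the max at the very end; the constant bookkeeping agrees (\(450+816+7344=8610\) in the paper, \(796+(20+450+7164+180)=796+7814=8610\) in yours).
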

\eqref{eq:timeVarianceUBound} gives a bound on the work-normalized variance of \(\hat f_{k}\) in terms of the work-normalized variance of \(f_{k}\). The constant \(8610\) is an artifact of our calculations. By setting \(b'(k)=\max(b(k),\lceil\sqrt{k}\rceil/2)\), it is easy to check that the second term in the  RHS of  \eqref{eq:timeVarianceUBound} goes to \(0\) as \(k\) goes to infinity. Consequently, under Assumption B, for any given \(\epsilon>0\), we have\begin{equation*}
\hat T_{k}\var(\hat f_{k})\leq (1+\epsilon)k\var(f_{k})
\end{equation*} for sufficiently large \(k\).
In other words, the work-normalized variance of \(\hat f_{k}\) is at most equal to that of \(f_{k}\), up to the multiplicative factor \(1+\epsilon\). Thus, \(\hat f_{k}\) is asymptotically at least as efficient as  \(f_{k}\), as \(k\) goes to infinity,  up to a multiplicative constant arbitrarily close to \(1\).   \comment{\begin{corollary}\begin{displaymath}
b'(k)=\min(k/2,\max(2,b(k),\frac{1}{\xi }\ln(k))
\end{displaymath}   
\end{corollary}
}
\subsection{Oblivious parameters}\label{sub:Oblivious}
When the sequence \(\overline\nu\) is known or can be estimated, the choices of    \((p_{l},l\geq0)\),  of \(q\) and of \(b'(k)\) in Subsection~\ref{sub:Sdependent} yield an  \(\hat f_{k}\) that is asymptotically at least as efficient as  \(f_{k}\). Under certain assumptions and without  explicit knowledge of \(\bar\nu\),   this subsection provides choices of    \((p_{l},l\geq0)\),  of \(q\) and of \(b'(k)\)   so that the work-normalized variance of    \(\hat f_{k}\) is at most equal to that of   \(f_{k}\), up to a multiplicative factor arbitrarily close to \(1\).    We first state the following assumption.  \begin{description}
\item[Assumption 3 (A3).] For  \(l\geq0\),  \begin{equation}\label{eq:obliviouspl}
p_{l}=\frac{1}{\theta(l)2^{l}}-\frac{1}{\theta(l+1)2^{l+1}},
\end{equation} where \(\theta\) is an increasing function on \([0,\infty)\), with 
 \(\theta(x)=1\) for \(x\in[0,1]\),  
\begin{equation}\label{eq:infiniteSumAssumptionZeta}
\sum^{\infty}_{l=0}\frac{1}{\theta^{}(l)}<\infty.
\end{equation} Furthermore, Assumption A1 holds and \begin{equation}\label{eq:infiniteSumAssumptionZetaVu}
\sum^{\infty}_{i=0}\sqrt{\frac{\nu(i)\theta(\log_{2}(4i+1))}{i+1}}<\infty.
\end{equation} \end{description}
Observe that the \(p_{l}\)'s given in \eqref{eq:obliviouspl} depend only on \(\theta\), and that  \eqref{eq:infiniteSumAssumptionZetaVu} is a stronger version of 
 \eqref{eq:infiniteSumAssumption}. Standard calculations show the following.   

\begin{example}\label{ex:exponentialDist}Suppose that, for some positive constants \(c\), \(\xi\) and \(\delta\) with \(\delta<\xi-1\), Assumption A1 holds with \(\nu(i)= c(i+1)^{-\xi}\) for \(i\geq0\),  and that the \(p_{l}\)'s are given by \eqref{eq:obliviouspl},  with  
 \(\theta(x)=1\) for \(x\in[0,1]\), and \(\theta(x)=2^{\delta(x-1)}\) for \(x\geq1\). Then Assumption A3 holds and \(p_{l}\) is of order \(2^{-(\delta+1)l}\).   \end{example}
Distributions with exponentially decreasing tails have  been previously used in  RMLMC pricing of financial derivatives \cite{GlynnRhee2015unbiased,kahale2020Asian}. In Example~\ref{ex:exponentialDist}, the choice of the \(p_{l}\)'s depends  on \(\xi\) because of the condition \(\delta<\xi-1\). Example \ref{ex:thetaPowerExample} shows that  the \(p_{l}\)'s can chosen without any knowledge on \(\xi\).

\begin{example}\label{ex:thetaPowerExample} Suppose that, for some positive constants \(c\) and \(\xi\) with \( \xi>1\), Assumption A1 holds with \(\nu(i)= c(i+1)^{-\xi}\) for \(i\geq0\),    and that the \(p_{l}\)'s are given by \eqref{eq:obliviouspl},  with \(\theta(x)=\max(1,x)^{\delta}\) for \(x\geq0\), where \(\delta>1\).   Then Assumption A3 holds and \(p_{l}\) is of order \(l^{-\delta}2^{- l}\).\end{example}
When Assumption A2 holds, Example \ref{ex:exponentialDistA2} shows that the \(p_{l}\)'s can be chosen as in Example~\ref{ex:exponentialDist} without any further knowledge on \(\nu\).

\begin{example} \label{ex:exponentialDistA2}Suppose that Assumption A2 holds  and that the \(p_{l}\)'s are given by \eqref{eq:obliviouspl},  with  
 \(\theta(x)=1\) for \(x\in[0,1]\), and \(\theta(x)=2^{\delta(x-1)}\) for \(x\geq1\), where \(\delta\)   is a positive constant. Then Assumption A3 holds.    \end{example}
Suppose now that Assumption A3 holds.   
For \(j\ge0\), let
\begin{equation*}
\overline{\nu}_{\theta}(j):=\sum^{\infty}_{i=j}\sqrt{\frac{\nu(i)\theta(\log_{2}(4i+1))}{i+1}}.
\end{equation*}
Assumption A3 shows that \(\overline{\nu}_{\theta}(j)\) is finite and goes to \(0\) as \(j\) goes to infinity, and that   \(\overline{\nu}(j)\le \overline{\nu}_{\theta}(j)\) for \(j\geq0\). 
Lemma~\ref{le:ObliviousPl} shows that, under Assumption A3, \(Z_{k}\) is an unbiased estimator of  \(\mu-E(f_{k,0})\), and provides  a bound on its  second moment and on \(T_{k}\).
\begin{lemma}\label{le:ObliviousPl} Suppose that Assumption A3  holds.  Then, for  \(k\geq1\), we have \(E(f_{k,0}+Z_{k})=\mu\) and \begin{equation}
\label{eq:ZNSecondMomentOblivious}
kE(Z_{k}^{2})\le28(\overline{\nu}_{\theta}(\lfloor{b'(k)}/{2}\rfloor))^{2}.
\end{equation}Furthermore, \(T_{k}\le3k\sum^{\infty}_{l=0}1/\theta(l)\).     
\end{lemma}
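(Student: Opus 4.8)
The plan is to deduce all three assertions from Lemma~\ref{le:generalPl}, by bounding the four quantities that occur in its statement — $1/p_0$, $1/p_1$, $\sum_{l\ge0}2^{l}p_l$, and the tail $\sum_{l\ge2}2^{3-l}(\overline{\nu}(k2^{l-2})-\overline{\nu}(k2^{l-1}))^2/p_l$ — using only the explicit form \eqref{eq:obliviouspl} of the $p_l$'s, the monotonicity of $\theta$, and the summability conditions in Assumption~A3. First I would record elementary facts about $(p_l,l\ge0)$: since $\theta$ is positive and increasing, $\frac{1}{\theta(l+1)2^{l+1}}\le\frac{1}{\theta(l)2^{l+1}}<\frac{1}{\theta(l)2^{l}}$, so $p_l>0$, and summing \eqref{eq:obliviouspl} telescopes to $\sum_{l=0}^{L}p_l=1-\frac{1}{\theta(L+1)2^{L+1}}$, which tends to $1$; hence $(p_l,l\ge0)$ is a probability distribution on $\mathbb{N}$ with $p_l>0$, so Lemma~\ref{le:generalPl} applies. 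Using $\theta(0)=\theta(1)=1$ gives $p_0=1-\frac{1}{2\theta(1)}=\frac12$ and $p_1=\frac12-\frac{1}{4\theta(2)}\ge\frac14$, so $1/p_0+1/p_1\le6$; and for $l\ge2$, $p_l\ge\frac{1}{\theta(l)2^{l}}-\frac{1}{\theta(l)2^{l+1}}=\frac{1}{\theta(l)2^{l+1}}$, hence $2^{3-l}/p_l\le16\,\theta(l)$.

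For the running time, Lemma~\ref{le:generalPl} gives $T_k\le3k\sum_{l\ge0}2^{l}p_l$; since $2^{l}p_l=\frac{1}{\theta(l)}-\frac{1}{2\theta(l+1)}$ the partial sums telescope to $\sum_{l=0}^{L}2^{l}p_l=\frac{1}{\theta(0)}+\frac12\sum_{l=1}^{L}\frac{1}{\theta(l)}-\frac{1}{2\theta(L+1)}\le\sum_{l\ge0}\frac{1}{\theta(l)}$, finite by \eqref{eq:infiniteSumAssumptionZeta}, which gives $T_k\le3k\sum_{l\ge0}1/\theta(l)$. For the second moment I would substitute the bounds above into \eqref{eq:ZNSecondMomentGeneralCase}: the first term is at most $2\cdot6\,(\overline{\nu}(\lfloor b'(k)/2\rfloor))^2\le12\,(\overline{\nu}_{\theta}(\lfloor b'(k)/2\rfloor))^2$ since $\overline{\nu}\le\overline{\nu}_{\theta}$, and the tail is at most $16\sum_{l\ge2}\theta(l)\bigl(\overline{\nu}(k2^{l-2})-\overline{\nu}(k2^{l-1})\bigr)^2$. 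The key step is to absorb the factor $\theta(l)$ into $\overline{\nu}_{\theta}$: write $\overline{\nu}(k2^{l-2})-\overline{\nu}(k2^{l-1})=\sum_{i=k2^{l-2}}^{k2^{l-1}-1}\sqrt{\nu(i)/(i+1)}$, and note that for $i$ in this range $4i+1\ge k2^{l}\ge2^{l}$, whence $\theta(\log_2(4i+1))\ge\theta(l)$; this gives $\sqrt{\theta(l)}\bigl(\overline{\nu}(k2^{l-2})-\overline{\nu}(k2^{l-1})\bigr)\le\overline{\nu}_{\theta}(k2^{l-2})-\overline{\nu}_{\theta}(k2^{l-1})$. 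The dyadic blocks $[k2^{l-2},k2^{l-1})$, $l\ge2$, partition $[k,\infty)$, so the nonnegative increments $\overline{\nu}_{\theta}(k2^{l-2})-\overline{\nu}_{\theta}(k2^{l-1})$ telescope to $\overline{\nu}_{\theta}(k)$, and the elementary inequality $\sum a_l^2\le(\sum a_l)^2$ for nonnegative $a_l$ bounds $\sum_{l\ge2}\theta(l)(\overline{\nu}(k2^{l-2})-\overline{\nu}(k2^{l-1}))^2$ by $(\overline{\nu}_{\theta}(k))^2$. Finally $\lfloor b'(k)/2\rfloor\le b'(k)\le k/2\le k$ and the monotonicity of $\overline{\nu}_{\theta}$ give $\overline{\nu}_{\theta}(k)\le\overline{\nu}_{\theta}(\lfloor b'(k)/2\rfloor)$, so \eqref{eq:ZNSecondMomentGeneralCase} yields $kE(Z_k^2)\le(12+16)(\overline{\nu}_{\theta}(\lfloor b'(k)/2\rfloor))^2=28(\overline{\nu}_{\theta}(\lfloor b'(k)/2\rfloor))^2$; this is finite by \eqref{eq:infiniteSumAssumptionZetaVu}, so the hypothesis of Lemma~\ref{le:generalPl} holds, $Z_k$ is square-integrable, and $E(f_{k,0}+Z_k)=\mu$.

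The only genuinely non-routine step is the one absorbing the weight $\theta(l)$ into $\overline{\nu}_{\theta}$; it works precisely because in the definition of $\overline{\nu}_{\theta}$ the argument of $\theta$ is $\log_2(4i+1)$, which is at least the dyadic level $l$ throughout the block $[k2^{l-2},k2^{l-1})$ for every $k\ge1$ — this is exactly why \eqref{eq:infiniteSumAssumptionZetaVu} is stated with $\theta(\log_2(4i+1))$ rather than some other rescaling of $i$. Everything else is telescoping, the inequality $\sum a_l^2\le(\sum a_l)^2$, and the bookkeeping already packaged in Lemma~\ref{le:generalPl}.
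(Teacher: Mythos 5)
Your proof is correct and follows essentially the same route as the paper's: the same pointwise bounds $p_l\ge 2^{-l-1}/\theta(l)$ (giving $1/p_0+1/p_1\le 6$ and $2^{3-l}/p_l\le 16\theta(l)$), the same key observation that $4i+1\ge 2^l$ on the block $[k2^{l-2},k2^{l-1})$ absorbs $\sqrt{\theta(l)}$ into the increments of $\overline{\nu}_{\theta}$, the same telescoping to $\overline{\nu}_{\theta}(k)$, and the same appeal to Lemma~\ref{le:generalPl}. The only cosmetic differences are that you invoke $\sum a_l^2\le(\sum a_l)^2$ where the paper writes $\sum a_l^2\le\overline{\nu}_{\theta}(k)\sum a_l$, and you give a telescoping derivation of $\sum_l 2^l p_l\le\sum_l 1/\theta(l)$ where the paper simply uses $p_l\le 2^{-l}/\theta(l)$.
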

 Theorem~\ref{th:maintheta} shows that, under Assumption A3,  \(\hat f_{k}\) is an unbiased estimator of \(\mu\) and gives bounds on its running time and variance.   
\begin{theorem}\label{th:maintheta}
Suppose that Assumption A3  holds. Then, for \(k\ge1\), \(\hat f_{k}\) is square-integrable, \(E(\hat f_{k})=\mu\), and \begin{equation}\label{eq:varfHatkOblivious}
k\var(\hat f_{k})\leq k\var(f_{k})+796(\overline{\nu}(0))^{2}\sqrt{\frac{b'(k)-b(k)}{k}}+\frac{28}{q}(\overline{\nu}_{\theta}(\lfloor{b'(k)}/{2}\rfloor))^{2}.
\end{equation} Moreover, \(\hat T_{k}\le k+3(\sum^{\infty}_{l=0}1/\theta(l))qk\). \end{theorem}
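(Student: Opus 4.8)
The plan is to obtain every assertion of the theorem by combining Lemmas~\ref{le:boundVarfk0} and~\ref{le:ObliviousPl}, which already encapsulate the two substantive estimates (the comparison of $\var(f_{k,0})$ with $\var(f_k)$, and the bound on the second moment of the single-term correction $Z_k$); the remaining work is essentially a variance decomposition exploiting the independence structure built into $\hat f_k$.

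First I would dispose of integrability and unbiasedness. Under Assumption~A3, Lemma~\ref{le:ObliviousPl} gives that $kE(Z_k^2)$ is finite and that $E(f_{k,0}+Z_k)=\mu$; hence $Z_k$, and therefore its independent copy $Z'_k$, is square-integrable. Since $f_{k,0}$ is an average of the square-integrable variables $f(X_i)$ and $q^{-1}Q$ is bounded, $\hat f_k=f_{k,0}+q^{-1}QZ'_k$ is square-integrable. Taking expectations and using that $Q$ is independent of $(f_{k,0},Z'_k)$ with $\Pr(Q=1)=q$, together with $Z'_k\sim Z_k$, gives $E(\hat f_k)=E(f_{k,0})+q^{-1}E(Q)E(Z'_k)=E(f_{k,0})+E(Z_k)=\mu$. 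The running-time bound is then immediate from $\hat T_k=k+qT_k$ and the bound $T_k\le 3k\sum_{l\ge 0}1/\theta(l)$ of Lemma~\ref{le:ObliviousPl}.

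The main step is the variance bound~\eqref{eq:varfHatkOblivious}. Writing $W:=q^{-1}QZ'_k$, I would first check that $f_{k,0}$ and $W$ are uncorrelated: since $Z'_k$ is independent of $f_{k,0}$ and $Q$ is independent of $(f_{k,0},Z'_k)$, one has $E(f_{k,0}W)=q^{-1}E(Q)E(f_{k,0})E(Z'_k)=E(f_{k,0})E(W)$. Hence $\var(\hat f_k)=\var(f_{k,0})+\var(W)$, and using $Q^2=Q$ one gets $\var(W)=q^{-2}E(Q)E(Z_k^2)-(E(Z_k))^2\le q^{-1}E(Z_k^2)$. Multiplying through by $k$ and substituting the bound of Lemma~\ref{le:boundVarfk0} for $k\var(f_{k,0})$ and~\eqref{eq:ZNSecondMomentOblivious} for $kE(Z_k^2)$ yields exactly~\eqref{eq:varfHatkOblivious}.

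I do not anticipate a genuine obstacle, as the heavy lifting is delegated to the two lemmas. The only point demanding care is the covariance computation: one must use the precise independence of $Q$, of $Z'_k$, and of $f_{k,0}$ prescribed by~\eqref{eq:defHatfk}, and resist the temptation to treat $f_{k,0}$ and $W$ as independent --- they are merely uncorrelated, which is all that the decomposition $\var(\hat f_k)=\var(f_{k,0})+\var(W)$ requires.
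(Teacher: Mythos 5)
Your proof is correct and follows essentially the same route as the paper's: variance decomposition of $\hat f_k = f_{k,0} + q^{-1}QZ'_k$ via the bound $\var(\hat f_k)\le\var(f_{k,0})+q^{-1}E(Z_k^2)$ (which the paper quotes as~\eqref{eq:varV}), combined with Lemma~\ref{le:boundVarfk0} and~\eqref{eq:ZNSecondMomentOblivious}. One small correction to your closing remark: under the stated construction ($Z'_k$ independent of $f_{k,0}$, and $Q$ independent of $(f_{k,0},Z'_k)$), the variable $f_{k,0}$ is in fact \emph{independent} of $(Q,Z'_k)$ and hence of $W=q^{-1}QZ'_k$, not merely uncorrelated --- though, as you note, uncorrelatedness is all that the variance decomposition needs.
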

Observe that the second (resp. last) term in the right-hand side of  \eqref{eq:varfHatkOblivious} is an increasing (resp. decreasing) function of \(b'(k)\). Likewise,   the bound on the variance (resp. running time) of  \(\hat f_{k}\)  is a decreasing (resp. increasing) function of \(q\). 
Theorem~\ref{th:maintheta} shows that setting
\begin{equation*}q=\frac{\epsilon}{3\sum^{\infty}_{l=0}1/\theta(l)}, 
\end{equation*} where \(\epsilon\in(0,1)\),  ensures that \(\hat T_{k}\le k(1+\epsilon)\). Furthermore,  if \(b'(k)=\max(b(k),\lceil\sqrt{k}\rceil/2)\), then \(k\var(\hat f_{k})\leq k\var(f_{k})+\epsilon\) for sufficiently large \(k\). This is because \(\overline{\nu}_{\theta}(j)\) goes to \(0\) as \(j\) goes to infinity.
 Then,  under Assumption B, for any given \(\epsilon'>0\), if \(\epsilon\) is sufficiently small and \(k\) sufficiently large, we have\begin{equation*}
\hat T_{k}\var(\hat f_{k})\leq (1+\epsilon')k\var(f_{k}).
\end{equation*} Here again, \(\hat f_{k}\) is asymptotically at least as efficient as  \(f_{k}\), as \(k\) goes to infinity,  up to a multiplicative factor arbitrarily close to \(1\). In practice, in the absence of precise knowledge on the behavior of the chain, setting \(b'(k)=\max(b(k),\lfloor \epsilon''k\rfloor)\), where \(\epsilon''\in(0,1/2]\), e.g., \(\epsilon''=0.1\), would make \(b'(k)\) reasonably large without deleting too many observations.

Under Assumption A2, and for specific values of the \(p_{l}\)'s,   Theorem~\ref{th:ObliviousPolynomialZeta} gives a bound on the  variance of \(\hat f_{k}\) that depends explicitly on \(c\) and \(\xi\). It also provides an improved variance bound  under an additional decay  assumption on \(\nu\).         
\begin{theorem}\label{th:ObliviousPolynomialZeta}
Suppose that Assumption A2  holds, and that the \(p_{l}\)'s are given by \eqref{eq:obliviouspl},  with \(\theta(x)=\max(1,x)^{\delta}\) for \(x\geq0\), where \(\delta\in(1,2]\). Then, for \(k\ge1\),   Assumption A3  holds, \begin{equation}\label{eq:A2T^kbound}
\hat T_{k}\le k(1+\frac{9q}{\delta-1}),
\end{equation}  and 
 \begin{equation}\label{eq:kVarHatfkBoundExpoNu}
k\var(\hat f_{k})\leq k\var(f_{k})+\frac{Ac}{\xi}\sqrt{\frac{b'(k)-b(k)}{k}}+\frac{Ac}{q\xi}\min\left(\ln^{\delta}\left(\frac{3}{\xi}\right),\frac{e^{-\xi b'(k)/2}}{\xi}\right),
\end{equation} where \(A\) is an absolute constant. Moreover, if \(\nu(i)\le c/(i+1)\) for \(i\geq0\), then\begin{equation}\label{eq:kVarHatfkBoundExpoGoemNu}
k\var(\hat f_{k})\leq k\var(f_{k})+A'c\ln^{2}\left(\frac{3}{\xi}\right)\sqrt{\frac{b'(k)-b(k)}{k}}+\frac{A'c}{q}\min\left(\ln^{\delta+2}\left(\frac{3}{\xi}\right),\frac{e^{-\xi b'(k)/2}}{\xi^{2}}\right),
\end{equation}where \(A'\) is an absolute constant.\end{theorem}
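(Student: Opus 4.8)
The plan is to verify that Assumption~A3 holds for the prescribed \(\theta\) and \(p_{l}\)'s, and then to specialise the conclusions of Theorem~\ref{th:maintheta}. Checking A3 is routine: \(\theta(x)=\max(1,x)^{\delta}\) is increasing on \([0,\infty)\) and equals \(1\) on \([0,1]\), so (being increasing) it makes \((p_{l},l\ge0)\) a genuine probability distribution; since \(\delta>1\), \(\sum_{l\ge0}1/\theta(l)=1+\sum_{l\ge1}l^{-\delta}<\infty\); Assumption~A1 is inherited from A2; and the only remaining requirement, \eqref{eq:infiniteSumAssumptionZetaVu}, follows from \(\nu(i)\le ce^{-\xi i}\) together with \(\theta(\log_{2}(4i+1))\le(1+\log_{2}(4i+1))^{\delta}\), since \(\sum_{i\ge0}e^{-\xi i/2}(1+\log_{2}(4i+1))^{\delta/2}/\sqrt{i+1}<\infty\) (the geometric factor dominates the logarithmic one). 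Hence A3 holds, and Theorem~\ref{th:maintheta} immediately gives that \(\hat f_{k}\) is square-integrable with \(E(\hat f_{k})=\mu\).

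For \eqref{eq:A2T^kbound}, I would bound \(\sum_{l\ge0}1/\theta(l)=1+\sum_{l\ge1}l^{-\delta}\le 2+1/(\delta-1)\) by an integral comparison, note that \(2+1/(\delta-1)\le 3/(\delta-1)\) whenever \(\delta\le2\), and substitute into the inequality \(\hat T_{k}\le k+3\left(\sum_{l\ge0}1/\theta(l)\right)qk\) from Theorem~\ref{th:maintheta}.

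The substance of the proof is the variance bound \eqref{eq:varfHatkOblivious}, whose two error terms I would estimate separately. For \((\overline{\nu}(0))^{2}\), Proposition~\ref{pr:UpperBoundDelta} gives \(81c/\xi\) under A2, and \(196c\ln^{2}(2/\xi)\le196c\ln^{2}(3/\xi)\) under the additional hypothesis \(\nu(i)\le c/(i+1)\); these yield the first error terms of \eqref{eq:kVarHatfkBoundExpoNu} and \eqref{eq:kVarHatfkBoundExpoGoemNu}. For \((\overline{\nu}_{\theta}(\lfloor b'(k)/2\rfloor))^{2}\), since \(\overline{\nu}_{\theta}\) is decreasing and \(\lfloor b'(k)/2\rfloor\ge b'(k)/2-1\), I would derive two bounds on \(\overline{\nu}_{\theta}(j)\), valid for all \(j\ge0\), and keep their minimum. (i) Because \(\delta\le2\), the map \(i\mapsto\theta(\log_{2}(4i+1))/(i+1)\) is bounded on \([0,\infty)\) by an absolute constant \(M\); hence \(\overline{\nu}_{\theta}(j)\le\sqrt{cM}\sum_{i\ge j}e^{-\xi i/2}=\sqrt{cM}\,e^{-\xi j/2}/(1-e^{-\xi/2})\), and \(1-e^{-\xi/2}\ge\xi/4\) gives \((\overline{\nu}_{\theta}(\lfloor b'(k)/2\rfloor))^{2}\le A_{1}(c/\xi^{2})e^{-\xi b'(k)/2}\), the second argument of both minima. (ii) To bound \(\overline{\nu}_{\theta}(0)\) under A2 alone, split its defining sum at \(i_{0}=\lceil1/\xi\rceil\): for \(i<i_{0}\) use \(e^{-\xi i}\le1\), \(\log_{2}(4i+1)=O(\ln(3/\xi))\) and \(\sum_{i<i_{0}}(i+1)^{-1/2}=O(\xi^{-1/2})\); for \(i\ge i_{0}\) use \((i+1)^{-1}\le\xi\) and let the geometric decay of \(e^{-\xi i/2}\) absorb the logarithmic weight. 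Both parts are \(O(\sqrt{c}\,\xi^{-1/2}\ln^{\delta/2}(3/\xi))\), so \((\overline{\nu}_{\theta}(0))^{2}\le A(c/\xi)\ln^{\delta}(3/\xi)\), the first argument of the minimum in \eqref{eq:kVarHatfkBoundExpoNu}. (iii) Under \(\nu(i)\le c/(i+1)\) one also has \(\nu(i)\le c\min(e^{-\xi i},(i+1)^{-1})\); splitting the sum for \(\overline{\nu}_{\theta}(0)\) at the crossover \(I\) of these two estimates (of order \(\xi^{-1}\ln(1/\xi)\)), using \(\nu(i)\le c/(i+1)\) on \(i\le I\) — contribution \(O(\sqrt{c}\,\ln^{\delta/2}(3/\xi)\sum_{i\le I}(i+1)^{-1})=O(\sqrt{c}\,\ln^{(\delta+2)/2}(3/\xi))\) — and \(\nu(i)\le ce^{-\xi i}\) with \((i+1)^{-1}=O(\xi/\ln(1/\xi))\) on \(i>I\) — a lower-order contribution — gives \((\overline{\nu}_{\theta}(0))^{2}\le A'c\ln^{\delta+2}(3/\xi)\), the first argument of the minimum in \eqref{eq:kVarHatfkBoundExpoGoemNu}. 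Combining the middle-term bound with the minimum of (i) and (ii), respectively (i) and (iii), and absorbing numerical constants, yields \eqref{eq:kVarHatfkBoundExpoNu} and \eqref{eq:kVarHatfkBoundExpoGoemNu}.

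The main obstacle is step (iii) — and, to a lesser extent, (ii): carrying out the tail estimates of \(\overline{\nu}_{\theta}(0)\) with the polylogarithmic weights \(\theta(\log_{2}(4i+1))\) sharply enough to land exactly on \(\ln^{\delta}(3/\xi)\), respectively \(\ln^{\delta+2}(3/\xi)\). The decisive point is to split the sum at the value of \(i\) where the two available decay bounds for \(\nu\) coincide: this is precisely what replaces a \(\xi^{-1/2}\) factor by a polylogarithmic one under the additional hypothesis, while \(\delta\le2\) is exactly the condition keeping \(i\mapsto\theta(\log_{2}(4i+1))/(i+1)\) bounded, on which the exponential-in-\(b'(k)\) branch of the minima relies.
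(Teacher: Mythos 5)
Your proposal is correct and mirrors the paper's proof: verify A3, invoke Theorem~\ref{th:maintheta}, and then bound the two error terms by controlling $\overline{\nu}(0)$ via Proposition~\ref{pr:UpperBoundDelta} and $\overline{\nu}_{\theta}(\lfloor b'(k)/2\rfloor)$ by polylogarithmic and exponential functions of $\xi$. The paper factors the latter estimates through the auxiliary Propositions~\ref{pr:exponentialDecay} and~\ref{pr:exponentialGeomDecay} applied to $\omega(i)=\nu(i)\theta(\log_{2}(4i+1))\le\frac{4c}{\ln^{2}2}\ln^{\delta}(i+2)e^{-\xi i}$; your steps (i), (ii) and (iii) reproduce \eqref{eq:SjUpperBoundGeom}, \eqref{eq:S0upperBoundGen} and Proposition~\ref{pr:exponentialGeomDecay} inline, with the only cosmetic difference that in (iii) you split the sum at the crossover $I\sim\xi^{-1}\ln(1/\xi)$ of the two bounds on $\nu$ whereas the paper splits at $j=\lceil(\delta^{2}+2)\xi^{-2}\rceil$, chosen so that \eqref{eq:SjUpperBoundGeom} makes the tail contribute an absolute constant in one line.
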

The second term in the RHS of \eqref{eq:kVarHatfkBoundExpoNu} is of order \(1/\xi\), while the second term in the RHS of \eqref{eq:kVarHatfkBoundExpoGoemNu} has a logarithmic dependence on \(\xi\). Both terms can be made arbitrarily small by setting
\(b'(k)=\max(b(k),\lfloor \epsilon k\rfloor)\), with  \(\epsilon\in(0,1/2]\) sufficiently small.  For fixed \(q\), the last term in the RHS of \eqref{eq:kVarHatfkBoundExpoNu}  is  uniformly bounded by a term of order  \(1/\xi\), up to a polylogarithmic factor, while  the last term in the RHS of \eqref{eq:kVarHatfkBoundExpoGoemNu} is  uniformly bounded by a term with a logarithmic  dependence on \(\xi\).
When \(b'(k)\) is proportional to \(k\), both terms decrease exponentially with \(k\). 
\begin{remark} The results of Theorem~\ref{th:ObliviousPolynomialZeta} are still valid if the constraint  \(\delta\in(1,2]\) is replaced with  \(\delta\in(1,\delta_{0}]\), for any fixed \(\delta_{0}>1\), and  if \(A\) and \(A'\)  and the constant \(9\) in  \eqref{eq:A2T^kbound} are replaced with constants that depend on \(\delta_{0}\). 
\end{remark}
\subsection{A stratified unbiased  estimator}\label{sub:strat}
Given \(n,k\ge1\) and \(q\in(0,1)\), let
\begin{equation*}
\tilde f_{k,n}:=\tilde f_{k}+\tilde Z_{k},
\end{equation*} 
where \(\tilde f_{k}\) is the average of \(n\) independent copies of \(f_{k,0}\) and \(\tilde Z_{k}\) is the average of \(\lceil nq\rceil\) independent copies of \(Z_{k}\). The estimator  \(\tilde f_{k,n}\) is a stratified version of \(\hat f_{k}\) and has similar properties. By Lemma~\ref{le:ObliviousPl}, under Assumption 3, \begin{displaymath}
E(\tilde f_{k,n})=E(f_{k,0})+E(Z_{k})=\mu.
\end{displaymath}
Furthermore,
\begin{equation*}
\var(\tilde f_{k,n})=\frac{\var(f_{k,0})}{n}+\frac{\var(Z_{k})}{\lceil nq\rceil}.
\end{equation*}
On the other hand, it follows from the definition of   \(\hat f_{k}\) that 
\begin{eqnarray*}
\var(\hat f_{k})
&=&\var(f_{k,0})+q^{-2}\var(QZ'_{k})\\
&=&\var(f_{k,0})+q^{-1}E(Z_{k}^{2})-E(Z_{k})^{2}\\
&\ge&\var(f_{k,0})+q^{-1}\var(Z_{k}).
\end{eqnarray*}Thus, \(n\var(\tilde f_{k,n})\leq\var(\hat f_{k})\). The expected time to simulate \(\tilde f_{k,n}\) is \(\tilde T_{k,n}=nk+\lceil nq\rceil T_{k}\). Thus  \(\tilde T_{k,n}\le n\hat T_{k}+ T_{k}\) and  \(\tilde T_{k,n}\var(\tilde f_{k,n})\leq( \hat T_{k}+ T_{k}/n)\var(\hat f_{k})\).  Consequently, as \(n\) goes to infinity, the estimator \(\tilde f_{k,n}\) is asymptotically at least as efficient as  \(\hat f_{k}\).   
\subsection{Implementation details}\label{sub:implementationDetails}
\begin{algorithm}[H]
\caption{Procedure LongRun}
\label{alg:LR}
\begin{algorithmic}[1]
\Procedure{LR}{$B,K,X[0],\dots,X[h],S[0],\dots,S[h]$}
\For{\(j\gets 0,h\)} 
\State \(
S[j]\gets 0
\)
\EndFor
\For{\(i\gets 0,K-1\)} 
\State Simulate \(V\sim U_{0}\)
\For{\(j\gets 0,h\)} 
\If{\(i\geq B\)}
\State \(S[j]\gets S[j]+ f(X[j])\)
\EndIf
\State \(X[j]\gets g(X[j],U)\)
\EndFor
\EndFor
\EndProcedure
\end{algorithmic}
\end{algorithm}

\begin{algorithm}[H]
\caption{Procedure Bias}
\label{alg:Bias}
\begin{algorithmic}[1]
\Procedure{BIAS}{$k,b'(k),(p_{l},l\geq0)$}
\State Simulate a random variable \(N\) such that \(\Pr(N=l)=p_{l}\) for \(l\in\mathbb{N}\)
\State \(X[0]\gets X_{0}\)
\State{LR}($0,k2^{N},X[0],S[0]$)
\State \(X[1]\gets X_{0}\)
\State{LR}(\(k(2^{N}-1)+b'(k),k2^{N},X[0],X[1],S[0],S[1]\))
\State \Return{\(\frac{1}{p_{N}(k-b'(k))}(S[1]-S[0])\)} 
\EndProcedure
\end{algorithmic}
\end{algorithm}

\begin{algorithm}[H]
\caption{Procedure UnbiasedLongRun}
\label{alg:RHAlk}
\begin{algorithmic}[1]
\Procedure{ULR}{$k,b'(k),(p_{l},l\geq0),q$}
\State \(X[0]\gets X_{0}\)
\State{LR}($b'(k),k,X[0],S[0]$)
\State $f_{k,0}\gets\frac{1}{k-b'(k)}S[0]$
\State Sample \(W\) uniformly from \([0,1]\)
\If{$W>q$} 
\State \Return $f_{k,0}$
\Else
\State \Return{\(f_{k,0}-\frac{1}{q}{\text{Bias}} (k,b'(k),(p_{l},l\geq0))\)} \EndIf
\EndProcedure
\end{algorithmic}
\end{algorithm}

\begin{algorithm}[H]
\caption{Procedure StratifiedUnbiasedLongRun}
\label{alg:SULR}
\begin{algorithmic}[1]
\Procedure{SULR}{$n,k,b'(k),(p_{l},l\geq0),q$}
\State $S'\gets0$
\For{\(i\gets1,n\)}
\State \(X[0]\gets X_{0}\)
\State{LR}($b'(k),k,X[0],S[0]$)
\State $S'\gets S'+\frac{1}{k-b'(k)}S[0]$
\EndFor
\State $S''\gets0$
\For{\(i\gets1,\lceil nq\rceil\)}
\State $S''\gets S''+{\text{Bias}}(k,b'(k),(p_{l},l\geq0))$
\EndFor
\State \Return{\(S'/n-S''/\lceil nq\rceil\)} 
\EndProcedure
\end{algorithmic}
\end{algorithm}
Algorithm~\ref{alg:LR} assumes that \(B\) and \(K\) are integers with \(0\leq B<K\). The arguments \(X[0],\dots,X[h]\) and \(S[0],\dots,S[h]\) of LR are real numbers passed by reference, i.e., modifications made to these arguments
in LR have effect in any procedure that calls LR. For \(0\leq j\leq h\), denote by \(X_{0}[j],\dots,X_{K}[j]\) the successive values of \(X[j]\) during the execution of Algorithm~\ref{alg:LR}, where   \(X_{0}[j]\) is the value of  \(X[j]\) at the beginning of LR. It is assumed that the \(V\)'s generated in LR and  \((X_{0}[0],\dots,X_{0}[h])\) are independent random variables.

Under Assumption A3, Lemma~\ref{le:ObliviousPl} shows that   \(E(Z_{k}^{(b'(k))})=\mu-E(f_{k,0})\) for  \(k\geq1\). Thus \(-Z_{k}^{(b'(k))}\) is an unbiased estimator of the bias of \(f_{k,0}\).  Algorithm~\ref{alg:Bias}   
provides a detailed implementation of \(-Z_{k}^{(b'(k))}\) based on \eqref{eq:fklDef} and \eqref{eq:ZkDef} and on  the procedure LR. Algorithm~\ref{alg:RHAlk} gives a detailed implementation for  \(\hat f_{k}\) based on LR and on the procedure BIAS in  Algorithm~\ref{alg:Bias}.   Proposition~\ref{pr:implementationDetails} shows that  the outputs of Algorithms~\ref{alg:Bias}   
 and~\ref{alg:RHAlk} are consistent with the definitions of \(Z_{k}^{(b'(k))}\) and of  \(\hat f_{k}\).     
\begin{proposition}\label{pr:implementationDetails}The random variable output by the procedure BIAS (resp. ULR) has the same distribution as  \(-Z_{k}^{(b'(k))}\) (resp. \(\hat f_{k}\)).  
\end{proposition}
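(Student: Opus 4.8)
The plan is to prove Proposition~\ref{pr:implementationDetails} by carefully tracking the random variables produced inside the procedures and matching them, term by term, against the definitions~\eqref{eq:fklDef}, \eqref{eq:ZkDef} and~\eqref{eq:defHatfk}. The backbone of the argument is a \emph{correctness lemma for LR}: I would first establish that if LR is invoked as \textsc{LR}$(B,K,X[0],\dots,X[h],S[0],\dots,S[h])$ with initial state values $X_{0}[0],\dots,X_{0}[h]$, and if $V_{0},\dots,V_{K-1}$ denote the i.i.d.\ copies of $U_{0}$ generated in the loop, then upon termination $X[j]=G_{K}(X_{0}[j];V_{0},\dots,V_{K-1})$ and $S[j]=\sum_{i=B}^{K-1} f\big(G_{i}(X_{0}[j];V_{0},\dots,V_{i-1})\big)$ for each $j$. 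This is an immediate induction on the loop index $i$ using the recursion $G_{i+1}(x;u_{0},\dots,u_{i})=g(G_{i}(x;u_{0},\dots,u_{i-1}),u_{i})$ together with the definition of $g$; the only thing to be careful about is that all $h+1$ coordinates are advanced by the \emph{same} $V$ at each step, which is exactly the structure needed to realize the coupling in~\eqref{XimDefinition}.

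Next I would handle the procedure BIAS. Conditioning on $N=l$, the first call \textsc{LR}$(0,k2^{l},X[0],S[0])$ with $X[0]=X_{0}$ produces, by the correctness lemma, $S[0]=\sum_{i=0}^{k2^{l}-1}f\big(G_{i}(X_{0};V_{0},\dots,V_{i-1})\big)$; since $X_{i}=G_{i}(X_{0};U_{0},\dots,U_{i-1})$, this realizes a time-average over a chain of length $k2^{l}$. The second call uses burn-in $B'=k(2^{l}-1)+b'(k)$ and length $K'=k2^{l+1}$; I want to identify the resulting $S[1]$ with $(k-b'(k))f_{k,l+1}$ and simultaneously re-identify $S[0]$, after the second call overwrites it, with $(k-b'(k))f_{k,l}$. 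The key observation is the index shift: with $m=k(2^{l}-1)$, the summand of $f_{k,l}$ in~\eqref{eq:fklDef} is $f(X_{i,m})=f\big(G_{i+m}(X_{0};U_{-m},\dots,U_{i-1})\big)$, i.e.\ the value of a chain of length $k2^{l}$ at the last $i+? $ steps; matching the burn-in offset $k(2^{l}-1)+b'(k)$ and the total length $k2^{l+1}$ in the second LR call to the ranges $[b'(k),k-1]$ appearing in $f_{k,l}$ and $f_{k,l+1}$ is the computational heart of the proof. Once the offsets are verified, the returned quantity $\frac{1}{p_{N}(k-b'(k))}(S[1]-S[0])$ equals $\frac{f_{k,N+1}-f_{k,N}}{p_{N}}=Z_{k}^{(b'(k))}$, so BIAS outputs $-Z_{k}^{(b'(k))}$; one also checks that $N$ in the algorithm is independent of the $V$'s, matching the independence of $N$ from $(U_{i})$ in~\eqref{eq:ZkDef}, and that the two LR calls use independent fresh randomness realizing the correct joint law.

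Finally, for ULR: the single call \textsc{LR}$(b'(k),k,X[0],S[0])$ with $X[0]=X_{0}$ gives $S[0]=\sum_{i=b'(k)}^{k-1}f(X_{i})$, hence $f_{k,0}$ as in~\eqref{eq:fk0def}; the uniform $W$ on $[0,1]$ with the test $W>q$ plays the role of the Bernoulli $Q$ with $\Pr(Q=1)=q$ in~\eqref{eq:defHatfk}, and the branch returns either $f_{k,0}$ or $f_{k,0}-q^{-1}\,\textsc{Bias}(\cdot)=f_{k,0}+q^{-1}Z'_{k}$, where the copy of $Z_{k}$ produced inside BIAS uses randomness independent of that used for $f_{k,0}$ and of $W$. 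This is precisely the law of $\hat f_{k}$. I expect the main obstacle to be purely bookkeeping: getting the burn-in and length arguments of the \emph{second} LR call in BIAS to line up exactly with the two consecutive levels $f_{k,l}$ and $f_{k,l+1}$, including the fact that advancing the two coordinates $X[0],X[1]$ in lockstep for $k2^{l+1}$ steps from $X_{0}$ reproduces the shared-randomness coupling $X_{i,k(2^{l}-1)}$ versus $X_{i,k(2^{l+1}-1)}$ demanded by~\eqref{XimDefinition}; no deep idea is needed, but the index arithmetic must be done without slips.
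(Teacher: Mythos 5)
Your overall plan is the paper's plan: establish by induction on the loop index a pathwise correctness lemma for \textsc{LR}, then match the index ranges of the two \textsc{LR} calls in \textsc{BIAS} against~\eqref{eq:fklDef}, and finally read off \textsc{ULR} from~\eqref{eq:fk0def} and~\eqref{eq:defHatfk}. The correctness lemma you state for \textsc{LR} is accurate, and you correctly identify that the lockstep advancement of $X[0]$ and $X[1]$ by the same $V$ at each step is what realises the coupling in~\eqref{XimDefinition}.

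However, there is a concrete slip in exactly the bookkeeping you flagged as the crux. After the first \textsc{LR} call in \textsc{BIAS}, $X[0]\sim X_{k2^{N}}$, i.e.\ $X[0]$ is seeded from the \emph{further-advanced} state and $X[1]$ from $X_{0}$. Consequently, in the second call it is $S[0]$ that accumulates the higher-level sum and $S[1]$ the lower-level one: $\tfrac{1}{k-b'(k)}(S[0],S[1])\sim(f_{k,N+1},f_{k,N})$, not the other way around. You write the reverse identification ($S[1]\leftrightarrow f_{k,l+1}$, $S[0]\leftrightarrow f_{k,l}$), which then yields ``the returned quantity equals $Z_{k}^{(b'(k))}$'' followed immediately by ``so \textsc{BIAS} outputs $-Z_{k}^{(b'(k))}$'' --- a non sequitur. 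With the correct identification the returned value is $\tfrac{1}{p_{N}(k-b'(k))}(S[1]-S[0])\sim(f_{k,N}-f_{k,N+1})/p_{N}=-Z_{k}^{(b'(k))}$, which matches the claim and feeds into \textsc{ULR} via $f_{k,0}-q^{-1}\textsc{Bias}(\cdots)\sim f_{k,0}+q^{-1}Z'_{k}$. A smaller notational inaccuracy: the second \textsc{LR} call has $K=k2^{N}$, not $k2^{N+1}$; what equals $k2^{N+1}$ is the \emph{total} number of $g$-steps that $X[0]$ has undergone counting the first call, which is why $S[0]$ tracks level $N+1$. Worth also noting that the final identification is only in distribution (the $V$'s in \textsc{LR} are fresh copies, not the ambient $U$'s), which the paper tracks carefully via $\sim$ and a joint-law induction; your pathwise statement of the \textsc{LR} lemma is fine, but the step from there to $(f_{k,N+1},f_{k,N})$ is a distributional equality, not an a.s.\ one.
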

Note that the procedure BIAS can be used to estimate the bias of \(f_{k}\) by setting \(b'(k)=b(k)\). The procedure SULR in Algorithm~\ref{alg:SULR} provides an implementation of \(\tilde f_{k,n}\).

\section{Examples}\label{se:examples}
\subsection{GARCH volatility model}\label{sub:GARCH}
In the  GARCH(1,1) volatility model (see~\citep[Ch. 23]{Hull14}), the daily volatility  \(\sigma_{i}\)  of an index or exchange rate, calculated at the end of day  \(i\), satisfies the following recursion:
\begin{equation*}
{\sigma_{i+1}}^{2}= w +\alpha{\sigma_{i}}^{2}U_{i}^{2}+\beta {\sigma_{i}}^{2},
\end{equation*}
 \(i\geq0\), where \( w \), \(\alpha\) and \(\beta\) are positive constants with \(\alpha+\beta<1\), and \((U_{i}, i\geq0)\) are independent standard Gaussian random variables.  At the end of day \(0\), given  \(\sigma_{0}\geq0\) and a real number \(z\), we want to estimate \(\lim_{i\rightarrow\infty}\Pr(\sigma_{i}^{2}> z)\), if such a limit exists. In this example, \(F=F'=\mathbb{R}\), with \(X_{i}=\sigma_{i}^{2}\) and \(g(x,u)=w +\alpha xu^{2}+\beta x\), and   \(f(u)={\bf1}\{u> z\}\) for \(u\in\mathbb{R}\). The proof of \cite[Proposition 9]{kahaRandomizedDimensionReduction20} implies \eqref{eq:rhoDefx} with \(x=0\) and 
\(\nu'(i)= c(\alpha+\beta)^{i/2}\) for \(i\geq0\), for some constant \(c\). Thus,
 Assumption A2 holds. 
 \subsection{\(GI/G/1\) queue}\label{sub:GIG1example}
 Consider  a  \(GI/G/1\) queue
 where   customers are served by a single server in order of arrival.  For \(n\geq0\), let \(A_{n}\), \(V_{n}\) and \(X_{n}\) be the arrival time, service time and waiting time (exclusive of service time) of customer \(n\).    For \(n\geq0\), define the interarrival time  \(D_{n}:=A_{n+1}-A_{n}\). Assume that the system starts empty at time \(0\), that   \((D_{n},V_{n})\), \(n\geq0\), are identically distributed, and that the random variables \(\{D_{n},V_{n},n\geq0\}\) are independent.
The waiting times satisfy the Lindley recursion \cite[p.1]{asmussenGlynn2007} \begin{equation*}
X_{i+1}=\max(0,X_i+U_{i}),
\end{equation*}where \(U_{i}:=V_i-D_{i}\) for \(i\geq0\), with \(X_{0}=0\). We want to estimate \(\lim_{i\rightarrow\infty}E(X_{i})\), if such a limit exists. Here, we have \(F=F'=\mathbb{R}\), with \(g(x,u)=\max(0,x+u)\), and   \(f\) is the identity function. 
Proposition~\ref{pr:GG1} below shows that Assumption A2 holds under suitable  conditions. 
\begin{proposition}\label{pr:GG1}If  there are constants \(\gamma>0\) and \(\eta<1\) such that
\begin{equation}\label{eq:momentCondGD1}
E(e^{\gamma U_{i}})\leq\eta
\end{equation}  for \(i\ge0\), then Assumption A1 holds when   \(f\) is the identity function, with  \(\nu(i)= \gamma'\eta^{i}\)  for \(i\ge0\), where \(\gamma'\) is a constant. \end{proposition}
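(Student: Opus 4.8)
The plan is to verify the hypotheses of Proposition~\ref{pr:contractive1} with the Euclidean metric $\rho(x,x')=|x-x'|$, or — more directly — to establish condition~\eqref{eq:rhoDefx} of Proposition~\ref{pr:relax} with $x=0$, which in this example makes $G_i(0;U_0,\dots,U_{i-1})=X_i$ anyway. The key structural fact about the Lindley recursion is the classical representation of the waiting time as a maximum of partial sums: running $g(x,u)=\max(0,x+u)$ for $i$ steps starting from $x$ gives, for the chain $(X_{j,m},j\ge -m)$ started at $X_{-m,m}=0$ and driven by $(U_j)$,
\begin{equation*}
X_{i,m}=\max\Bigl(0,\ \max_{-m\le j\le i-1}\sum_{l=j}^{i-1}U_l\Bigr),
\end{equation*}
whereas $X_i=X_{i,0}=\max\bigl(0,\max_{0\le j\le i-1}\sum_{l=j}^{i-1}U_l\bigr)$. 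Since the two maxima agree on the index range $0\le j\le i-1$, the difference $X_{i,m}-X_i$ is nonnegative and is bounded above by the contribution of the extra indices, namely $\max_{-m\le j\le -1}(\sum_{l=j}^{-1}U_l + X_i) $ — more cleanly, $0\le X_{i,m}-X_i\le \max(0,\max_{-m\le j\le -1}\sum_{l=j}^{i-1}U_l - X_i)^+\le (\max_{-m\le j\le -1}\sum_{l=j}^{-1}U_l)^+$. So the discrepancy is controlled by a one-sided maximum of partial sums of the $U_l$'s that lie strictly before time $0$, and crucially this bound does not grow with $m$ in expectation.

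The main estimate is therefore a tail/moment bound on $M:=\max_{j\ge 1}\sum_{l=1}^{j}U_{-l}$ (extending the max to all $j\ge1$ only enlarges it). This is where the moment condition~\eqref{eq:momentCondGD1} enters: $E(e^{\gamma U_i})\le\eta<1$ makes $(e^{\gamma S_j})_{j\ge0}$, with $S_j=\sum_{l=1}^j U_{-l}$, a nonnegative supermartingale, so by Doob's maximal inequality $\Pr(M\ge t)=\Pr(\sup_j S_j\ge t)\le e^{-\gamma t}$. Consequently $E(e^{\gamma M})<\infty$ and in particular $E(M^2)$ is finite, say $E(M^2)\le \gamma''$ for an absolute constant times a function of $\gamma,\eta$. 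But we need more: we need the bound on $E((X_{i,m}-X_i)^2)$ to decay geometrically in $i$. For that I refine the bound above: when $i$ is large, for $X_{i,m}-X_i$ to be nonzero the extra prefix indices must beat the maximum over $0\le j\le i-1$, which in particular requires $\sum_{l=j}^{i-1}U_l\ge 0$ for some $j\le -1$; writing $\sum_{l=j}^{i-1}U_l=\sum_{l=j}^{-1}U_l+\sum_{l=0}^{i-1}U_l$ and using that the second block has $E(e^{\gamma\sum_{l=0}^{i-1}U_l})\le\eta^i$, a union bound over $j$ gives $\Pr(X_{i,m}\ne X_i)\le \eta^i\sum_{j\ge1}E(e^{\gamma\sum_{l=1}^j U_{-l}})\cdot(\text{const})\le C\eta^i$. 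Combining this with the uniform (in $m,i$) exponential-moment control $E((X_{i,m}-X_i)^{2}\mathbf 1\{X_{i,m}\ne X_i\})\le \sqrt{E((X_{i,m}-X_i)^4)}\sqrt{\Pr(X_{i,m}\ne X_i)}$ and a Cauchy–Schwarz step yields $E((X_{i,m}-X_i)^2)\le \gamma'\eta^{i/2}$; absorbing the square root into the base by replacing $\eta$ with $\sqrt\eta<1$ (or just keeping $\nu(i)=\gamma'\eta^{i/2}$, which still satisfies A2 since we only need \emph{some} exponential rate $\le1$), and checking the summability~\eqref{eq:infiniteSumAssumption} — which is automatic for a geometric $\nu$ — completes the verification that A1 (hence A2) holds.

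The hard part will be making the decay in $i$ rigorous: the crude bound $0\le X_{i,m}-X_i\le M$ is uniform but does \emph{not} decay, so one genuinely needs the observation that a nonzero discrepancy forces the recent block $\sum_{l=0}^{i-1}U_l$ to be atypically large (nonnegative, despite negative drift), which is an $e^{-\Theta(i)}$ event under~\eqref{eq:momentCondGD1}. Getting the union bound over the infinitely many prefix indices $j$ to converge — it does, because $E(e^{\gamma\sum_{l=1}^jU_{-l}})\le\eta^j$ is summable — and then balancing the two factors in Cauchy–Schwarz to land on a clean geometric rate is the only real calculation; everything else (the Lindley maximum representation, Doob's inequality, and the reduction via Proposition~\ref{pr:relax} or Proposition~\ref{pr:contractive1}) is standard. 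An alternative, possibly cleaner route avoiding the explicit max representation is to verify~\eqref{eq:distanceContractive} directly: $E(\rho^2(g(x,U_0),g(x',U_0)))=E((\max(0,x+U_0)-\max(0,x'+U_0))^2)\le (x-x')^2$ always (the map $t\mapsto\max(0,t)$ is $1$-Lipschitz), but this gives $\eta=1$, not $\eta<1$, so contractivity fails on the nose and one must exploit the drift — which again brings us back to the partial-sum argument above. I would present the direct argument via Proposition~\ref{pr:relax}.
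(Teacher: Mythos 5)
Your approach is essentially the paper's: the paper omits the proof of Proposition~\ref{pr:GG1}, referring to Proposition~10 of \cite{kahaRandomizedDimensionReduction20}, but the sibling Proposition~\ref{pr:GG1Binary} in the appendix follows exactly the blueprint you describe --- Lindley max-of-partial-sums representation, comparison of the chain started at time $0$ with the chain started $m$ steps earlier, and the observation that a discrepancy forces a long-range partial sum of the $U_l$'s to be nonnegative despite negative drift, an exponentially unlikely event under \eqref{eq:momentCondGD1}. (You work directly with $(X_{i,m},X_i)$ rather than time-reversing to $(X_{i+m},X_{i+m,-m})$ via Proposition~\ref{pr:shiftedPair}, but by that proposition the two are the same pair in distribution.) The one quantitative slack, which you flag yourself, is that the Cauchy--Schwarz step $E[(X_{i,m}-X_i)^2]\le\sqrt{E[(X_{i,m}-X_i)^4]}\sqrt{\Pr(X_{i,m}\ne X_i)}$ delivers $\nu(i)=\gamma'\eta^{i/2}$ rather than the stated $\gamma'\eta^i$; this is harmless for A1/A2 but does not reproduce the proposition verbatim. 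To get the stated rate, drop Cauchy--Schwarz and integrate the tail directly: with $M:=\sup_{j\ge1}\sum_{l=1}^jU_{-l}$ and $m_+:=S_i-X_i=\min_{0\le j\le i}S_j\le0$ independent, one has $X_{i,m}-X_i\le(M+m_+)_+$, and
\begin{equation*}
E\bigl[(M+m_+)_+^2\bigr]=\int_0^\infty 2s\,\Pr(M>s-m_+)\,ds\le\int_0^\infty 2s\,e^{-\gamma s}\,ds\cdot E\bigl[e^{\gamma m_+}\bigr]\le\frac{2}{\gamma^2}\,\eta^i,
\end{equation*}
using Doob's bound $\Pr(M\ge t)\le e^{-\gamma t}$ for $t\ge0$ and $E[e^{\gamma m_+}]\le E[e^{\gamma S_i}]\le\eta^i$. (Equivalently, in the time-reversed picture one can bound the squared max by a sum of squared positive parts and use $E[(S_n)_+^2]\le 2\eta^n/\gamma^2$, mirroring the union bound in the proof of Proposition~\ref{pr:GG1Binary}.) Your observation that the naive contractivity route fails because $\max(0,\cdot)$ is only $1$-Lipschitz, so \eqref{eq:distanceContractive} gives $\eta=1$, is also correct and explains why the partial-sum/drift argument is needed.
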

 The proof of Proposition~\ref{pr:GG1} is very similar to that of \cite[Proposition 10]{kahaRandomizedDimensionReduction20}, and is omitted. The condition  \eqref{eq:momentCondGD1} is related to the stability condition \(E(U_{i}<0)\), and is justified in \cite{kahaRandomizedDimensionReduction20}.  

Our approach can also  estimate \(\lim_{i\rightarrow\infty}\Pr(X_{i}> z)\), where  \(z\) is a real number, under suitable conditions. In this case, \(F\), \(F'\) and \(g\) are the same as above, and   \(f(u)={\bf1}\{u> z\}\) for \(u\in\mathbb{R}\).
 
\begin{proposition}\label{pr:GG1Binary}If \(E(U_{i}<0)\) and \(E(U_{i}^{6})\) is finite, then Assumption A1 holds when   \(f(u)={\bf1}\{u> z\},\)  with  \(\nu(i)=c(i+1)^{-2}\)  for \(i\ge0\), where \(c\) is a constant. \end{proposition}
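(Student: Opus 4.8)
The plan is to invoke Proposition~\ref{pr:relax}: it suffices to exhibit a point $x\in F$, here the natural choice $x=0$, and a decreasing sequence $\nu'(i)=c'(i+1)^{-2}$ satisfying \eqref{eq:infiniteSumAssumption} — which it does, since $\sum_i\sqrt{\nu'(i)/(i+1)}=\sqrt{c'}\sum_i(i+1)^{-3/2}<\infty$ — and such that \eqref{eq:rhoDefx} holds, i.e. $E((f(G_i(0;U_0,\dots,U_{i-1}))-f(X_{i,m}))^2)\le c'(i+1)^{-2}$ for all $i,m\ge0$. Write $W_i:=G_i(0;U_0,\dots,U_{i-1})$ for the Lindley recursion started empty at time $0$ and driven by $U_0,\dots,U_{i-1}$, and $W_{i,m}:=X_{i,m}$ for the copy started empty at time $-m$ and driven by $U_{-m},\dots,U_{i-1}$. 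Since $f$ is the indicator ${\bf1}\{\cdot>z\}$, we have $(f(W_i)-f(W_{i,m}))^2={\bf1}\{\text{exactly one of }W_i,W_{i,m}\text{ exceeds }z\}$, so the left-hand side of \eqref{eq:rhoDefx} equals $\Pr(W_i>z\ge W_{i,m})+\Pr(W_{i,m}>z\ge W_i)$.

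The key structural fact is the classical representation of Lindley's recursion as a running maximum of a random walk with the increments read in reverse order: writing $S_j^{(i)}:=U_{i-1}+U_{i-2}+\cdots+U_{i-j}$ for the reversed partial sums ending at time $i$, one has $W_i=\max_{0\le j\le i}S_j^{(i)}$ and, for the copy started at $-m$, $W_{i,m}=\max_{0\le j\le i+m}S_j^{(i)}$ (with the convention $S_0^{(i)}=0$). Thus $W_{i,m}\ge W_i$ always, so $\Pr(W_{i,m}>z\ge W_i)$ is the only term that can be nonzero, and it is bounded by $\Pr(W_{i,m}\ne W_i)=\Pr(\max_{i<j\le i+m}S_j^{(i)}>\max_{0\le j\le i}S_j^{(i)})$. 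Because the $U$'s are i.i.d., $S_j^{(i)}$ for $j>i$ splits as $S_i^{(i)}+S_{j-i}'$ where $S'$ is an independent walk with the same step distribution; since $E(U)<0$, the event that this over-shoots the maximum of the first segment forces, in particular, that $S_i^{(i)}=U_{i-1}+\cdots+U_0$ is "too large", namely at least as large as $\max_{i<j}S_j^{(i)}-\sup_{j\ge1}S_j'$; one then controls $\Pr(W_{i,m}\ne W_i)$ by $\Pr(\max_{j\ge1}(S_i^{(i)}+S_j')>W_i)$ and a first-moment / Markov argument. The cleanest route is: $\Pr(W_{i,m}\ne W_i)\le\Pr\big(S_i^{(i)}+M'>W_i\big)$ where $M':=\sup_{j\ge1}S_j'$ is a.s. finite (drift $<0$), and then to split according to whether $W_i$ is small: $\Pr(W_i\le a)$ for fixed $a$ is bounded away from $0$ only loosely, so instead bound $\Pr(S_i^{(i)}>-a)$ directly. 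Since $E(U)=:-\delta<0$ and $E(U^6)<\infty$, the walk $S_i^{(i)}\sim \sum_{k=0}^{i-1}U_k$ has mean $-i\delta$ and, by the Marcinkiewicz–Zygmund / Rosenthal inequality, a sixth central moment of order $i^3$; hence for $a=i\delta/2$, $\Pr(S_i^{(i)}>-a)=\Pr(S_i^{(i)}-E(S_i^{(i)})>i\delta/2)\le C\,i^3/(i\delta/2)^6=O(i^{-3})$. Combining with $\Pr(W_i<i\delta/2\text{ fails})$ handled symmetrically — more precisely, bounding $\Pr(W_{i,m}\ne W_i)\le\Pr(S_i^{(i)}>-a)+\Pr(M'+a\ge W_i,\ S_i^{(i)}\le -a)$ and noting the second term is $\le\Pr(M'\ge W_i-a)$ which, since $W_i\stackrel{d}{\to}$ the stationary waiting time and $M'$ has a light upper tail, is also $O(i^{-2})$ once $a$ grows linearly — yields $\Pr(W_{i,m}\ne W_i)=O(i^{-2})$ uniformly in $m$, which is exactly \eqref{eq:rhoDefx} with $\nu'(i)=c'(i+1)^{-2}$.

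The main obstacle is making the last bound uniform in $m$ and getting the rate $(i+1)^{-2}$ rather than something weaker: the naive bound $\Pr(W_{i,m}\ne W_i)\le\Pr(M'\ge W_i-S_i^{(i)})$ must be turned into a genuine polynomial decay, and this is where the sixth-moment hypothesis enters — it is precisely strong enough to give the $i^{-2}$ (indeed $i^{-3}$) tail bound on the random walk via a Rosenthal-type inequality, while $E(U)<0$ guarantees $M'$ is proper and that the comparison random walk drifts the right way. I would therefore structure the proof as: (i) the reversed-walk representation and the inequality $W_{i,m}\ge W_i$; (ii) the reduction of the LHS of \eqref{eq:rhoDefx} to $\Pr(W_{i,m}\ne W_i)$; (iii) the decomposition using an independent copy $S'$ and its supremum $M'$; (iv) the quantitative tail estimate on $\sum_{k<i}U_k$ from $E(U^6)<\infty$ and $E(U)<0$; and (v) assembling these into $\nu'(i)=O((i+1)^{-2})$ and invoking Proposition~\ref{pr:relax}. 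Routine moment computations and the explicit constant $c$ I would relegate to the appendix.
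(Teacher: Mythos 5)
Your reduction to \(\Pr(W_{i,m}\neq W_i)\) via the reversed-walk representation and the observation \(W_{i,m}\ge W_i\) is sound, and invoking Proposition~\ref{pr:relax} with \(x=0\) is harmless (though redundant here, since the chain already starts at \(0\) so \eqref{eq:rhoDefx} is \eqref{eq:rhoDef} verbatim; the paper instead applies Proposition~\ref{pr:shiftedPair} directly). The Rosenthal/Marcinkiewicz--Zygmund sixth-moment idea is also the right tool. However, the decomposition you build around \(M':=\sup_{j\ge1}S_j'\) has a concrete error. After splitting on \(\{S_i^{(i)}\le -a\}\), you bound the remaining term by \(\Pr(M'\ge W_i - a)\) and claim it is \(O(i^{-2})\) for \(a\) linear in \(i\), on the grounds that \(W_i\) converges in distribution and \(M'\) has a light tail. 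This is false: \(W_i\) is stochastically bounded while \(a\to\infty\), so \(W_i-a\overset{P}{\to}-\infty\) and \(\Pr(M'\ge W_i-a)\to 1\), not \(0\) (recall \(M'\ge 0\)). The quantity you actually need to compare \(M'\) against is \(W_i - S_i^{(i)}\), which equals \(-\min_{0\le n\le i}S_n\) and does grow to infinity; on \(\{S_i^{(i)}\le -a\}\) you have \(W_i-S_i^{(i)}\ge a\) since \(W_i\ge 0\), giving the correct bound \(\Pr(M'>a)\). But even after this fix, controlling \(\Pr(M'>a)\) requires a moment bound on \(M'\) (e.g., Kiefer--Wolfowitz: \(E(U^6)<\infty\) gives \(E(M'^5)<\infty\)), which is a nontrivial auxiliary lemma you do not supply.

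The paper avoids \(M'\) entirely and is more elementary: on the event \(W_{i,m}\neq W_i\) one of the ``new'' partial sums must exceed \(0\), so a union bound gives \(\Pr(W_{i,m}\neq W_i)\le\sum_{j=1}^{m}\Pr(S_{i+j}>0)\); each term is \(O((i+j)^{-3})\) by the sixth-moment Markov estimate, and summing over \(j\ge1\) yields \(O((i+1)^{-2})\) uniformly in \(m\). That route needs no tail bound on \(M'\) and closes the argument in a few lines. Your \(M'\)-based approach, once corrected, would actually give a better rate \(O(i^{-3})\), but at the cost of importing the moment bound on \(M'\), so it is a genuinely different (and heavier) decomposition than the paper's.
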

\subsection{High-dimensional Gaussian vectors}
Let \(V\)  be a \(d\times d\) positive definite matrix with all diagonal entries equal to \(1\). The standard algorithm to generate a Gaussian vector with covariance matrix \(V\) is based on  the Cholesky decomposition, that takes \(O(d^{3})\)  time.  \citet{kahaleGaussian2019} describes an alternative method that approximately simulates a centered
\(d\)-dimensional Gaussian vector \(X\) with covariance matrix \(V\). Let \(j\) be a random integer uniformly distributed 
 in \(\{1,\ldots ,d\}\),  and let \(e\) be  the  \(d\)-dimensional  random column vector whose
\(j\)-th coordinate is \(1\) and remaining coordinates are \(0 \). Let \((e_{i},i\geq0)\) be a sequence of independent copies of \(e\), and let   \((g_{i},i\geq0)\)
 be a sequence of independent standard Gaussian random variables, independent of \((e_{i},i\geq0)\).   Define the Markov chain of \(d\)-dimensional column vectors  \((X_{i},i\ge0)\)
as follows. Let  \(X_{0}=0\) and, for   \(i\geq0\),  let
 \begin{equation*}
X_{i+1}=X_{i}+(g_{i}- e_{{i}}^{T}X_{i})(V  e_{{i}}).
\end{equation*} In this example, \(F\) is the set of   \(d\)-dimensional  column vectors, \(F'=\mathbb{R}\times F\), with \(U_{i}=(g_{i},e_{i})\) and \(g(x,g',e')=x+(g'-e'^{T}x)(Ve')\) for \((g',e')\in F'\).

 \begin{theorem}\label{th:Gaussian}Let \(\hat \kappa\) and \(\hat \gamma\) be two positive constants with  \(\hat \gamma\le1\). Consider a real-valued Borel function \(f\) of \(d\) variables such that\begin{equation}\label{eq:defkappaLipsG}
E((f(X)-f(X'))^{2})\leq \hat\kappa^{2}( E(||X-X'||^{2}))^{\hat\gamma}
\end{equation}
for any centered Gaussian column vector \(\begin{pmatrix}X \\
X' \\
\end{pmatrix}\)   with     \(\cov(X)\le V\) and
  \(\cov(X')\le V\), where \(X\) and \(X'\) have dimension \(d\).
Then Assumption  A1 holds for the function \(f\), with  \begin{equation*}
\nu(i)= \hat\kappa^{2}\min((\lambda_{\max}d)^{\hat\gamma}(1-\frac{\lambda_{\min}}{d})^{\hat\gamma i}, (\frac{d^{2}}{i+1})^{\hat\gamma}),
\end{equation*} \(i\geq0\), where \(\lambda_{\max}\) (resp. \(\lambda_{\min}\)) is the largest (resp. smallest) eigenvalue of \(V\).  \end{theorem}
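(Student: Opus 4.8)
Since~\eqref{eq:defkappaLipsG} only constrains \emph{Gaussian} pairs, the first step is to place \((X_{i,m},X_{i})\) in a Gaussian frame and the second is to bound \(E(\|X_{i,m}-X_{i}\|^{2})\). Writing the recursion as \(X_{i+1}=(I-Ve_{i}e_{i}^{T})X_{i}+g_{i}Ve_{i}\), I would condition on the \(\sigma\)-field \(\mathcal E\) generated by \((e_{l},l\in\mathbb Z)\): given \(\mathcal E\), both \(X_{i}\) and \(X_{i,m}\) are linear (recall \(X_{0}=0\)) in the i.i.d.\ standard Gaussians \((g_{l})\), so \((X_{i,m},X_{i})\) is centered Gaussian given \(\mathcal E\). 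Since \(e_{j}^{T}Ve_{j}=V_{jj}=1\), the identity \((I-Vee^{T})V(I-ee^{T}V)=V-Vee^{T}V\) gives, by a short induction, \(\cov(X_{i}\mid\mathcal E)\preceq V\) and \(\cov(X_{i,m}\mid\mathcal E)\preceq V\); averaging over \(\mathcal E\) also yields \(\cov(X_{m})\preceq V\). Applying~\eqref{eq:defkappaLipsG} conditionally, taking expectations, and invoking Jensen's inequality (concavity of \(t\mapsto t^{\hat\gamma}\)), I obtain
\[
E\bigl((f(X_{i,m})-f(X_{i}))^{2}\bigr)\le\hat\kappa^{2}\bigl(E(\|X_{i,m}-X_{i}\|^{2})\bigr)^{\hat\gamma},
\]
so it remains to show \(E(\|X_{i,m}-X_{i}\|^{2})\le\min\bigl(\lambda_{\max}d(1-\lambda_{\min}/d)^{i},\,d^{2}/(i+1)\bigr)\).

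Put \(\Delta_{i}:=X_{i,m}-X_{i}\). Subtracting the two recursions cancels the \(g_{i}\)-terms, so \(\Delta_{i+1}=M_{i}\Delta_{i}\) with \(M_{i}:=I-Ve_{i}e_{i}^{T}\), and \(\Delta_{0}=X_{0,m}\sim X_{m}\) is independent of \((e_{l},l\ge0)\). Since \(M_{i}\perp\Delta_{i}\), the matrices \(C_{i}:=\cov(\Delta_{i})\) obey \(C_{i}=\Phi^{i}(C_{0})\), where \(\Phi(C):=E[(I-Vee^{T})C(I-ee^{T}V)]\) is linear, preserves positive semidefiniteness and the Loewner order, and \(C_{0}=\cov(X_{m})\preceq V\). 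Two facts, both using \(e^{T}Ve=1\) and \(E[ee^{T}]=\tfrac1dI\), organise everything: the energy identity \(\tr(V^{-1}C_{i+1})=\tr(V^{-1}C_{i})-\tfrac1d\tr(C_{i})\) (coming from \((I-ee^{T}V)V^{-1}(I-Vee^{T})=V^{-1}-ee^{T}\)), and \(\Phi(V)=V-\tfrac1dV^{2}\preceq V\).

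For the exponential bound, combine the energy identity with \(\tr(C_{i})\ge\lambda_{\min}\tr(V^{-1}C_{i})\) to get \(\tr(V^{-1}C_{i+1})\le(1-\lambda_{\min}/d)\tr(V^{-1}C_{i})\); since \(\tr(V^{-1}C_{0})\le\tr(I)=d\), this gives \(\tr(V^{-1}C_{i})\le d(1-\lambda_{\min}/d)^{i}\) and hence \(E(\|\Delta_{i}\|^{2})=\tr(C_{i})\le\lambda_{\max}\tr(V^{-1}C_{i})\le\lambda_{\max}d(1-\lambda_{\min}/d)^{i}\). For the polynomial bound, set \(R_{i}:=\Phi^{i}(V)\): from \(\Phi(V)\preceq V\) and order-preservation, \((R_{i})\) decreases in the Loewner order, and \(C_{i}\preceq R_{i}\) because \(C_{0}\preceq V\). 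Applying the energy identity to \(R_{i}\) and telescoping, \(\tfrac1d\sum_{j=0}^{i}\tr(R_{j})=\tr(V^{-1}R_{0})-\tr(V^{-1}R_{i+1})\le\tr(I)=d\), so \(\sum_{j=0}^{i}\tr(R_{j})\le d^{2}\); since \(\tr(R_{j})\) is nonincreasing, \((i+1)\tr(R_{i})\le d^{2}\), and thus \(E(\|\Delta_{i}\|^{2})=\tr(C_{i})\le\tr(R_{i})\le d^{2}/(i+1)\). Taking the minimum of the two estimates and using \(\min(a,b)^{\hat\gamma}=\min(a^{\hat\gamma},b^{\hat\gamma})\) yields exactly the stated \(\nu(i)\), which is positive and decreasing; and~\eqref{eq:infiniteSumAssumption} holds since \(\sum_{i}\sqrt{\nu(i)/(i+1)}\) is dominated by the geometric series supplied by the exponential factor. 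Hence Assumption~A1 holds.

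The crux is the polynomial bound. The right move is to iterate \(\Phi\) on \(V\) rather than on \(C_{0}\): only then is the iterate monotone in the Loewner order, and pairing that monotonicity with the telescoped energy identity delivers the \(1/(i+1)\) rate with no dependence on \(\lambda_{\min}\). The conditioning in the first step is the other spot that needs care, since it is what legitimises invoking the Gaussian hypothesis~\eqref{eq:defkappaLipsG}.
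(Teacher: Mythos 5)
Your proof is correct, and for the polynomial bound it is a genuinely different route. For the exponential bound you and the paper do essentially the same thing: the paper applies Proposition~\ref{pr:contractive1} with the weighted metric \(\rho(x,x')=\|\sqrt{V^{-1}}(x-x')\|\), which amounts to showing that \(\tr(V^{-1}C_{i})\) contracts by a factor \(1-\lambda_{\min}/d\) per step, exactly your energy-identity argument on the covariance matrices. The real divergence is the \(d^{2}/(i+1)\) bound. The paper writes \(X_{i,m}-X_{i}=\sqrt{V}M_{i}\sqrt{V^{-1}}X_{0,m}\) with \(M_{i}=P_{i-1}\cdots P_{0}\), \(P_{n}=I-\sqrt{V}e_{n}e_{n}^{T}\sqrt{V}\), and then invokes \cite[Theorem 1]{kahaleGaussian2019}, a black-box bound \(E(\tr(M_{i}^{T}VM_{i}))\le d^{2}/(i+1)\). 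You instead iterate the covariance map \(\Phi(C)=E[(I-Vee^{T})C(I-ee^{T}V)]\) on \(V\) itself, and the two facts that (a) \(R_{i}:=\Phi^{i}(V)\) is Loewner-nonincreasing (which \(C_{i}=\Phi^{i}(C_{0})\) generally is not) and (b) the telescoped energy identity forces \(\tfrac1d\sum_{j\le i}\tr(R_{j})\le d\), give \(\tr(R_{i})\le d^{2}/(i+1)\); since \(C_{0}\preceq V\) and \(\Phi\) is order-preserving, \(\tr(C_{i})\le\tr(R_{i})\). A quick check shows \(R_{i}=\Phi^{i}(V)=E(\sqrt{V}M_{i}M_{i}^{T}\sqrt{V})\), so \(\tr(R_{i})=E(\tr(M_{i}^{T}VM_{i}))\): you have in effect supplied an independent, self-contained proof of the cited theorem. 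You also rederive, rather than cite \cite[Lemmas 1, 2]{kahaleGaussian2019}, the conditional joint Gaussianity and covariance bounds needed to trigger~\eqref{eq:defkappaLipsG}. The trade-off is a longer argument, but the payoff is that your proof needs no external results beyond the recursion itself.
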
 
Since \(\tr(V)=d\), we have \(\lambda_{\max}\leq d\). As \(1+x\leq e^{x}\) for \(x\in\mathbb{R}\),  Theorem~\ref{th:Gaussian} shows that Assumption A2 holds with \(c=\hat\kappa^{2}d^{2\hat\gamma}\) and \(\xi=\lambda_{\min}\hat\gamma/d\). By Theorem~\ref{th:biasedLongRun}, \(E(f(X_{h}))\) has a finite limit \(\mu\)  as \(h\) goes to infinity.  It follows from  \cite[Theorem~4]{kahaleGaussian2019} that \(\mu=E(f(X))\), where \(X\) is a \(d\)-dimensional column vector with \(X\sim N(0,V)\). 

Assume now that \(\hat\gamma=1\) and that  \(b(k)=0\).  Theorem~\ref{th:Gaussian} shows that   \(\nu(i)\le c/(i+1)\) for \(i\geq0\).  Combining \eqref{eq:thMSE} and~\eqref{eq:deltaBoundGeom} yields  \(kE((f_{k}-\mu )^{2})\le5096c\ln^{2}(2/\xi)\), which is weaker   than the bound \begin{equation*}
kE((f_{k}-\mu )^{2})\le18c
\end{equation*}
of \cite[Theorem~2]{kahaleGaussian2019} by a polylogarithmic factor. Note that Theorem~\ref{th:ObliviousPolynomialZeta} is applicable in this example.
\comment{
Suppose now that the \(p_{l}\)'s are given by \eqref{eq:obliviouspl},  with \(\theta(x)=\max(1,x)^{\delta}\) for \(x\geq0\), where \(\delta\in(1,2]\), and that \(q=(\delta-1)\epsilon/9\) and  \(b'(k)=\lfloor \epsilon'k\rfloor\), where \(\epsilon\in(0,1)\) and \(\epsilon'\in(0,1/2]\), with \(k\ge1\).
Theorem~\ref{th:ObliviousPolynomialZeta} implies that  \(\hat T_{k}\le k(1+\epsilon)\) and that~\eqref{eq:kVarHatfkBoundExpoGoemNu} holds.
}

\comment{\item This question does not use the same notation as in the paper. Suppose that $h$ is a $\mu$-strongly convex on \(\mathbb{R}^{d}\) and  $h=E(g)$, where $g$ is a random convex  \(L\)-smooth function. If \(x\) and \(y\) are in \(\mathbb{R}^{d}\), and \begin{displaymath}
x'=x-\alpha g'(x),
\end{displaymath} and
 \begin{displaymath}
y'=y-\alpha g'(y),
\end{displaymath}
where \(\alpha L \le 1/4\), say, can we prove that \begin{displaymath}
E(||x'-y'||^2)\le(1-\alpha\beta)||x-y||^2,
\end{displaymath}
for some \(\beta>0\) independent of \(\alpha\)? This property holds in the linear regression example. If it holds in general, this could be used to generalize the linear regression example.
}
\section{Numerical experiments}\label{se:NumerExper}
The codes in our simulation experiments were written in the C++ programming language. We assume that the \(p_{l}\)'s are determined as in Example \ref{ex:exponentialDist}, with \(\delta=1/2\). We set \(q=(3\sum^{\infty}_{l=0}2^{l}p_{l})^{-1}\).
By Lemma \ref{le:generalPl} and the discussion thereafter, for \(k\ge1\), we have \(\hat T_{k}\leq 2k\). Thus, on average, at most half of the running time of \(\hat f_{k}\) is devoted to the simulation of \(Z_{k}\).  Fig.~\ref{fig:GARCHMH1GG1} shows the standard deviation per replication and the absolute value of the bias of \(f_{k}\), i.e.,  \(|E(f_{k})-\mu|\),   estimated from \(10^{6}\) independent replications of \(f_{k}\) and of \(Z_{k}\), respectively, with   \(b(k)=b'(k)=\lfloor k/10\rfloor\). The absolute value of the bias is also reported in Tables \ref{tab:BiasGARCH0.1}, \ref{tab:BiasMH0.1} and \ref{tab:BiasGG0.1}. In these tables, ``Std'' and ``Cost'' refer to the standard deviation and running time of a single iteration of \(Z_{k}\), respectively.   Tables~\ref{tab:GARCH0.1}, \ref{tab:GARCH0.5}, \ref{tab:MH0.1}, \ref{tab:MH0.5}, \ref{tab:GG0.1} and~\ref{tab:GG0.5} compare the methods LR, ULR, and SULR with burn-in periods  \(b(k)=b'(k)=\lfloor k/10\rfloor\) and   \(b(k)=b'(k)=\lfloor k/2\rfloor\).  The methods LR and ULR were implemented with \(10^{6}\) independent replications, and the  method SULR was implemented with  parameter \(n=10^{6}\). A \(95\%\) confidence interval was calculated for \(\mu\) by the methods ULR and SULR. For the methods LR and ULR,   ``Std'' refers to  the standard deviation per replication, whereas   ``Std'' refers to the standard deviation of the output for the method SULR. For the method LR, the root mean square error ``RMSE'' is calculated using the formula \(\text{RMSE}=\sqrt{\text{Std}^{2}+\text{Bias}^{2}}\), where the  bias is estimated from \(10^{5}\) independent replications of \(Z_{k}\).   For the methods ULR and SULR,  RMSE = Std. For the methods LR and ULR, the variable ``Cost'' is the average number of times the Markov chain is simulated per replication, that is, the average number of calls to the function \(g\). For the methods SULR, the variable ``Cost'' is the total number  of calls to the function \(g\). Finally, the mean square error is \(\text{MSE}=\text{RMSE}^2\). In other words, the mean square error is equal to the sum of bias squared and variance. For a fixed computing budget, the mean square error is a standard measure of the performance of a biased estimator \cite{glasserman2004Monte}. Following \cite{GlynnRhee2015unbiased}, we measure the performance of a method through the product \(\text{Cost}\times\text{MSE}\): this product is low when the performance is high.
\begin{figure}
\centering
\begin{subfigure}[b]{0.3\textwidth}
\centering
\begin{tikzpicture}[scale=.5]
\begin{loglogaxis}[
    title={GARCH},
    xlabel={$k$},
    ylabel={},
    legend pos=south west,
    ymajorgrids=true,
    grid style=dashed,
]
 \addplot[
    color=magenta,
    mark=o,
    ]
    coordinates {
(25,0.368671)
(50,0.287042)
(100,0.164856)
(200,0.0676364)
(400,0.0188021)
(800,0.00272414)
(1600,0.000115529)
(3200,4.38194e-007)
  };
 \addplot[
    color=red,
    mark=square,
    ]
    coordinates {
(25,0.0986092)
(50,0.180494)
(100,0.222882)
(200,0.207181)
(400,0.16306)
(800,0.119451)
(1600,0.0855771)
(3200,0.0607933)
};
\legend{\(\text{Bias}\), \(\text{Std}\)}
\end{loglogaxis}
\end{tikzpicture}
\label{fig:sonarlambda1}
\end{subfigure}
\begin{subfigure}[b]{0.3\textwidth}
\centering
\begin{tikzpicture}[scale=.5]
\begin{loglogaxis}[
    title={\(M/H_{k}/1\) queue},
    xlabel={$k$},
    ylabel={},
    legend pos=south west,
    ymajorgrids=true,
    grid style=dashed,
]
 \addplot[
    color=magenta,
    mark=o,
    ]
    coordinates {
(25,4.41452)
(50,3.15822)
(100,1.95343)
(200,0.958234)
(400,0.364616)
(800,0.102446)
(1600,0.017811)
(3200,0.00131372)
  };
 \addplot[
    color=red,
    mark=square,
    ]
    coordinates {
(25,4.01376)
(50,4.93301)
(100,5.59125)
(200,5.75474)
(400,5.23332)
(800,4.21438)
(1600,3.13942)
(3200,2.2548)
};
\legend{\(\text{Bias}\), \(\text{Std}\)}
\end{loglogaxis}
\end{tikzpicture}
\label{fig:madelonLambda1}
\end{subfigure}
\begin{subfigure}[b]{0.3\textwidth}
\centering
\begin{tikzpicture}[scale=.5]
\begin{loglogaxis}[
    title={\(GI/G/1\) queue},
    xlabel={$k$},
    ylabel={},
    legend pos=south west,
    ymajorgrids=true,
    grid style=dashed,
]
 \addplot[
    color=magenta,
    mark=o,
    ]
    coordinates {
(25,0.206705)
(50,0.131739)
(100,0.0701314)
(200,0.030154)
(400,0.0099214)
(800,0.00231302)
(1600,0.000342867)
(3200,2.13785e-005)
};
 \addplot[
    color=red,
    mark=square,
    ]
    coordinates {
(25,0.210041)
(50,0.239441)
(100,0.232984)
(200,0.203241)
(400,0.162579)
(800,0.121961)
(1600,0.0883594)
(3200,0.0630348)
};
\legend{\(\text{Bias}\), \(\text{Std}\)}
\end{loglogaxis}
\end{tikzpicture}
\end{subfigure}
\caption{Absolute bias and standard deviation of time-average estimators with \(10^{6}\) independent replications and burn-in period  \(b(k)=\lfloor k/10\rfloor\).}
\label{fig:GARCHMH1GG1}
\end{figure}
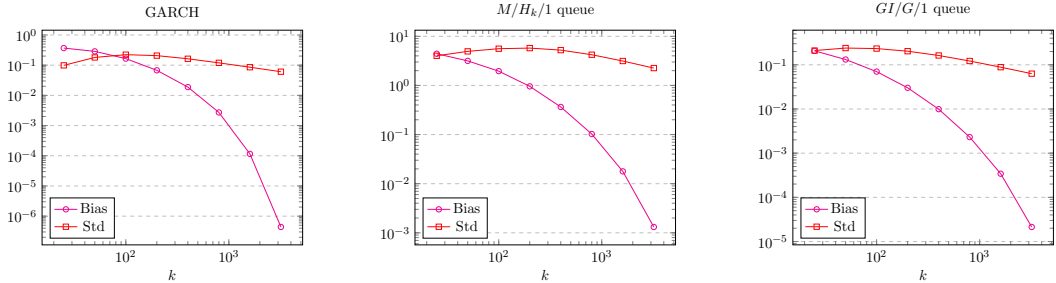
\subsection{GARCH volatility model}
Tables~\ref{tab:GARCH0.1} and \ref{tab:GARCH0.5} estimate  \(\lim_{i\rightarrow\infty}\Pr(\sigma_{i}^{2}> z)\), with   \(\alpha=0.05\), \(\beta=0.92\),   \({\sigma_{0}}^{2}=2\times10^{-5}\), \(w=1.2\times10^{-6}\) and \(z=4\times10^{-5}\).    The left panel in Fig.~\ref{fig:GARCHMH1GG1} shows that, for small values of \(k\), the bias and standard deviation of \(f_{k}\)  are of the same order of magnitude, while for large values of \(k\), the bias is much smaller than the standard deviation and decays at a faster rate. This is consistent with the discussion preceding  Lemma~\ref{le:StdSum}.  In Tables~\ref{tab:GARCH0.1} and \ref{tab:GARCH0.5}, because of the choice of \(q\),  the total running time of   ULR and of SULR is about twice that of LR. For small values of  \(k\), the product \(\text{Cost}\times\text{MSE}\) is much smaller for LR than for ULR and SULR but  LR exhibits a strong bias.  For large values of \(k\), the product \(\text{Cost}\times\text{MSE}\) is twice as large for ULR and SULR as for LR. This is due to the fact that, because of the choice of \(q\), about half the running time of   ULR and SULR is devoted to estimating the bias, that is negligible for large values of  \(k\). The performance of ULR and of  SULR tends to increase with \(k\). This can be explained by the diminishing contribution of the bias to the work-normalized variance of  ULR and of  SULR   as \(k\) increases.  Finally,  ULR and SULR perform better in  Table~\ref{tab:GARCH0.1} than in Table~\ref{tab:GARCH0.5} for large  values of \(k\), and the reverse effect is observed for small values of \(k\). This can be explained by the fact that the variance (resp. bias) of \(f_{k}\) tends to be low (resp. high) when the burn-in is small \cite{whitt1991longRun}, and the impact of the bias on the performance of ULR and of  SULR diminishes as \(k\) increases.  
\begin{table}
\caption{Absolute value of bias in estimating \(\lim_{i\rightarrow\infty}\Pr(\sigma_{i}^{2}> z)\) in a GARCH volatility model with  burn-in period  \(b(k)=\lfloor k/10\rfloor\) and \(10^{6}\) independent replications.}
\begin{center}
\begin{small}
\begin{tabular}{rrccc}\hline
$k$ &  burn-in   & $95\%$ confidence interval  & Std  & Cost \\ \hline
$25$&$2 $ &  $3.6\times10^{-1} \pm 1\times10^{-3}$ & $5.9\times10^{-1}$ & $2.06\times10^2$ \\
$50$&$5 $ &  $2.8\times10^{-1} \pm 8\times10^{-4}$ & $4.1\times10^{-1}$ & $4.14\times10^2$ \\
$100$&$10 $ &  $1.6\times10^{-1} \pm 5\times10^{-4}$ & $2.6\times10^{-1}$ & $7.97\times10^2$ \\
$200$&$20 $ &  $6.7\times10^{-2} \pm 2\times10^{-4}$ & $1.1\times10^{-1}$ & $1.65\times10^3$\\
$400$&$40 $ &  $1.8\times10^{-2} \pm 7\times10^{-5}$ & $3.3\times10^{-2}$ & $3.21\times10^3$\\
$800$&$80 $ &  $2.7\times10^{-3} \pm 1\times10^{-5}$ & $5.8\times10^{-3}$ & $6.42\times10^3$\\
$1600$&$160 $  & $1.1\times10^{-4} \pm 9\times10^{-7}$ & $4.8\times10^{-4}$ & $1.29\times10^4$\\
$3200$&$320 $ & $4.3\times10^{-7} \pm 3\times10^{-8}$ & $1.8\times10^{-5}$ & $2.58\times10^4$ \\
\hline\end{tabular}
 \end{small}
\end{center}
\label{tab:BiasGARCH0.1}
\end{table}

\begin{table}
\caption{Estimation of \(\lim_{i\rightarrow\infty}\Pr(\sigma_{i}^{2}> z)\) in a GARCH volatility model with  \(b(k)=b'(k)=\lfloor k/10\rfloor\). }
\begin{small}
\begin{tabular}{rrllllcr}\hline
$k$ &  burn-in & Method  & $\mu$  & Std & RMSE & Cost& Cost $\times$ MSE \\ \hline
$50$&$5 $&LR & $0.1126$ & $1.8\times10^{-1}$ & $3.4\times10^{-1}$ & $5.00\times10^1$ & $5.8$\\
& &ULR & $0.398 \pm 0.003$ & $1.4\times10^0$ & $1.4\times10^0$ & $1.02\times10^2$ & $200$\\
& &SULR & $0.400 \pm 0.002$ & $1.2\times10^{-3}$ & $1.2\times10^{-3}$ & $1.01\times10^8$ & $140$\\
$200$&$20 $&LR & $0.3319$ & $2.1\times10^{-1}$ & $2.2\times10^{-1}$ & $2.00\times10^2$ & $9.5$\\
& &ULR & $0.3991 \pm 0.0008$ & $4.2\times10^{-1}$ & $4.2\times10^{-1}$ & $4.03\times10^2$ & $71$\\
& &SULR & $0.3993 \pm 0.0007$ & $3.8\times10^{-4}$ & $3.8\times10^{-4}$ & $3.96\times10^8$ & $58$\\
$800$&$80 $&LR & $0.3969$ & $1.2\times10^{-1}$ & $1.2\times10^{-1}$ & $8.00\times10^2$ & $11$\\
& &ULR & $0.3996 \pm 0.0002$ & $1.2\times10^{-1}$ & $1.2\times10^{-1}$ & $1.59\times10^3$ & $23$\\
& &SULR & $0.3996 \pm 0.0002$ & $1.2\times10^{-4}$ & $1.2\times10^{-4}$ & $1.59\times10^9$ & $23$\\
$3200$&$320 $&LR & $0.39963$ & $6.1\times10^{-2}$ & $6.1\times10^{-2}$ & $3.20\times10^3$ & $12$\\
& &ULR & $0.39970 \pm 0.0001$ & $6.1\times10^{-2}$ & $6.1\times10^{-2}$ & $6.34\times10^3$ & $23$\\
& &SULR & $0.39963 \pm 0.0001$ & $6.1\times10^{-5}$ & $6.1\times10^{-5}$ & $6.28\times10^9$ & $23$\\
\hline\end{tabular}
 \end{small}
\label{tab:GARCH0.1}
\end{table}

\begin{table}
\caption{Estimation of   \(\lim_{i\rightarrow\infty}\Pr(\sigma_{i}^{2}> z)\) in a GARCH volatility model with  \(b(k)=b'(k)=\lfloor k/2\rfloor\). }
\begin{small}
\begin{tabular}{rrllllcr}\hline
$k$ &  burn-in & Method   & $\mu$  & Std & RMSE & Cost& Cost $\times$ MSE \\ \hline
$50$&$25 $&LR & $0.1745$ & $2.8\times10^{-1}$ & $3.6\times10^{-1}$ & $5.00\times10^1$ & $6.3$\\
& &ULR & $0.398 \pm 0.002$ & $1.2\times10^0$ & $1.2\times10^0$ & $1.02\times10^2$ & $156$\\
& &SULR & $0.399 \pm 0.002$ & $1.1\times10^{-3}$ & $1.1\times10^{-3}$ & $1.01\times10^8$ & $120$\\
$200$ & $100 $ &LR & $0.3892$ & $2.7\times10^{-1}$ & $2.7\times10^{-1}$ & $2.00\times10^2$ & $15$\\
& &ULR & $0.3995 \pm 0.0006$ & $2.8\times10^{-1}$ & $2.8\times10^{-1}$ & $4.03\times10^2$ & $33$\\
& &SULR & $0.3993 \pm 0.0006$ & $2.8\times10^{-4}$ & $2.8\times10^{-4}$ & $3.96\times10^8$ & $32$\\
$800$ & $400 $ &LR & $0.3995$ & $1.6\times10^{-1}$ & $1.6\times10^{-1}$ & $8.00\times10^2$ & $20$\\
& &ULR & $0.3996 \pm 0.0003$ & $1.6\times10^{-1}$ & $1.6\times10^{-1}$ & $1.59\times10^3$ & $40$\\
& &SULR & $0.3995 \pm 0.0003$ & $1.6\times10^{-4}$ & $1.6\times10^{-4}$ & $1.59\times10^9$ & $40$\\
$3200$ & $1600 $ &LR & $0.39959$ & $8.1\times10^{-2}$ & $8.1\times10^{-2}$ & $3.20\times10^3$ & $21$\\
& &ULR & $0.39966 \pm 0.0002$ & $8.1\times10^{-2}$ & $8.1\times10^{-2}$ & $6.34\times10^3$ & $42$\\
& &SULR & $0.39959 \pm 0.0002$ & $8.1\times10^{-5}$ & $8.1\times10^{-5}$ & $6.28\times10^9$ & $42$\\
\hline\end{tabular}
 \end{small}
\label{tab:GARCH0.5}
\end{table}
\subsection{\(M/H_{k}/1\) queue}Consider a single-server queue with Poisson arrivals at rate \(\lambda=0.75\), where the service time \(V_{n}\) for the \(n\)-th customer      has an hyperexponential distribution with \(\Pr(V_{n}\geq z)=pe^{-2pz}+(1-p)e^{-2(1-p)z}\) for \(z\geq0\), with \(p=0.8875 \). The service-time parameters are taken from \cite{whitt1991longRun}. Tables~\ref{tab:MH0.1} and~\ref{tab:MH0.5} estimate \(\lim_{i\rightarrow\infty}E(X_{i})\). The center panel of
  Fig.~\ref{fig:GARCHMH1GG1} shows that the bias decays at a rate faster  than that of the standard deviation and, in Tables~\ref{tab:MH0.1} and \ref{tab:MH0.5}, the total running time of ULR and of SULR is about twice that of LR. Once again, LR exhibits a strong bias for small values of  \(k\).  When \(k\) is sufficiently large so that the bias is small,  the product \(\text{Cost}\times\text{MSE}\) is twice as large for ULR and SULR as for LR.  Here again, the performance of ULR and of  SULR tends to increase with \(k\),    is higher in  Table~\ref{tab:MH0.1} than in Table~\ref{tab:MH0.5} for large  values of \(k\), while the reverse effect is true for small values of \(k\). 
By the Pollaczek--Khinchine formula,
\begin{displaymath}
\lim_{i\rightarrow\infty}E(X_{i})=\frac{\lambda E(S^{2}) }{2(1-\lambda E(S))}=\frac{\lambda }{4p(1-p)(1-\lambda )}\approx7.51174,
\end{displaymath}
which is consistent with the results in Tables~\ref{tab:MH0.1} and~\ref{tab:MH0.5}. \begin{table}
\caption{Absolute value of bias in estimating \(E(X_{i})\) in an \(M/H_{k}/1\) queue with burn-in period  \(b(k)=\lfloor k/10\rfloor\) and \(10^{6}\) independent replications.}
\begin{center}
\begin{small}
\begin{tabular}{rrllc}\hline
$k$ &  burn-in   & $95\%$ confidence interval  & Std  & Cost  \\ \hline
 $25$&$2 $  & $4.4 \pm 0.04$ & $1.9\times10^1$ & $2.01\times10^2$ \\
$50$&$5 $  & $3.2 \pm 0.02$ & $1.3\times10^1$ & $4.04\times10^2$ \\
$100$&$10 $  & $1.9 \pm 0.02$ & $8.6\times10^0$ & $8.06\times10^2$ \\
$200$&$20 $  & $0.95 \pm 0.01$ & $5.3\times10^0$ & $1.62\times10^3$ \\
$400$&$40 $ & $0.36 \pm 0.005$ & $2.8\times10^0$ & $3.25\times10^3$ \\
$800$&$80 $ & $0.10 \pm 0.002$ & $1.1\times10^0$ & $6.50\times10^3$ \\
$1600$&$160 $ & $0.017 \pm 0.0006$ & $3.2\times10^{-1}$ & $1.28\times10^4$ \\
$3200$&$320 $ & $0.0013 \pm 0.0001$ & $6.7\times10^{-2}$ & $2.62\times10^4$ \\
\hline\end{tabular}
 \end{small}
 \end{center}
\label{tab:BiasMH0.1}
\end{table}

\begin{table}
\caption{Estimation of \(E(X_{i})\) in an \(M/H_{k}/1\) queue with  \(b(k)=b'(k)=\lfloor k/10\rfloor\).}
\begin{small}
\begin{tabular}{rrllllcr}\hline
$k$ &  burn-in & Method  & $\mu$  & Std & RMSE & Cost& Cost $\times$ MSE \\ \hline
$50$&$5 $&LR & $4.35$ & $4.9\times10^0$ & $5.9\times10^0$ & $5.00\times10^1$ & $1.7\times10^3$\\
& &ULR & $7.53 \pm 0.07$ & $3.8\times10^1$ & $3.8\times10^1$ & $1.00\times10^2$ & $1.5\times10^5$\\
& &SULR & $7.53 \pm 0.07$ & $3.8\times10^{-2}$ & $3.8\times10^{-2}$ & $9.87\times10^7$ & $1.4\times10^5$\\
$200$&$20 $&LR & $6.547$ & $5.8\times10^0$ & $5.8\times10^0$ & $2.00\times10^2$ & $6.8\times10^3$\\
& &ULR & $7.51 \pm 0.03$ & $1.7\times10^1$ & $1.7\times10^1$ & $4.00\times10^2$ & $1.1\times10^5$\\
& &SULR & $7.50 \pm 0.03$ & $1.6\times10^{-2}$ & $1.6\times10^{-2}$ & $4.00\times10^8$ & $1.1\times10^5$\\
$800$&$80 $&LR & $7.412$ & $4.2\times10^0$ & $4.2\times10^0$ & $8.00\times10^2$ & $1.4\times10^4$\\
& &ULR & $7.503 \pm 0.01$ & $5.3\times10^0$ & $5.3\times10^0$ & $1.59\times10^3$ & $4.4\times10^4$\\
& &SULR & $7.516 \pm 0.01$ & $5.4\times10^{-3}$ & $5.4\times10^{-3}$ & $1.60\times10^9$ & $4.7\times10^4$\\
$3200$&$320 $&LR & $7.512$ & $2.3\times10^0$ & $2.3\times10^0$ & $3.20\times10^3$ & $1.6\times10^4$\\
& &ULR & $7.511 \pm 0.004$ & $2.3\times10^0$ & $2.3\times10^0$ & $6.29\times10^3$ & $3.2\times10^4$\\
& &SULR & $7.513 \pm 0.004$ & $2.3\times10^{-3}$ & $2.3\times10^{-3}$ & $6.70\times10^9$ & $3.4\times10^4$\\
\hline\end{tabular}
 \end{small}
\label{tab:MH0.1}
\end{table}

\begin{table}
\caption{Estimation of \(E(X_{i})\) in an \(M/H_{k}/1\) queue with  \(b(k)=b'(k)=\lfloor k/2\rfloor\).}
\begin{small}
\begin{tabular}{rrllllcr}\hline
$k$ &  burn-in & Method   & $\mu$  & Std & RMSE & Cost& Cost $\times$ MSE \\ \hline
$50$&$25 $&LR & $5.15$ & $6.5\times10^0$ & $6.9\times10^0$ & $5.00\times10^1$ & $2.4\times10^3$\\
& &ULR & $7.53 \pm 0.07$ & $3.5\times10^1$ & $3.5\times10^1$ & $1.00\times10^2$ & $1.2\times10^5$\\
& &SULR & $7.52 \pm 0.07$ & $3.5\times10^{-2}$ & $3.5\times10^{-2}$ & $9.87\times10^7$ & $1.2\times10^5$\\
$200$&$100 $&LR & $7.11$ & $7.6\times10^0$ & $7.6\times10^0$ & $2.00\times10^2$ & $1.2\times10^4$\\
& &ULR & $7.51 \pm 0.03$ & $1.4\times10^1$ & $1.4\times10^1$ & $4.00\times10^2$ & $7.7\times10^4$\\
& &SULR & $7.51 \pm 0.03$ & $1.4\times10^{-2}$ & $1.4\times10^{-2}$ & $4.00\times10^8$ & $7.4\times10^4$\\
$800$&$400 $&LR & $7.508$ & $5.5\times10^0$ & $5.5\times10^0$ & $8.00\times10^2$ & $2.4\times10^4$\\
& &ULR & $7.508 \pm 0.01$ & $5.6\times10^0$ & $5.6\times10^0$ & $1.59\times10^3$ & $5.0\times10^4$\\
& &SULR & $7.514 \pm 0.01$ & $5.7\times10^{-3}$ & $5.7\times10^{-3}$ & $1.60\times10^9$ & $5.2\times10^4$\\
$3200$&$1600 $&LR & $7.512$ & $3.0\times10^0$ & $3.0\times10^0$ & $3.20\times10^3$ & $2.9\times10^4$\\
& &ULR & $7.509 \pm 0.006$ & $3.0\times10^0$ & $3.0\times10^0$ & $6.29\times10^3$ & $5.6\times10^4$\\
& &SULR & $7.512 \pm 0.006$ & $3.0\times10^{-3}$ & $3.0\times10^{-3}$ & $6.70\times10^9$ & $6.0\times10^4$\\
\hline\end{tabular}
 \end{small}
\label{tab:MH0.5}
\end{table}
\subsection{\(GI/G/1\) queue}Assume that the  interarrival time \(D_{n}\) and service time \(V_{n}\) for the \(n\)-th customer have Pareto distributions with  \(\Pr(D_{n}\geq z)=(1+z)^{-7}\) and \(\Pr(V_{n}\geq z)=(1+z/\alpha)^{-7}\) for \(z\geq0\), with   \(\alpha=0.8\).
Tables~\ref{tab:GG0.1} and~\ref{tab:GG0.5} estimate \(\lim_{i\rightarrow\infty}\Pr(X_{i}>1)\).  The right panel  Fig.~\ref{fig:GARCHMH1GG1} shows that the bias decays at a rate faster  than that of the standard deviation, the total running time of ULR and of SULR is about twice that of LR, and the bias of LR is strong for small values of  \(k\).  When \(k\) is large enough so that  the bias is small,  the product \(\text{Cost}\times\text{MSE}\) is twice as large for ULR and SULR as for LR.  Here again, the performance of ULR and of  SULR tends to increase with \(k\), is higher in  Table~\ref{tab:GG0.1} than in Table~\ref{tab:GG0.5} for large  values of \(k\), while the reverse  is true for small values of \(k\).
\begin{table}
\caption{Absolute value of bias in estimating of \(\lim_{i\rightarrow\infty}\Pr(X_{i}>1)\) in a \(GI/G/1\) queue with burn-in period  \(b(k)=\lfloor k/10\rfloor\) and \(10^{6}\) independent replications.}
\begin{center}
\begin{small}
\begin{tabular}{rrlccc}\hline
$k$ &  burn-in  & $95\%$ confidence interval  & Std  & Cost \\ \hline
$25$&$2 $ & $2.1\times10^{-1} \pm 1\times10^{-3}$ & $6.3\times10^{-1}$ & $2.11\times10^2$\\
$50$&$5 $ & $1.3\times10^{-1} \pm 8\times10^{-4}$ & $3.9\times10^{-1}$ & $4.03\times10^2$ \\
$100$&$10 $ & $7.0\times10^{-2} \pm 4\times10^{-4}$ & $2.3\times10^{-1}$ & $8.12\times10^2$ \\
$200$&$20 $ & $3.0\times10^{-2} \pm 2\times10^{-4}$ & $1.2\times10^{-1}$ & $1.60\times10^3$ \\
$400$&$40 $ & $9.9\times10^{-3} \pm 1\times10^{-4}$ & $5.1\times10^{-2}$ & $3.20\times10^3$ \\
$800$&$80 $ & $2.3\times10^{-3} \pm 3\times10^{-5}$ & $1.8\times10^{-2}$ & $6.47\times10^3$\\
$1600$&$160 $  & $3.4 \times10^{-4}\pm 1\times10^{-5}$ & $4.9\times10^{-3}$ & $1.31\times10^4$ \\
$3200$&$320 $ & $2.1\times10^{-5} \pm 2\times10^{-6}$ & $8.7\times10^{-4}$ & $2.58\times10^4$\\

\hline\end{tabular}
 \end{small}
 \end{center}
\label{tab:BiasGG0.1}
\end{table}

\begin{table}
\caption{Estimation of \(\lim_{i\rightarrow\infty}\Pr(X_{i}>1)\) in a \(GI/G/1\) queue with \(b(k)=b'(k)=\lfloor k/10\rfloor\).}
\begin{small}
\begin{tabular}{rrllllrr}\hline
$k$ &  burn-in & Method  & $\mu$ & Std & RMSE & Cost& Cost $\times$ MSE\\ \hline
$50$&$5 $&LR & $0.2004$ & $2.4\times10^{-1}$ & $2.7\times10^{-1}$ & $5.00\times10^1$ & $3.8$\\
& &ULR & $0.33 \pm 0.002$ & $1.2\times10^0$ & $1.2\times10^0$ & $1.00\times10^2$ & $135$\\
& &SULR & $0.334 \pm 0.002$ & $1.1\times10^{-3}$ & $1.1\times10^{-3}$ & $1.00\times10^8$ & $131$\\
$200$&$20 $&LR & $0.302$ & $2.0\times10^{-1}$ & $2.1\times10^{-1}$ & $2.00\times10^2$ & $8.4$\\
& &ULR & $0.3327 \pm 0.0008$ & $4.0\times10^{-1}$ & $4.0\times10^{-1}$ & $3.93\times10^2$ & $64$\\
& &SULR & $0.3323 \pm 0.0008$ & $4.0\times10^{-4}$ & $4.0\times10^{-4}$ & $3.97\times10^8$ & $63$\\
$800$&$80 $&LR & $0.3298$ & $1.2\times10^{-1}$ & $1.2\times10^{-1}$ & $8.00\times10^2$ & $12$\\
& &ULR & $0.3321 \pm 0.0003$ & $1.3\times10^{-1}$ & $1.3\times10^{-1}$ & $1.64\times10^3$ & $29$\\
& &SULR & $0.3321 \pm 0.0003$ & $1.3\times10^{-4}$ & $1.3\times10^{-4}$ & $1.63\times10^9$ & $28$\\
$3200$&$320 $&LR & $0.33222$ & $6.3\times10^{-2}$ & $6.3\times10^{-2}$ & $3.20\times10^3$ & $13$\\
& &ULR & $0.33221 \pm 0.0001$ & $6.3\times10^{-2}$ & $6.3\times10^{-2}$ & $6.35\times10^3$ & $25$\\
& &SULR & $0.33224 \pm 0.0001$ & $6.3\times10^{-5}$ & $6.3\times10^{-5}$ & $6.33\times10^9$ & $25$\\
\hline\end{tabular}
\end{small}
\label{tab:GG0.1}
\end{table}

\begin{table}
\caption{Estimation of \(\lim_{i\rightarrow\infty}\Pr(X_{i}>1)\) in a \(GI/G/1\) queue with \(b(k)=b'(k)=\lfloor k/2\rfloor\).}
\begin{small}
\begin{tabular}{rrllllrr}\hline
$k$ &  burn-in & Method & $\mu$ & Std & RMSE & Cost& Cost $\times$ MSE\\ \hline $50$&$25 $&LR & $0.2479$ & $3.2\times10^{-1}$ & $3.3\times10^{-1}$ & $5.00\times10^1$ & $5.4$\\
& &ULR & $0.330 \pm 0.002$ & $1.1\times10^0$ & $1.1\times10^0$ & $1.00\times10^2$ & $111$\\
& &SULR & $0.333 \pm 0.002$ & $1.1\times10^{-3}$ & $1.1\times10^{-3}$ & $1.00\times10^8$ & $112$\\
$200$&$100 $&LR & $0.3228$ & $2.6\times10^{-1}$ & $2.6\times10^{-1}$ & $2.00\times10^2$ & $14$\\
& &ULR & $0.3325 \pm 0.0007$ & $3.5\times10^{-1}$ & $3.5\times10^{-1}$ & $3.93\times10^2$ & $48$\\
& &SULR & $0.3322 \pm 0.0007$ & $3.5\times10^{-4}$ & $3.5\times10^{-4}$ & $3.97\times10^8$ & $49$\\
$800$&$400 $&LR & $0.3321$ & $1.6\times10^{-1}$ & $1.6\times10^{-1}$ & $8.00\times10^2$ & $21$\\
& &ULR & $0.3319 \pm 0.0003$ & $1.6\times10^{-1}$ & $1.6\times10^{-1}$ & $1.64\times10^3$ & $42$\\
& &SULR & $0.3321 \pm 0.0003$ & $1.6\times10^{-4}$ & $1.6\times10^{-4}$ & $1.63\times10^9$ & $42$\\
$3200$&$1600 $&LR & $0.33234$ & $8.4\times10^{-2}$ & $8.4\times10^{-2}$ & $3.20\times10^3$ & $23$\\
& &ULR & $0.33227 \pm 0.0002$ & $8.4\times10^{-2}$ & $8.4\times10^{-2}$ & $6.35\times10^3$ & $45$\\
& &SULR & $0.33234 \pm 0.0002$ & $8.4\times10^{-5}$ & $8.4\times10^{-5}$ & $6.33\times10^9$ & $45$\\
\hline\end{tabular}
 \end{small}
\label{tab:GG0.5}
\end{table}
\section{Conclusion}
Under a coupling assumption, we have established bounds on the bias, variance and mean square error of standard time-average estimators, and shown the sharpness of the variance and mean square error bounds. We have  built an unbiased RMLMC estimator for the bias of a conventional time-average estimator. Combining this unbiased estimator with a conventional time-average estimator yields an unbiased estimator  \(\hat f_{k}\)  of \(\mu\).  Both unbiased estimators are square-integrable  and have finite expected running time. Under certain conditions, they can be built  without any precomputations. For a suitable choice of parameters, we have shown that   \(\hat f_{k}\) is  asymptotically at least as efficient as \(f_{k}\), up to a multiplicative factor arbitrarily close to \(1\). We have also constructed an efficient stratified version \(\tilde f_{k,n}\) of    \(\hat f_{k}\). Building more refined stratified versions of    \(\hat f_{k}\),  such as those in \cite{Vihola2018}, is left for future research. Our approach permits to estimate
the bias of \(f_{k}\) and to determine the number of time-steps needed to substantially reduce it. It  can be implemented in a parallelized fashion and allows the robust construction of confidence intervals for \(\mu\). We have provided examples in  volatility forecasting,  queues, and  the simulation of high-dimensional Gaussian vectors where our approach is provably efficient, even when \(f\) is discontinuous. Our numerical experiments are consistent with our theoretical findings. In our experiments, the value  of \(q\) is fixed
and   \(f_{k}\) is about twice as efficient as  \(\hat f_{k}\) and  \(\tilde f_{k,n}\)  when \(k\) is sufficiently large.  As per the discussion following Theorem~\ref{th:maintheta}, for large values of \(k\), the performance of     \(\hat f_{k}\) and of    \(\tilde f_{k,n}\)   should increase if  \(q\) decreases. In practice, though, determining the optimal value of \(q\) for a given \(k\) may require a large amount of pre-computations. For     \(\tilde f_{k,n}\), for instance, it can be shown that the optimal value of \(q\) depends on the variance of \(Z_k\), that is not easy to estimate accurately for large values of \(k\).
\appendix
\section{Proof of Proposition~\ref{pr:contractive1}}
We  show by induction on \(i\) that, for \(i,m\geq0\), \begin{equation}
\label{eq:inductiond2}E(\rho^{2}(X_{i},X_{i,m}))\leq\kappa'\eta^{i}.
\end{equation}As \(X_{0,m}\sim X_{m}\), \eqref{eq:inductiond2}  holds for \(i=0\). Assume now that  \eqref{eq:inductiond2} holds for \(i\). It follows from the definitions of \(G_{i}\) and of \(X_{i,m}\) that\begin{equation*}
X_{i+1,m}=g(X_{i,m},U_{i}).
\end{equation*}
Together with \eqref{eq:recXi} and \eqref{eq:distanceContractive}, this implies that\begin{equation*}
E(\rho^{2}(X_{i+1},X_{i+1,m}))\leq\eta E(\rho^{2}(X_{i},X_{i,m})).
\end{equation*}
Thus  \eqref{eq:inductiond2} holds for \(i+1\). Combining~\eqref{eq:defkappaLips} and \eqref{eq:inductiond2} shows that \(E((f(X_{i,m})-f(X_{i}))^{2})\leq\kappa^{2}\kappa'^{\gamma}\eta^{\gamma i}\) for \(i,m\geq0\). This concludes the proof.\qed
\section{Proof of Lemma~\ref{le:StdSum}}
We first prove the following.\begin{proposition}
\label{pr:shiftedPair}For  \(n,m,m'\in\mathbb{Z}\) with \(n\geq-m\) and \( n\geq -m'\), we have\begin{displaymath}
E((f(X_{n,m'})-f(X_{n,m}))^{2})=E((f(X_{n+m,m'-m})-f(X_{n+m}))^{2}).
\end{displaymath} \end{proposition}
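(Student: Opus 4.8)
The plan is to prove the slightly stronger statement that the random pairs \((X_{n,m'},X_{n,m})\) and \((X_{n+m,m'-m},X_{n+m})\) have the same joint distribution; the claimed identity then follows at once, since \((z,z')\mapsto (f(z)-f(z'))^{2}\) is a measurable function on \(F\times F\).

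First I would unwind the definition \eqref{XimDefinition}. We have \(X_{n,m}=G_{n+m}(X_{0};U_{-m},\dots,U_{n-1})\) and \(X_{n,m'}=G_{n+m'}(X_{0};U_{-m'},\dots,U_{n-1})\), so both are measurable functions of the deterministic \(X_{0}\) together with a finite contiguous block of the bi-infinite sequence \((U_{i},i\in\mathbb{Z})\), and both blocks end at index \(n-1\). Likewise, using \(X_{i,0}=X_{i}\) and \eqref{eq:xiGi}, we get \(X_{n+m}=G_{n+m}(X_{0};U_{0},\dots,U_{n+m-1})\), while \(X_{n+m,m'-m}=G_{n+m'}(X_{0};U_{m-m'},\dots,U_{n+m-1})\); these are functions of \(X_{0}\) and of contiguous blocks ending at index \(n+m-1\). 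The hypotheses \(n\ge -m\) and \(n\ge -m'\) guarantee that all four quantities are well defined (in particular \(n+m\ge 0\) and \(n+m\ge m-m'\)).

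The key observation is that substituting \(U_{j}\mapsto U_{j+m}\) in the formulas for \(X_{n,m'}\) and \(X_{n,m}\) produces exactly \(X_{n+m,m'-m}\) and \(X_{n+m}\), respectively: the block \(U_{-m'},\dots,U_{n-1}\) becomes \(U_{m-m'},\dots,U_{n+m-1}\), and the block \(U_{-m},\dots,U_{n-1}\) becomes \(U_{0},\dots,U_{n+m-1}\). Since the \(U_{i}\), \(i\in\mathbb{Z}\), are i.i.d., the shifted sequence \((U_{i+m},i\in\mathbb{Z})\) has the same law as \((U_{i},i\in\mathbb{Z})\), and \(X_{0}\) is deterministic hence unchanged. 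Applying a common measurable map to these two equidistributed sequences shows that the pair obtained from \((X_{n,m'},X_{n,m})\) by the substitution has the same law as \((X_{n,m'},X_{n,m})\) itself; but that shifted pair is precisely \((X_{n+m,m'-m},X_{n+m})\). Hence the two pairs are equal in distribution, and taking expectations of \((f(\cdot)-f(\cdot))^{2}\) completes the argument.

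The only real care needed — and the single spot where an index slip is easy — is the bookkeeping: one must check that the one shift \(j\mapsto j+m\) transforms both coordinates correctly simultaneously, so that the joint law (not merely the marginals) is preserved, and that \(X_{n,m}\) really uses the block ending at \(n-1\) while \(X_{n+m}\) uses the block ending at \(n+m-1\), consistently with that shift. Note that no recurrence property, coupling, or Assumption A1 enters here; this is purely a stationarity/relabelling argument on the driving noise.
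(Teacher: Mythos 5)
Your argument is correct and is essentially identical to the paper's: both unwind the definition \eqref{XimDefinition} to express all four random variables via \(G_{n+m}\) and \(G_{n+m'}\) applied to blocks of \(U_i\)'s, then use the shift \(j\mapsto j+m\) of the i.i.d.\ driving sequence to identify the joint law of \((X_{n,m'},X_{n,m})\) with that of \((X_{n+m,m'-m},X_{n+m})\). The only difference is presentational — the paper states the distributional identity of the two pairs of \(U\)-blocks directly, while you phrase it explicitly as a shift-invariance argument — so there is nothing to flag.
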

\begin{proof}
 We have \(X_{n,m}=G_{n+m}(X_{0};U_{-m},\dots,U_{n-1})\), and  \(X_{n,m'}=G_{n+m'}(X_{0};U_{-m'},\dots,U_{n-1})\). Also,  \(X_{n+m}=G_{n+m}(X_{0};U_{0},\dots,U_{n+m-1})\), and \(X_{n+m,m'-m}=G_{n+m'}(X_{0};U_{m-m'},\dots,U_{n+m-1})\). As\begin{displaymath}
((U_{-m},\dots,U_{n-1}),(U_{-m'},\dots,U_{n-1}))\sim((U_{0},\dots,U_{n+m-1}),(U_{m-m'},\dots,U_{n+m-1})),  \end{displaymath} the pair \((X_{n,m},X_{n,m'})\) has the same distribution as  \((X_{n+m},X_{n+m,m'-m})\).
This concludes the proof.\end{proof}

We now prove the lemma. For \(l\geq 0\), let \begin{displaymath}
\sigma_{l}:=2^{l/2}\left( \sqrt{\nu(0)}+(\sqrt{2}+1)\sum^{2^{l}}_{i=1} \sqrt{\nu(i)}(i^{-1/2}-2^{-l/2})\right).
\end{displaymath}In particular, we have \(\sigma_{0}=\sqrt{\nu(0)}\). We show by induction on \(l\) that \begin{equation}\label{eq:indSigmal}
\std(\sum ^{h+k-1}_{i=h}f(X_{i}))\le \sigma_{l} \text{ for }h\ge0\text{ and }\,0\leq k\leq2^{l}.
\end{equation} 
If \(k=0\), the summation in the left-hand side of    \eqref{eq:indSigmal} is null by convention, and     \eqref{eq:indSigmal} trivially holds.  Applying \eqref{eq:rhoDef} with \(i=0\) shows that, for \(m\ge 0\),
\begin{equation}\label{eq:upper-boundRho0}
E((f(X_{m})-f(X_{0}))^{2})\leq\nu(0).
\end{equation}Hence \eqref{eq:indSigmal} holds for \(l=0\). Assume now that  \eqref{eq:indSigmal} holds for \(l\). We show  that it holds for \(l+1\). Fix non-negative integers \(k\) and \(h\), with \(0\le k\leq2^{l+1}\). If \(k\leq1\) then   \eqref{eq:indSigmal} holds for \(l+1\) as a consequence of \eqref{eq:upper-boundRho0}. Assume now that \(k>1\) and let   \(j=\lfloor k/2\rfloor\).    For \(i\geq0\), let \(X'_{i}=X_{h+j+i-1,1-h-j}\). By~\eqref{eq:recXim}
and the remark that follows it,  the sequences \((X_{i},0\leq i\leq k-j)\) and \((X'_{i},0\leq i\leq k-j)\)  have the same distribution. Set
\(V_{1}=\sum^{{h+j-1}}_{i=h}f(X_{i})\), 
\(V_{2}=\sum^{{h+k-1}}_{i=h+j}f(X_{i})\), 
  and \(V'_{2}:=\sum ^{k-j}_{i=1}f(X'_{i})\). Then \begin{eqnarray}\label{eq:stdSummation}
\std(\sum^{{h+k-1}}_{i=h}f(X_{i}))&=&\std(V_{1}+V_{2})\nonumber\\
&\le&\std(V_{1}+V'_{2})+\std(V_{2}-V'_{2}).
\end{eqnarray}
The second equation follows from the sub-linearity of the standard deviation,
 i.e., \(\std(V+V')\le\std(V)+\std(V')\) for any square-integrable random variables \(V\) and \(V'\). Note that \(V'_{2}\)  has the same distribution as \(\sum ^{k-j}_{i=1}f(X_{i})\).   As \(j\) and \(k-j\) are upper-bounded by \(2^{l}\), the induction hypothesis implies that \(\std(V_{1})\) and  \(\std(V'_{2})\) are both upper-bounded by \(\sigma_{l}\).  Furthermore,
by construction,  \(V_{1}\) (resp. \(V'_{2}\))
 is a deterministic measurable function of \((U_{0},\dots,U_{h+j-2})\) (resp.
 \((U_{h+j-1},\dots,U_{h+k-2})\)). Thus, \(V_{1}\) and \(V'_{2}\)  are independent. Hence
\begin{eqnarray}\label{eq:stdSum}
\var(V_{1}+V'_{2})&=&\var(V_{1})+\var(V'_{2})\nonumber\\
&\le&2{\sigma_l}^2.
\end{eqnarray}
Let  \(i\in[1, k  -j]\). Applying Proposition~\ref{pr:shiftedPair} with  \(m'=0\), \(n=h+j+i-1\) and \(m=1-h-j\) shows that
\begin{displaymath}
E((f(X_{h+j+i-1})-f(X'_{i}))^{2})=E((f(X_{i,h+j-1})-f(X_i))^{2}).
\end{displaymath} Since \(j>0\), together with~\eqref{eq:rhoDef}, this implies that       \(E((f(X_{h+j+i-1})-f(X'_{i}))^{2})\leq \nu(i)\). Thus 
\begin{eqnarray}\label{eq:stdDiff}
\std(V_{2}-V'_{2})&=&\std\left(\sum^{k-j}_{i=1}(f(X_{h+j+i-1})-f(X'_{i}))\right)\nonumber\\
&\le&\sum^{k-j}_{i=1}\sqrt{\nu(i)},
\end{eqnarray} where the second equation follows from the sub-linearity of the standard deviation.
Combining \eqref{eq:stdSummation}, \eqref{eq:stdSum} and \eqref{eq:stdDiff} yields \begin{equation*}
\std(\sum ^{h+k-1}_{i=h}f(X_{i}))\le \sqrt{2}\sigma_{l}+\sum^{2^{l}}_{i=1}\sqrt{\nu(i)}.
\end{equation*}
By the definition of \(\sigma_{l}\),
\begin{eqnarray*}
 \sqrt{2}\sigma_{l}+\sum^{2^{l}}_{i=1}\sqrt{\nu(i)} &=&2^{(l+1)/2}\left( \sqrt{\nu(0)}+(\sqrt{2}+1)\sum^{2^{l}}_{i=1} \sqrt{\nu(i)}(i^{-1/2}-2^{-l/2})\right)+\sum^{2^{l}}_{i=1}\sqrt{\nu(i)}
\\&=&2^{(l+1)/2}\left( \sqrt{\nu(0)}+(\sqrt{2}+1)\sum^{2^{l}}_{i=1} \sqrt{\nu(i)}(i^{-1/2}-2^{-(l+1)/2})\right)
\\&\leq&\sigma_{l+1},
\end{eqnarray*}
where the second equation follows from standard calculations. Thus, the induction hypothesis holds for \(l+1\). 

Given  \(h\geq0\) and  \(k\geq1\), let \(l:=\lceil\log_{2}(k)\rceil\). By \eqref{eq:indSigmal},
\begin{eqnarray*}
\std(\sum ^{h+k-1}_{i=h}f(X_{i}))
&\leq&2^{l/2}\left( \sqrt{\nu(0)}+(\sqrt{2}+1)\sum^{2^{l}}_{i=1} \sqrt{\frac{\nu(i)}{{i}}}\right),\\
&\leq&\sqrt{2k}\left( \sqrt{\nu(0)}+(2+\sqrt{2})\sum^{2^{l}}_{i=1} \sqrt{\frac{\nu(i)}{{i+1}}}\right),\\
&\leq&2(\sqrt{2}+1)\sqrt{k}\sum^{2^{l}}_{i=0} \sqrt{\frac{\nu(i)}{{i+1}}},\\
\end{eqnarray*}      
where the second equation follows from the inequalities \(l\le\log_{2}(k)+1\) and \(2i\ge i+1\) for \(i\geq1\). As \(2(\sqrt{2}+1)\leq 5\), this completes the proof.\qed 
\section{Proof of Theorem~\ref{th:biasedLongRun}}
The following proposition gives a bound on the tail of the sequence \(\nu\).\begin{proposition}\label{pr:2kk'}For non-negative integers \(h,h'\) with \(2h\leq h'\),
we have \begin{equation*}
\sum^{h'}_{i=2h}\nu(i)\leq (\overline{\nu}(h))^{2}.
\end{equation*}  
\end{proposition}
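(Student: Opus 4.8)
The plan is to rewrite everything in terms of the sequence $a_j := \sqrt{\nu(j)/(j+1)}$, for which $\overline{\nu}(h) = \sum_{j \ge h} a_j$ and $\nu(i) = (i+1) a_i^2$. First I would observe that $(a_j)$ is decreasing: since $(\nu(j))$ is decreasing, $(j+2) a_{j+1}^2 = \nu(j+1) \le \nu(j) = (j+1) a_j^2$, so $a_{j+1} \le a_j$. The core of the argument is to expand the square $(\overline{\nu}(h))^2 = \sum_{m \ge h} \sum_{n \ge h} a_m a_n$ and to group its terms according to the value of $\max(m,n)$. For a fixed integer $i \ge h$, the terms with $\max(m,n) = i$ are those with $m = i$ and $h \le n \le i$, together with those with $n = i$ and $h \le m \le i-1$; their total is $a_i \bigl( 2 \sum_{n=h}^{i} a_n - a_i \bigr)$.

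Next I would bound this quantity from below. Because $(a_n)$ is decreasing, $\sum_{n=h}^{i} a_n \ge (i - h + 1) a_i$, hence the group of terms with $\max(m,n) = i$ sums to at least $(2i - 2h + 1) a_i^2$. When $i \ge 2h$ we have $2i - 2h + 1 \ge i + 1$, so this lower bound is at least $(i+1) a_i^2 = \nu(i)$. The groups indexed by distinct values of $i$ are pairwise disjoint, and for $2h \le i \le h'$ they consist of terms that actually occur in the expansion of $(\overline{\nu}(h))^2$ (both indices being $\ge h$); summing the previous inequality over $i$ from $2h$ to $h'$ therefore gives $\sum_{i=2h}^{h'} \nu(i) \le (\overline{\nu}(h))^2$, which is the claim.

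The step I expect to be the real obstacle is finding the right way to distribute the mass of $(\overline{\nu}(h))^2$: the naive attempts --- bounding each $\nu(i)$ against $\bigl( \sum_{j=h}^{i} a_j \bigr)^2$, or splitting $[2h, h']$ into dyadic blocks --- both lose a multiplicative constant larger than $1$. Organising the cross terms by $\max(m,n)$ is what makes the constant exactly $1$, and it is also where the hypothesis $2h \le h'$ is used, through the inequality $2i - 2h + 1 \ge i + 1$.
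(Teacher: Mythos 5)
Your proof is correct and takes essentially the same route as the paper: grouping the expansion of $(\overline{\nu}(h))^2$ by $\max(m,n)=i$ is exactly the diagonal-plus-off-diagonal split $\bigl(\sum a_i\bigr)^2=\sum a_i^2+2\sum_i a_i\sum_{j<i}a_j$ used in the paper's proof, and both arguments then bound each group below by $(2i-2h+1)a_i^2$ via monotonicity of $a_i$ and finish with $2i-2h+1\ge i+1$ for $i\ge 2h$. The only cosmetic difference is that the paper restricts to the finite sum $\sum_{i=h}^{h'}a_i$ before squaring while you work with the full series and then discard unused nonnegative terms.
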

\begin{proof}
We have
\begin{eqnarray*}\left(\sum_{i=h}^{h'}\sqrt{\frac{\nu(i)}{i+1}}\right)^{2}&=&\sum_{i=h}^{h'}\frac{\nu(i)}{i+1}+2\sum_{i=h}^{h'}\sqrt{\frac{\nu(i)}{i+1}}\left(\sum_{j=h}^{i-1}\sqrt{\frac{\nu(j)}{j+1}}\right)
\\&\ge&\sum_{i=h}^{h'}(2i-2h+1)\frac{\nu(i)}{i+1}\\
&\ge&\sum^{h'}_{i=2h}\nu(i).
\end{eqnarray*}
The second equation follows by observing that \(\nu(i)/(i+1)\le\nu(j)/(j+1)\) for \(0\leq j<i\), which implies that \(\sum_{j=h}^{i-1}\sqrt{{\nu(j)}/(j+1)}\ge(i-h)\sqrt{\nu(i)/(i+1)}\). 
\end{proof}

We now prove Theorem~\ref{th:biasedLongRun}. By Proposition~\ref{pr:2kk'},   \(\sum^{\infty}_{i=0}\nu(i)<\infty\). Hence \(\nu(i)\) goes to \(0\) as \(i\) goes to infinity. Since \((E(V))^{2}\leq E(V^{2})\) for any square-integrable random variable \(V\),  it follows from \eqref{eq:rhoDef} that, for \(i,m\geq0\),\begin{equation*}
|E(f(X_{i,m})-f(X_{i}))|\leq\sqrt{\nu(i)}.
\end{equation*}As  \(X_{i,m}\sim X_{i+m}\), we have \(E(f(X_{i,m}))=E(f(X_{i+m}))\). Therefore, \begin{equation}\label{eq:im}
|E(f(X_{i+m})-f(X_{i}))|\leq\sqrt{\nu(i)}.
\end{equation} Thus \((E(f(X_{h})),h\ge0)\) is a Cauchy sequence and  has a finite limit \(\mu\) as \(h\) goes to infinity.   
Letting \(m\) go to infinity in \eqref{eq:im}  implies \eqref{eq:thBias}.

By convexity of the square norm function,\begin{eqnarray*}(E(\frac{1}{k}\sum^{h+k-1}_{i=h}f(X_{i}))-\mu)^{2}&\leq& \frac{1}{k}\sum^{h+k-1}_{i=h}(E(f(X_{i}))-\mu)^{2},\\
&\le& \frac{1}{k}\sum^{h+k-1}_{i=h}\nu(i)\\
&\le& \frac{1}{k}\sum^{h+k-1}_{i=2\lfloor h/2\rfloor}\nu(i)\\&\le&\frac{1}{k}(\overline{\nu}(\lfloor h/2\rfloor))^{2}.
\end{eqnarray*} The third equation follows from the inequality \(2\lfloor h/2\rfloor\le h\), and the last one from Proposition~\ref{pr:2kk'}. This implies~\eqref{eq:thLongRunBias}.

As \(\overline{\nu}(\lfloor h/2\rfloor)\leq \overline{\nu}(0)\) for \(h\geq0\), it follows from~\eqref{eq:thLongRunBias} that, for \(k>0,\) \begin{equation}\label{eq:biasSimple}
(E(\frac{1}{k}\sum^{h+k-1}_{i=h}f(X_{i}))-\mu)^{2}\le\frac{(\overline{\nu}(0))^{2}}{k}.
\end{equation}Since the mean square error is related to the bias and standard deviation via the equation\begin{displaymath}
E((V-\mu)^{2})=(E(V)-\mu)^{2}+(\std(V))^{2},
\end{displaymath}for any square-integrable random variable \(V\), \eqref{eq:thMSE} follows by combining~\eqref{eq:biasSimple} with Lemma~\ref{le:StdSum}.\qed
\section{Proof of Proposition~\ref{pr:UpperBoundDelta}}
We first prove the following propositions.
\begin{proposition}\label{pr:MomentInequality}For \(u,v>0\) and \(\delta\geq0\), we have\begin{equation*}
(u+v)^{\delta}\leq2^{\delta}(u^{\delta}+v^{\delta}).
\end{equation*} 
\end{proposition}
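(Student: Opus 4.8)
The plan is to reduce to the larger of the two variables. Without loss of generality, assume $u\geq v$ (the roles of $u$ and $v$ are symmetric, so if $v>u$ we simply swap them). Then $u+v\leq 2u$, and since $x\mapsto x^{\delta}$ is nondecreasing on $(0,\infty)$ for $\delta\geq 0$, we get
\begin{equation*}
(u+v)^{\delta}\leq(2u)^{\delta}=2^{\delta}u^{\delta}.
\end{equation*}
Finally, since $v^{\delta}\geq 0$, we have $u^{\delta}\leq u^{\delta}+v^{\delta}$, and multiplying by $2^{\delta}\geq 0$ yields $(u+v)^{\delta}\leq 2^{\delta}(u^{\delta}+v^{\delta})$, as desired.

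There is essentially no obstacle here: the only thing to be careful about is that monotonicity of $x\mapsto x^{\delta}$ requires $\delta\geq 0$, which is exactly the hypothesis, and that the case $\delta=0$ is covered since both sides then equal, respectively, $1$ and $2^{0}\cdot 2=2$, so the inequality still holds. One could alternatively normalize by setting $t=u/(u+v)\in(0,1)$ and prove $1\leq 2^{\delta}(t^{\delta}+(1-t)^{\delta})$ using $\max(t,1-t)\geq 1/2$, but the direct argument above is shortest and avoids any case analysis beyond the harmless symmetry reduction.
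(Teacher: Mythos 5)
Your argument is correct and is essentially identical to the paper's: reduce to the larger of the two variables (you take $u\geq v$, the paper takes $u\leq v$), bound $u+v$ by twice the larger, then drop a nonnegative term. No meaningful difference.
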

\begin{proof}
Assume without loss of generality that \(u\leq v\). Then\begin{eqnarray*}(u+v)^{\delta}&\le&(2v)^{\delta}\\
&\le&2^{\delta}(u^{\delta}+v^{\delta}),\end{eqnarray*} as desired.    
\end{proof}
Proposition~\ref{pr:exponentialDecay} gives bounds on \(\overline{\omega}\) under an exponential decay assumption on \(\omega\). Up to a polylogarithmic factor, the bound on  \(\overline{\omega}(0)\) is inversely proportional to \(\sqrt{\xi}\), where \(\xi\) is the decay rate of \(\omega\). The bound on \(\overline{\omega}(j)\) is exponentially decaying with decay rate \(\xi/2\).  
\begin{proposition}\label{pr:exponentialDecay}Let \((\omega(i),i\geq0)\) be  a non-negative sequence such that \(\omega(i)\le c\ln^{\delta}(i+2) e^{-\xi i}\) for \(i\geq0\), where  \(c\) and \(\xi\) are positive constants, with \(\xi\leq1\) and \(\delta\ge0\). Then \begin{equation}\label{eq:S0upperBoundGen}
\overline{\omega}(0)\leq \sqrt{\frac{ec}{\xi}}\left(2^{\delta+1/2}\Gamma(\frac{\delta+1}{2})+2^{\delta/2}\sqrt{2\pi}\ln^{\delta/2}(\frac{2}{\xi})\right),
\end{equation}and, for \(j\geq 0\),
\begin{equation}\label{eq:SjUpperBoundGeom}
\overline{\omega}(j)\le5\sqrt{c}{\left(\frac{\delta}{e}\right)}^{\delta/2}\frac{e^{-\xi j/2}}{\xi},
\end{equation}
where \(0^{0}=1\) by convention.
\end{proposition}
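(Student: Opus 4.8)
The plan is to prove both estimates by bounding the series $\overline{\omega}(j)=\sum_{i\ge j}\sqrt{\omega(i)/(i+1)}$ directly. The hypothesis $\omega(i)\le c\ln^{\delta}(i+2)e^{-\xi i}$ gives, for every $i\ge0$,
$\sqrt{\omega(i)/(i+1)}\le\sqrt{c}\,(i+1)^{-1/2}(\ln(i+2))^{\delta/2}e^{-\xi i/2}$,
which in particular makes $\overline{\omega}(j)$ finite. Two elementary inputs do the work: the tangent-line bound $\ln z\le z-1$, and the fact that $t\mapsto t^{\delta/2}e^{-t/2}$ is maximized on $[0,\infty)$ at $t=\delta$ with maximal value $(\delta/e)^{\delta/2}$; the latter is exactly what produces the factor $(\delta/e)^{\delta/2}$ in \eqref{eq:SjUpperBoundGeom}.

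For \eqref{eq:S0upperBoundGen}, I would first apply $\ln z\le z-1$ with $z=\frac{\xi}{2}(i+2)$ to get $\ln(i+2)\le\frac{\xi i}{2}+\xi-1+\ln(2/\xi)\le\frac{\xi i}{2}+\ln(2/\xi)$, using $\xi\le1$. Raising to the power $\delta/2$ and invoking Proposition~\ref{pr:MomentInequality} with exponent $\delta/2$ yields $(\ln(i+2))^{\delta/2}\le 2^{\delta/2}\bigl((\xi i/2)^{\delta/2}+(\ln(2/\xi))^{\delta/2}\bigr)$, so $\overline{\omega}(0)$ splits into two sums. In the first, $(i+1)^{-1/2}i^{\delta/2}\le i^{(\delta-1)/2}$ for $i\ge1$ (the $i=0$ term vanishes when $\delta>0$, and the case $\delta=0$ coincides with the second sum), and a monotone comparison with $\int_0^{\infty}x^{(\delta-1)/2}e^{-\xi x/2}\,dx=(2/\xi)^{(\delta+1)/2}\Gamma(\frac{\delta+1}{2})$, up to a factor $e^{\xi/2}\le\sqrt{e}$, controls it; collecting the powers of $2$ and $\xi$ and using $2^{(\delta+1)/2}\le2^{\delta+1/2}$ gives the term $\sqrt{ec/\xi}\,2^{\delta+1/2}\Gamma(\frac{\delta+1}{2})$. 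The second sum equals $(\ln(2/\xi))^{\delta/2}\sum_{i\ge0}(i+1)^{-1/2}e^{-\xi i/2}$, and the same comparison with $\int_0^{\infty}x^{-1/2}e^{-\xi x/2}\,dx=\sqrt{2\pi/\xi}$ (again with the factor $e^{\xi/2}\le\sqrt e$) produces $\sqrt{ec/\xi}\,2^{\delta/2}\sqrt{2\pi}\,(\ln(2/\xi))^{\delta/2}$. Adding the two terms yields \eqref{eq:S0upperBoundGen}.

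For \eqref{eq:SjUpperBoundGeom}, I would use $i+1\ge(i+2)/2$ to write $(i+1)^{-1/2}\le\sqrt{2}\,(i+2)^{-1/2}$, so $\overline{\omega}(j)\le\sqrt{2c}\sum_{i\ge j}(\ln(i+2))^{\delta/2}(i+2)^{-1/2}e^{-\xi i/2}$. The crux is the claim $\sup_{u\ge2}(\ln u)^{\delta/2}u^{-1/2}\le(\delta/e)^{\delta/2}$: the logarithmic derivative of $(\ln u)^{\delta/2}u^{-1/2}$ equals $\frac{1}{2u}\bigl(\frac{\delta}{\ln u}-1\bigr)$, so the function is unimodal; for $\delta\ge\ln2$ it is maximized at $u=e^{\delta}$ with value exactly $(\delta/e)^{\delta/2}$, while for $\delta<\ln2$ it is decreasing on $[2,\infty)$ and a short estimate shows the boundary value $(\ln2)^{\delta/2}/\sqrt{2}$ is still at most $(\delta/e)^{\delta/2}$ (the $\delta=0$ corner, where $(\delta/e)^{\delta/2}=1$ by convention, being immediate). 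Hence each summand is at most $\sqrt{2c}\,(\delta/e)^{\delta/2}e^{-\xi i/2}$, and summing the geometric series $\sum_{i\ge j}e^{-\xi i/2}=e^{-\xi j/2}/(1-e^{-\xi/2})\le\frac{8}{3\xi}e^{-\xi j/2}$ (using $1-e^{-t}\ge\frac{3}{4}t$ for $t\le\frac12$) gives $\overline{\omega}(j)\le\frac{8\sqrt{2}}{3}\sqrt{c}\,(\delta/e)^{\delta/2}e^{-\xi j/2}/\xi$, which is below the claimed $5\sqrt{c}(\delta/e)^{\delta/2}e^{-\xi j/2}/\xi$.

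I expect the only mildly delicate points to be the sum-versus-integral comparisons, where the direction of monotonicity of $x^{(\delta-1)/2}e^{-\xi x/2}$ depends on whether $\delta\ge1$ (handled by comparing the $i$-th term with the integral over $[i-1,i]$ when $\delta\le1$ and over $[i,i+1]$ when $\delta\ge1$, the latter costing the factor $e^{\xi/2}$), and the boundary case $u=2$, $\delta<\ln2$ in the supremum estimate. Everything else is routine bookkeeping of constants.
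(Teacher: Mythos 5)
Your proof is correct, and for \eqref{eq:S0upperBoundGen} it takes a genuinely different route from the paper. The paper first replaces the sum by an integral (paying a single factor $\sqrt{e}$ via $e^{-\xi i/2}\le\sqrt{e}\,e^{-\xi x/2}$), performs the Gaussian change of variables $y=\sqrt{\xi x}$, and only then splits the logarithm inside the integral via $\ln(y^{2}/\xi+2)\le\ln(y^{2}+1)+\ln(2/\xi)$ and $\ln(1+z)\le z$, so that a single sum--integral comparison with a monotone integrand suffices, and the two pieces become the moments $\int_{0}^{\infty}y^{\delta}e^{-y^{2}/2}\,dy$ and $\int_{0}^{\infty}e^{-y^{2}/2}\,dy$. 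You instead split the logarithm at the level of the sum, using the tangent-line estimate $\ln z\le z-1$ at $z=\tfrac{\xi}{2}(i+2)$, then apply Proposition~\ref{pr:MomentInequality} to produce two sums and compare each with a Gamma-type integral; the price is two separate sum--integral comparisons whose monotonicity direction depends on the sign of $\delta-1$, which you correctly handle by shifting the comparison interval and absorbing $e^{\xi/2}\le\sqrt{e}$. Your version actually yields the slightly sharper exponent $2^{(\delta+1)/2}$ in place of $2^{\delta+1/2}$, which you then discard to match the statement. For \eqref{eq:SjUpperBoundGeom} the two arguments are essentially the same — both reduce to $\sup_{x>1}\ln^{\delta}(x)/x=(\delta/e)^{\delta}$ together with a $\sqrt{2}$ slack and a geometric-series bound; your separate treatment of the boundary case $\delta<\ln 2$ is unnecessary, since the supremum over $u\ge 2$ is trivially bounded by the global supremum over $u>1$, which already equals $(\delta/e)^{\delta/2}$ after taking square roots. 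Your constant $8\sqrt{2}/3\approx 3.77$ (from $1-e^{-t}\ge\tfrac{3}{4}t$ on $[0,\tfrac12]$) is marginally better than the paper's $2\sqrt{2e}\approx 4.66$ (from $1-e^{-t}\ge t/\sqrt{e}$); both are below the stated $5$.
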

\begin{proof} By replacing \(\omega\) with \((1/c)\omega\), it can be assumed without loss of generality that \(c=1\). We have
\begin{eqnarray*}
\overline{\omega}(0)&\leq&
\sum^{\infty}_{i=0}\frac{\ln^{\delta/2}(i+2)e^{-\xi i/2}}{\sqrt{i+1}}
 \\&\le&\sqrt{e}\int^{\infty}_{0}\frac{\ln^{\delta/2}(x+2)e^{-\xi x/2}}{\sqrt{x}}dx
 \\&=& \frac{2\sqrt{e}}{\sqrt{\xi}}\int^{\infty}_{0}\ln^{\delta/2}(y^{2}/\xi+2)e^{-y^{2}/2}dy, \end{eqnarray*}
where the second equation follows from the inequality \(e^{-\xi i/2}\leq \sqrt{e}e^{-\xi x/2}\) for \(x\in[i,i+1]\), and the third equation follows from the change of variables \(y=\sqrt{\xi x}\).
On the other hand, for \(y>0\), \begin{eqnarray*}\ln^{\delta/2}(y^{2}/\xi+2)
&\le&(\ln(y^{2}+1)+\ln(\frac{2}{\xi}))^{\delta/2}\\&\le&2^{\delta/2}(\ln^{\delta/2}(y^{2}+1)+\ln^{\delta/2}(\frac{2}{\xi}))\\
&\le&2^{\delta/2}(y^{\delta}+\ln^{\delta/2}(\frac{2}{\xi})),
\end{eqnarray*}where the second equation follows from Proposition~\ref{pr:MomentInequality}, and the last one from the inequality \(\ln(1+z)\leq z\) for \(z\geq0\). Thus \begin{eqnarray*}
\int^{\infty}_{0}\ln^{\delta/2}(y^{2}/\xi+2)e^{-y^{2}/2}dy&\le&2^{\delta/2}\int^{\infty}_{0}(y^{\delta}+\ln^{\delta/2}(\frac{2}{\xi}))e^{-y^{2}/2}dy\\&=&2^{\delta-1/2}\Gamma(\frac{\delta+1}{2})+2^{\delta/2}\ln^{\delta/2}(\frac{2}{\xi})\sqrt{\frac{\pi}{2}}.
\end{eqnarray*}This implies~\eqref{eq:S0upperBoundGen}. 

We now prove~\eqref{eq:SjUpperBoundGeom}. 
For \(j\geq 0\), we have
 \begin{eqnarray*}
 \overline{\omega}(j)&\le&
 \sum^{\infty}_{i=j}\ln^{\delta/2}(i+2)\frac{e^{-\xi i/2}}{\sqrt{i+1}}
 \\&\le&\sqrt{2}{\left(\frac{\delta}{e}\right)}^{\delta/2}\sum^{\infty}_{i=j}e^{-\xi i/2}
 \\&=&\sqrt{2}{\left(\frac{\delta}{e}\right)}^{\delta/2}\frac{e^{-\xi j/2}}{1-e^{-\xi/2}}
 \\&\le&2\sqrt{2e}{\left(\frac{\delta}{e}\right)}^{\delta/2}\frac{e^{-\xi j/2}}{\xi}, \end{eqnarray*}
 where the second equation follows from the inequality \(\ln^{\delta}(x)/x\leq(\delta/e)^{\delta}\) for \(x>1\), which implies that  \(\ln^{\delta}(x+1)/x\leq2(\delta/e)^{\delta}\) for \(x\geq1\),  and the last equation follows from the inequality \(1-e^{-x}\geq x/\sqrt{e}\) for \(x\in[0,1/2]\).
 \end{proof}

Assuming  a bound on   \(\omega\)  with an exponential decay rate  \(\xi\) combined with  an additional decay assumption, Proposition~\ref{pr:exponentialGeomDecay} shows that \(\overline{\omega}(0)\) is bounded by a polylogarithmic function of \(\xi\).
\begin{proposition}\label{pr:exponentialGeomDecay}Let \((\omega(i),i\geq0)\) be  a non-negative sequence such that,  for \(i\geq0\), \(\omega(i)\le c\ln^{\delta}(i+2)\min( e^{-\xi i},1/(i+1))\), where  \(\delta\geq0\) and  \(c\) and \(\xi\) are positive constants with \(\xi\leq1\). Then\begin{displaymath}
\overline{\omega}(0)\leq7\sqrt{ c}\ln^{\delta/2+1}\left(\frac{{\delta}^2+4}{\xi^{2}}\right).
\end{displaymath}
\end{proposition}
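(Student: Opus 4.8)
The plan is to split the series $\overline{\omega}(0)=\sum_{i\ge 0}\sqrt{\omega(i)/(i+1)}$ at a well‑chosen index $i^{*}$, bounding the ``head'' $\sum_{i<i^{*}}$ by means of the polynomial‑decay part $\omega(i)\le c\ln^{\delta}(i+2)/(i+1)$ of the hypothesis, and the ``tail'' $\overline{\omega}(i^{*})$ by means of the exponential‑decay part $\omega(i)\le c\ln^{\delta}(i+2)e^{-\xi i}$ through Proposition~\ref{pr:exponentialDecay} (which in particular gives finiteness of $\overline{\omega}(0)$). Throughout I would write $L:=\ln\!\left(\frac{\delta^{2}+4}{\xi^{2}}\right)$; since $\delta\ge 0$ and $\xi\le 1$ we have $\frac{\delta^{2}+4}{\xi^{2}}\ge 4$, hence $L\ge\ln 4>1$, and the target is $\overline{\omega}(0)\le 7\sqrt{c}\,L^{\delta/2+1}$.

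I would take
\[
i^{*}:=\left\lceil\frac{1}{\xi}\left(\delta\ln(\delta+1)+2\ln(1/\xi)+2\right)\right\rceil
\]
and record two elementary facts. First, $i^{*}+1\le\frac{1}{\xi}\left(\delta\ln(\delta+1)+2\ln(1/\xi)+4\right)$ (using $\lceil x\rceil\le x+1$ and $2\le 2/\xi$); since $\xi\,\delta\ln(\delta+1)\le\delta\ln(\delta+1)\le\delta^{2}$ by $\ln(1+x)\le x$, and $\xi\left(2\ln(1/\xi)+4\right)\le 4$ because $\xi\mapsto 2\xi\ln(1/\xi)+4\xi$ is increasing on $(0,1]$, this yields $i^{*}+1\le\frac{\delta^{2}+4}{\xi^{2}}$, i.e. $\ln(i^{*}+1)\le L$. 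Second, from $\frac{\xi i^{*}}{2}\ge\frac{\delta}{2}\ln(\delta+1)+\ln(1/\xi)+1$ one gets $e^{-\xi i^{*}/2}\le(\delta+1)^{-\delta/2}\,\xi\,e^{-1}$, hence $(\delta/e)^{\delta/2}\,e^{-\xi i^{*}/2}/\xi\le\left(\frac{\delta}{e(\delta+1)}\right)^{\delta/2}e^{-1}\le e^{-1}$.

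The two estimates are then routine. For the head, using $\ln^{\delta/2}(i+2)\le\ln^{\delta/2}(i^{*}+1)$ for $i\le i^{*}-1$ and the harmonic bound $\sum_{i=0}^{i^{*}-1}\frac1{i+1}\le 1+\ln i^{*}$,
\[
\sum_{i=0}^{i^{*}-1}\sqrt{\frac{\omega(i)}{i+1}}\le\sqrt{c}\,\ln^{\delta/2}(i^{*}+1)\,(1+\ln i^{*})\le\sqrt{c}\,L^{\delta/2}(1+L)\le 2\sqrt{c}\,L^{\delta/2+1},
\]
where I used $\ln(i^{*}+1)\le L$ and $L\ge 1$. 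For the tail, Proposition~\ref{pr:exponentialDecay} applies and \eqref{eq:SjUpperBoundGeom} with $j=i^{*}$ gives $\overline{\omega}(i^{*})\le 5\sqrt{c}\,(\delta/e)^{\delta/2}e^{-\xi i^{*}/2}/\xi\le 5\sqrt{c}\,e^{-1}<2\sqrt{c}$. Adding the two bounds and using $L^{\delta/2+1}\ge 1$ gives $\overline{\omega}(0)\le 2\sqrt{c}\,L^{\delta/2+1}+2\sqrt{c}\le 4\sqrt{c}\,L^{\delta/2+1}\le 7\sqrt{c}\,L^{\delta/2+1}$.

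The delicate point — and the place where the precise form $\frac{\delta^{2}+4}{\xi^{2}}$ together with $\xi\le 1$ and $\delta\ge 0$ really matter — is the choice of $i^{*}$: it must be small enough that $\ln(i^{*}+1)\le L$, so that the head contributes only the $\delta/2$‑th power of $L$ rather than something that grows faster with $\delta$, while at the same time being large enough to absorb the factor $(\delta/e)^{\delta/2}$ coming from \eqref{eq:SjUpperBoundGeom}, which forces $i^{*}$ to grow roughly like $\delta\ln\delta/\xi$. Checking that one $i^{*}$ satisfies both requirements — essentially that $\frac{1}{\xi}\bigl(\delta\ln(\delta+1)+2\ln(1/\xi)\bigr)$ stays below $\frac{\delta^{2}+4}{\xi^{2}}$ — is the heart of the argument; once $i^{*}$ is fixed, everything reduces to the elementary inequalities $\ln(1+x)\le x$ and $\max_{\xi\in(0,1]}\xi\ln(1/\xi)=1/e$.
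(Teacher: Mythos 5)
Your proof is correct and follows essentially the same strategy as the paper's: split $\overline{\omega}(0)$ at an index $j\approx(\delta^2+2)/\xi^2$ (you use the slightly sharper $i^*\approx(\delta\ln(\delta+1)+2\ln(1/\xi))/\xi$, but both satisfy $j+1\le(\delta^2+4)/\xi^2$), bound the head via the polynomial decay and the harmonic sum, and bound the tail $\overline{\omega}(j)$ by an absolute constant via \eqref{eq:SjUpperBoundGeom} of Proposition~\ref{pr:exponentialDecay}. Your refinement of the split index even yields the tighter constant $4$ in place of $7$, but the argument is otherwise the same.
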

\begin{proof}By replacing \(\omega\) with \((1/c)\omega\), it can be assumed without loss of generality that \(c=1\). Set \(j=\lceil({\delta}^2+2)\xi^{-2}\rceil\). We have
\begin{equation*}
\overline{\omega}(0)=\sum^{j-1}_{i=0}\sqrt{\frac{\omega(i)}{i+1}}+\overline{\omega}(j).
\end{equation*}
Since \(\xi^{-1}\ge\ln(\xi^{-1})\), we have \(\xi j\geq{\delta}^2+2\ln(\xi^{-1})\). Hence \(e^{-\xi j/2}\le e^{-\delta^{2}/2}\xi\).  Proposition~\ref{pr:exponentialDecay} and the inequality \(\delta \leq e^{\delta}\) show  that \(\overline{\omega}(j)\le5\). On the other hand, 
\begin{eqnarray*}\sum^{j-1}_{i=0}\sqrt{\frac{\omega(i)}{i+1}}&\le&\ln^{\delta/2}(j+1)\sum^{j}_{i=1}\frac{1}{i}\\
&\le&(1+\ln(j))\ln^{\delta/2}(j+1).\end{eqnarray*}
Hence\begin{eqnarray*}
\overline{\omega}(0)&\leq&5+(1+\ln(j))\ln^{\delta/2}(j+1)\\
&\le&7\ln^{\delta/2+1}(j+1),
\end{eqnarray*}   where the second equation follows from the inequality \(1\leq\ln(j+1)\). As \(j+1\le({\delta}^2+4)\xi^{-2}\), this concludes the proof.
\end{proof}
We now prove Proposition~\ref{pr:UpperBoundDelta}. The sequence \(\nu\) satisfies the conditions of Proposition~\ref{pr:exponentialDecay} with \(\delta=0\). As  \(\Gamma(1/2)=\sqrt{\pi}\), \eqref{eq:S0upperBoundGen} shows that  \begin{equation*}
\overline{\nu}(0)\leq2\sqrt{\frac{2\pi e c}{\xi}}.
\end{equation*}
This implies \eqref{eq:deltaBound}.
Similarly, \eqref{eq:deltaBoundGeom} follows by applying Proposition~\ref{pr:exponentialGeomDecay} with \(\delta=0\).\qed
\section{Proof of  Lemma~\ref{le:boundVarfk0}}
We first give an upper bound on the standard deviation of \(f_{k,0}-f_{k}\). \begin{lemma}\label{le:diffStdFn0Fn}For \(k\ge1\), 
\begin{equation*}
\std(f_{k,0}-f_{k})\le \frac{25\overline{\nu}(0)}{k}\sqrt{b'(k)-b(k)}.  
\end{equation*}\end{lemma}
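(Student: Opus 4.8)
The plan is to write $f_{k,0}-f_{k}$ as an explicit linear combination of two sums of consecutive $f(X_{i})$'s, apply the sub-linearity of the standard deviation, bound each of the two sums via Lemma~\ref{le:StdSum}, and finally use the hypothesis $b(k)\le b'(k)\le k/2$ to collect the powers of $k$.

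Concretely, set $\Delta:=b'(k)-b(k)$. If $\Delta=0$ then $f_{k,0}=f_{k}$ and the inequality is trivial, so assume $\Delta\ge1$. Split the sum defining $f_{k}$ at index $b'(k)$, writing $S':=\sum_{i=b(k)}^{b'(k)-1}f(X_{i})$ and $S:=\sum_{i=b'(k)}^{k-1}f(X_{i})$, so that $S'$ has $\Delta$ terms, $S$ has $k-b'(k)$ terms, $(k-b'(k))f_{k,0}=S$, and $(k-b(k))f_{k}=S'+S$. Since $1/(k-b'(k))-1/(k-b(k))=\Delta/\bigl((k-b(k))(k-b'(k))\bigr)$, a short computation gives
\begin{equation*}
f_{k,0}-f_{k}=\frac{\Delta}{(k-b(k))(k-b'(k))}\,S-\frac{1}{k-b(k)}\,S'.
\end{equation*}
By the sub-linearity of the standard deviation,
\begin{equation*}
\std(f_{k,0}-f_{k})\le\frac{\Delta}{(k-b(k))(k-b'(k))}\,\std(S)+\frac{1}{k-b(k)}\,\std(S').
\end{equation*}

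Next I would apply Lemma~\ref{le:StdSum}: used with burn-in $b'(k)$ and $k-b'(k)$ time-steps it yields $\std(S)\le5\overline{\nu}(0)\sqrt{k-b'(k)}$, and used with burn-in $b(k)$ and $\Delta$ time-steps it yields $\std(S')\le5\overline{\nu}(0)\sqrt{\Delta}$. Substituting,
\begin{equation*}
\std(f_{k,0}-f_{k})\le5\overline{\nu}(0)\left(\frac{\Delta}{(k-b(k))\sqrt{k-b'(k)}}+\frac{\sqrt{\Delta}}{k-b(k)}\right).
\end{equation*}
Since $b(k),b'(k)\le k/2$ we have $k-b(k)\ge k/2$ and $k-b'(k)\ge k/2$, and since $\Delta\le b'(k)\le k$ we have $\Delta\le k$; hence the first term in the parentheses is at most $2\sqrt{2}\,\Delta/k^{3/2}\le2\sqrt{2}\,\sqrt{\Delta}/k$ and the second is at most $2\sqrt{\Delta}/k$. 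Therefore $\std(f_{k,0}-f_{k})\le5(2\sqrt{2}+2)\overline{\nu}(0)\sqrt{\Delta}/k$, and as $5(2\sqrt{2}+2)<25$ this is the claimed bound.

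I do not anticipate a genuine obstacle; the argument is essentially bookkeeping around Lemma~\ref{le:StdSum}. The only two points needing a little care are the degenerate case $\Delta=0$ (disposed of at the outset, and which in particular covers $k=1$) and checking that the accumulated numerical constant stays below $25$ — the value $5(2+2\sqrt{2})\approx24.1$ leaves a comfortable margin, so no tightening of the intermediate estimates is required.
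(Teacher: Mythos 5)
Your proof is correct and follows essentially the same route as the paper's: the same algebraic decomposition of $f_{k,0}-f_{k}$ into a rescaled copy of $\sum_{i=b'(k)}^{k-1}f(X_i)$ minus $\tfrac{1}{k-b(k)}\sum_{i=b(k)}^{b'(k)-1}f(X_i)$, followed by sub-linearity of the standard deviation, two applications of Lemma~\ref{le:StdSum}, and the constraints $b(k)\le b'(k)\le k/2$ to arrive at the constant $5(2+2\sqrt{2})<25$.
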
 
\begin{proof}
We have
\begin{eqnarray*}
f_{k,0}-f_{k} &=&\frac{\sum_{i=b'(k)}^{k-1}f(X_{i})}{k-b'(k)}-\frac{\sum_{i=b(k)}^{k-1}f(X_{i})}{k-b(k)}\\
&=&\frac{\sum_{i=b'(k)}^{k-1}f(X_{i})}{k-b'(k)}-\frac{\sum_{i=b'(k)}^{k-1}f(X_{i})}{k-b(k)}+\frac{\sum_{i=b'(k)}^{k-1}f(X_{i})}{k-b(k)}-\frac{\sum_{i=b(k)}^{k-1}f(X_{i})}{k-b(k)}\\
&=&\frac{b'(k)-b(k)}{(k-b(k))(k-b'(k))}\sum_{i=b'(k)}^{k-1}f(X_{i})-\frac{\sum_{i=b(k)}^{b'(k)-1}f(X_{i})}{k-b(k)}.
\end{eqnarray*}
By sub-linearity of the standard deviation and the inequalities \(b(k)\leq b'(k)\leq k/2\), this implies that\begin{equation*}
\std(f_{k,0}-f_{k})\le4\frac{b'(k)-b(k)}{k^{2}}\std(\sum_{i=b'(k)}^{k-1}f(X_{i}))+\frac{2}{k}\std(\sum_{i=b(k)}^{b'(k)-1}f(X_{i})).
\end{equation*}  Using Lemma~\ref{le:StdSum}, it follows that\begin{equation*}
\std(f_{k,0}-f_{k})\le20\overline{\nu}(0)\frac{b'(k)-b(k)}{k^{3/2}}+\frac{10\overline{\nu}(0)}{k}\sqrt{b'(k)-b(k)}.\end{equation*} As  \(  b'(k)-b(k)\le k/2\), this concludes the proof. \end{proof}
We now prove Lemma~\ref{le:boundVarfk0}. By Lemma~\ref{le:diffStdFn0Fn} and the sub-linearity of the standard deviation, \begin{eqnarray*}
\var(f_{k,0})&\le&\left(\frac{25\overline{\nu}(0)}{k}\sqrt{b'(k)-b(k)}+\std(f_{k})\right)^{2}\\
&=&\frac{625\overline{\nu}(0)^{2}}{k^{2}}(b'(k)-b(k))+\frac{50\overline{\nu}(0)}{k}\sqrt{b'(k)-b(k)}\std(f_{k})+\var(f_{k}).
\end{eqnarray*}
We bound the first term by noting that\begin{displaymath}
b'(k)-b(k)\leq \sqrt{\frac{k(b'(k)-b(k))}{2}},
\end{displaymath} and the second term using the relation
\begin{equation}\label{eq:StdFnBound}
\std(f_{k})\leq\frac{5\sqrt{2}\overline{\nu}(0)}{\sqrt{k}}, 
\end{equation} which is a consequence of Lemma~\ref{le:StdSum}.
The lemma  follows after some simplifications.\qed
\section{Proof of Lemma~\ref{le:generalPl}}
For \(l\geq0\), let  \begin{displaymath}
\nu_{k,l}:=\frac{\sum ^{k2^{l}-1}_{i=b'(k)+k(2^{l}-1)}\nu(i)}{k-b'(k)}.
\end{displaymath}
Lemma~\ref{le:boundOnRhonl} gives bounds on the \(\nu_{k,l}\)'s. 
\begin{lemma}\label{le:boundOnRhonl}
For \(l\geq0\),  \begin{equation}\label{eq:rhonlGen}
k\nu_{k,l}\leq 2(\overline{\nu}(\lfloor{b'(k)}/{2}\rfloor))^{2},
\end{equation}
and, for \(l\geq2\),
\begin{equation}\label{eq:rhonlLargel}
k\nu_{k,l}\le2^{3-l}(\overline{\nu}(k2^{l-2})-\overline{\nu}(k2^{l-1}))^{2}.
\end{equation}
\end{lemma}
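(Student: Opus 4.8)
The plan is to read $\nu_{k,l}$ as the average of $\nu$ over the block of indices $[h,h']$ with $h:=b'(k)+k(2^{l}-1)$ and $h':=k2^{l}-1$; this block has exactly $h'-h+1=k-b'(k)$ terms, so $\nu_{k,l}=\frac{1}{k-b'(k)}\sum_{i=h}^{h'}\nu(i)$ and $k\nu_{k,l}=\frac{k}{k-b'(k)}\sum_{i=h}^{h'}\nu(i)$. Since $b'(k)\le k/2$, the prefactor $k/(k-b'(k))$ is at most $2$, which accounts for the constant $2$ in both displayed bounds; the real work is to bound the block sum $\sum_{i=h}^{h'}\nu(i)$ in two different ways.

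For \eqref{eq:rhonlGen} I would enlarge the block downwards: since $h\ge b'(k)\ge 2\lfloor b'(k)/2\rfloor$ and the terms of $\nu$ are non-negative, $\sum_{i=h}^{h'}\nu(i)\le\sum_{i=2\lfloor b'(k)/2\rfloor}^{h'}\nu(i)$. Now Proposition~\ref{pr:2kk'}, applied with $\lfloor b'(k)/2\rfloor$ in the role of $h$ (its hypothesis $2\lfloor b'(k)/2\rfloor\le h'$ holds because $2\lfloor b'(k)/2\rfloor\le b'(k)\le h\le h'$), bounds this sum by $(\overline{\nu}(\lfloor b'(k)/2\rfloor))^{2}$. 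Multiplying by the prefactor $\le 2$ gives \eqref{eq:rhonlGen}.

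For \eqref{eq:rhonlLargel} I would instead exploit that $\nu$ is decreasing: $\sum_{i=h}^{h'}\nu(i)\le(k-b'(k))\nu(h)$, so $k\nu_{k,l}\le k\nu(h)$, and since $h\ge k(2^{l}-1)\ge k2^{l-1}$ (valid since $l\ge 2$) we get $k\nu_{k,l}\le k\nu(k2^{l-1})$. It then remains to check $k\nu(k2^{l-1})\le 2^{3-l}(\overline{\nu}(k2^{l-2})-\overline{\nu}(k2^{l-1}))^{2}$. Writing $\overline{\nu}(k2^{l-2})-\overline{\nu}(k2^{l-1})=\sum_{i=k2^{l-2}}^{k2^{l-1}-1}\sqrt{\nu(i)/(i+1)}$ and bounding each of these $k2^{l-2}$ summands below by $\sqrt{\nu(k2^{l-1})/(k2^{l-1})}$ (again because $\nu$ and $1/(i+1)$ are decreasing over the range) yields $\overline{\nu}(k2^{l-2})-\overline{\nu}(k2^{l-1})\ge k2^{l-2}\sqrt{\nu(k2^{l-1})/(k2^{l-1})}=\sqrt{k}\,2^{(l-3)/2}\sqrt{\nu(k2^{l-1})}$; squaring gives $(\overline{\nu}(k2^{l-2})-\overline{\nu}(k2^{l-1}))^{2}\ge k2^{l-3}\nu(k2^{l-1})$, which is exactly the inequality wanted after multiplying by $2^{3-l}$. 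Chaining the estimates proves \eqref{eq:rhonlLargel}.

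The only mildly delicate point is the lower bound on the increment $\overline{\nu}(k2^{l-2})-\overline{\nu}(k2^{l-1})$ used for \eqref{eq:rhonlLargel}: whereas Proposition~\ref{pr:2kk'} supplies an \emph{upper} bound on a tail sum of $\nu$, here one needs a \emph{lower} bound on a partial sum of $\sqrt{\nu(i)/(i+1)}$, so the monotonicity of $\nu$ must be used in the opposite direction, and the exponents of $2$ have to be tracked carefully so that the constant emerges as $2^{3-l}$ rather than something weaker. Everything else is routine manipulation using $b'(k)\le k/2$, the monotonicity of $\nu$, and the identity $h'-h+1=k-b'(k)$.
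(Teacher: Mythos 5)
Your proposal is correct and takes essentially the same route as the paper: for \eqref{eq:rhonlGen} you enlarge the block sum downward and invoke Proposition~\ref{pr:2kk'}, then absorb the factor $k/(k-b'(k))\le2$; for \eqref{eq:rhonlLargel} you use monotonicity of $\nu$ to bound $\nu_{k,l}$ by $\nu(k2^{l-1})$ and lower-bound the increment $\overline\nu(k2^{l-2})-\overline\nu(k2^{l-1})$ term by term, exactly as in the paper's proof.
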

\begin{proof}
Applying Proposition~\ref{pr:2kk'} with \(h=\lfloor b'(k)/2\rfloor\) and \(h'=k2^{l}-1\) shows that \((k-b'(k))\nu_{k,l}\leq(\overline{\nu}(\lfloor b'(k)/2\rfloor))^{2}\). Since  \(b'(k)\leq k/2\), this implies~\eqref{eq:rhonlGen}. 

We now prove~\eqref{eq:rhonlLargel}. Let \(l\ge2\). As \(k(2^{l}-1)\geq k2^{l-1}\), for any \(i\geq k(2^{l}-1)\), we have  \(\nu(i)\leq\nu(k2^{l-1}) \). Since \(\nu_{k,l}\) is the average value of   \(\nu(i)\), where \(i\) ranges in \([b'(k)+k(2^{l}-1), k2^{l}-1]\),  it follows that \(\nu_{k,l}\leq\nu(k2^{l-1}) \). Consequently,\begin{eqnarray*}\overline{\nu}(k2^{l-2})-\overline{\nu}(k2^{l-1})&=&\sum^{k2^{l-1}-1}_{i=k2^{l-2}}\sqrt{\frac{\nu(i)}{i+1}}\\
&\ge&k2^{l-2}\sqrt{\frac{\nu(k2^{l-1})}{k2^{l-1}}}\\
&=&\sqrt{\nu(k2^{l-1})k2^{l-3}}\\
&\ge&\sqrt{\nu_{k,l}k2^{l-3}},
\end{eqnarray*}
where the second equation follows from the inequality \(\nu(i)\geq\nu(k2^{l-1})\) for \(i\leq k2^{l-1}\). Hence~\eqref{eq:rhonlLargel}.
\end{proof}
For \(l\geq0\), set \(Y_{l}=f_{k,l+1}-f_{k,0}\), with \(Y_{-1}=0\). By~\eqref{eq:ZkDef}, \(Z_{k}=(Y_{N}-Y_{N-1})/p_{N}\). \begin{lemma}\label{le:upperBoundDiffY}
For \(l\geq0\), \begin{equation*}
E((Y_{l}-Y_{l-1})^{2})\leq\nu_{k,l}.
\end{equation*}
\end{lemma}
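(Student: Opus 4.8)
The plan is to express $Y_l - Y_{l-1} = f_{k,l+1} - f_{k,l}$ directly from the definition $Y_l = f_{k,l+1} - f_{k,0}$, so that the $f_{k,0}$ term cancels. Then I would write $f_{k,l+1} - f_{k,l}$ as a single average, using \eqref{eq:fklDef}:
\begin{equation*}
f_{k,l+1} - f_{k,l} = \frac{1}{k-b'(k)}\sum_{i=b'(k)}^{k-1}\bigl(f(X_{i,k(2^{l+1}-1)}) - f(X_{i,k(2^{l}-1)})\bigr).
\end{equation*}
The key structural observation is that both $X_{i,k(2^{l+1}-1)}$ and $X_{i,k(2^{l}-1)}$ are of the form $X_{i,m}$ and $X_{i,m'}$ for suitable $m, m'$, so Proposition~\ref{pr:shiftedPair} can be invoked to rewrite each summand's second moment. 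Specifically, with $m = k(2^{l}-1)$ and $m' = k(2^{l+1}-1)$, Proposition~\ref{pr:shiftedPair} gives
\begin{equation*}
E\bigl((f(X_{i,m'}) - f(X_{i,m}))^{2}\bigr) = E\bigl((f(X_{i+m,m'-m}) - f(X_{i+m}))^{2}\bigr) \le \nu(i+m),
\end{equation*}
where the inequality is \eqref{eq:rhoDef} applied at index $i+m$ (which is $\ge 0$), since $m' - m = k2^l \ge 0$. Note $i + m = i + k(2^l - 1)$ ranges over $[b'(k) + k(2^l-1),\, k2^l - 1]$ as $i$ ranges over $[b'(k), k-1]$, which is exactly the index range appearing in the definition of $\nu_{k,l}$.

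From here I would bound the second moment of the sum by the square of the sum of standard deviations (sub-linearity of the standard deviation / triangle inequality in $L^2$), then apply Cauchy--Schwarz (or convexity of $t \mapsto t^2$) to pull the $1/(k-b'(k))$ factor through:
\begin{equation*}
E\bigl((f_{k,l+1} - f_{k,l})^{2}\bigr) \le \frac{1}{k-b'(k)}\sum_{i=b'(k)}^{k-1} E\bigl((f(X_{i,m'}) - f(X_{i,m}))^{2}\bigr) \le \frac{1}{k-b'(k)}\sum_{i=b'(k)}^{k-1}\nu(i+m),
\end{equation*}
and the last expression equals $\nu_{k,l}$ after re-indexing $j = i + m$. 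The case $l = 0$ is consistent since then $m = 0$, $Y_0 - Y_{-1} = Y_0 = f_{k,1} - f_{k,0}$, and the argument goes through identically with $\nu_{k,0}$.

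The main obstacle — more bookkeeping than genuine difficulty — is tracking the shift indices carefully so that Proposition~\ref{pr:shiftedPair} applies with the right signs (one needs $n \ge -m$ and $n \ge -m'$, i.e. $i + k(2^{l+1}-1) \ge 0$ and $i + k(2^l-1) \ge 0$, both automatic here) and so that the resulting index range of $\nu(\cdot)$ matches precisely the range in the definition of $\nu_{k,l}$. One should also double-check that using the $L^2$-triangle inequality followed by convexity is the cleanest route; an alternative is to apply convexity first to get $E((f_{k,l+1}-f_{k,l})^2) \le \frac{1}{k-b'(k)}\sum_i E((f(X_{i,m'})-f(X_{i,m}))^2)$ in one step, which avoids cross terms entirely and is the approach I would actually take.
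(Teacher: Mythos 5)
Your proposal is correct and follows the same route as the paper: express $Y_l - Y_{l-1} = f_{k,l+1} - f_{k,l}$ as an average, apply Proposition~\ref{pr:shiftedPair} together with \eqref{eq:rhoDef} to bound each summand's second moment by $\nu(i+k(2^l-1))$, and pass the square inside the average (the paper labels this Cauchy--Schwarz; it is the same one-step convexity bound you ultimately prefer), then re-index to recognize $\nu_{k,l}$.
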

\begin{proof}
It follows from Proposition~\ref{pr:shiftedPair} and~\eqref{eq:rhoDef} that \(E((f(X_{i,m'})-f(X_{i,m}))^{2})\leq\nu(i+m)\)  for \(i\geq0\) and \(0\leq m\leq m'\). Consequently,         \(E((f(X_{i,k(2^{l+1}-1)})-f(X_{i,k(2^{l}-1)}))^{2})\leq\nu(i+k(2^{l}-1))\) for  \(i,l\geq0\). For \(l\geq0\), we have\begin{eqnarray*}
Y_{l}-Y_{l-1}&=&f_{k,l+1}-f_{k,l}\\
&=&\frac{\sum ^{k-1}_{i=b'(k)}f(X_{i,k(2^{l+1}-1)})-f(X_{i,k(2^{l}-1)})}{k-b'(k)}. \end{eqnarray*}By the Cauchy-Schwarz inequality, \begin{eqnarray*}
E((Y_{l}-Y_{l-1})^{2})&\le&\frac{\sum ^{k-1}_{i=b'(k)}E((f(X_{i,k(2^{l+1}-1)})-f(X_{i,k(2^{l}-1)}))^{2})}{k-b'(k)}\\ &\le&\frac{\sum ^{k-1}_{i=b'(k)}\nu(i+k(2^{l}-1))}{k-b'(k)}\\
&=&\nu_{k,l}.
\end{eqnarray*}
\end{proof}
We now prove Lemma~\ref{le:generalPl}. The expected cost of computing  \(f_{k,l+1}-f_{k,l}\) is at most  \(3k2^{l}\). Thus  \(T_{k}\le3k\sum^{\infty}_{l=0}2^{l}p_{l}\).
 For any \(i\geq0\), as \(l\) goes to infinity, \(f(X_{i,k(2^{l}-1)})\) converges to \(\mu\) by Theorem~\ref{th:biasedLongRun}. Hence, by the definitions of \(f_{k,l}\) and of \(Y_{l}\), as \(l\) goes to infinity, \(E(f_{k,l})\) converges to \(\mu\) and \(E(Y_l)\) converges to \(\mu':=\mu-E(f_{k,0})\).  
If the right-hand side of \eqref{eq:ZNSecondMomentGeneralCase} is infinite, then  \eqref{eq:ZNSecondMomentGeneralCase} clearly holds. Assume now that the right-hand side of \eqref{eq:ZNSecondMomentGeneralCase} is finite.   Combining  Lemmas~\ref{le:boundOnRhonl} and~\ref{le:upperBoundDiffY} shows that, for \(0\leq l\leq1\),  \begin{equation*}
kE((Y_{l}-Y_{l-1})^{2})\leq 2(\overline{\nu}(\lfloor{b'(k)}/{2}\rfloor))^{2},
\end{equation*}
and, for \(l\geq2\),
\begin{equation*}
kE((Y_{l}-Y_{l-1})^{2})\le2^{3-l}(\overline{\nu}(k2^{l-2})-\overline{\nu}(k2^{l-1}))^{2}.
\end{equation*}
Hence
\(k\sum^{\infty}_{l=0}{E((Y_{l}-Y_{l-1})^{2})}/{p_{l}}\) is upper-bounded by the right-hand side of \eqref{eq:ZNSecondMomentGeneralCase}. 
 Theorem~\ref{th:Glynn}   shows that \(Z_{k}\) is square-integrable, that \eqref{eq:ZNSecondMomentGeneralCase} holds, and that \(E(Z_{k})=\mu'\). Consequently, \(E(f_{k,0} + Z_k) = \mu\).
\qed
\section{Proof of Lemma~\ref{le:specialPl}}
Denote by \(M\) the right-hand side of \eqref{eq:ZNSecondMomentGeneralCase}. As    \(p_{1}\ge1/4\) and    \(p_{0}\ge1/2\), we have
\begin{eqnarray*}
M
&\le& 12(\overline{\nu}(\lfloor{b'(k)}/{2}\rfloor))^{2}+8\overline{\nu}(k)\sum^{\infty}_{l=2}  (\overline{\nu}(k2^{l-2})-\overline{\nu}(k2^{l-1}))\\
&=&12(\overline{\nu}(\lfloor{b'(k)}/{2}\rfloor))^{2}+8(\overline{\nu}(k))^{2}\\
&\le&20(\overline{\nu}(\lfloor{b'(k)}/{2}\rfloor))^{2}.
\end{eqnarray*}Hence \(M\) is finite. By Lemma~\ref{le:generalPl}, this implies \eqref{eq:ZNSecondMoment} and that  \(E(f_{k,0}+Z_{k})=\mu\).
By~\eqref{eq:pr2^lnudep},
\begin{eqnarray*}
\sum^{\infty}_{l=0}2^{l}p_{l}&=&p_{0}+2p_{1}+1\\&\le&2p_{0}+2p_{1}+1\\&\le&3.\end{eqnarray*}
Using Lemma~\ref{le:generalPl}, it follows that   \(T_{k}\leq9k\).  
\qed
\section{Motivation for \eqref{eq:defq}}\label{se:motivationforq}
We first show the following.
\begin{proposition}\label{pr:optimalq}
Let \(V_{1}\) and \(V_{2}\) be independent square integrable random variables with finite expected running times \(\tau_{1}\) and \(\tau_{2}\). Let \(Q\) be a binary random variable independent of \((V_{1},V_{2})\), with \(\Pr(Q=1)=q\), where \(q\in(0,1]\). Set \(V=V_{1}+q^{-1}QV_{2}\). Let \(\tau\) be the expected  time to simulate \(V\). Then\begin{equation}\label{eq:varVtauBound}
\var(V)\tau\le(\var(V_{1})+q^{-1}E({V_{2}}^{2}))(\tau_{1}+q\tau_{2}).
\end{equation} If  \(0<E({V_{2}}^{2})\tau_{1}\le\var(V_{1})\tau_{2}\) then the RHS of \eqref{eq:varVtauBound} is minimized when \begin{equation}\label{eq:optimalq}
q=\sqrt{\frac{E({V_{2}}^{2})\tau_{1}}{\var(V_{1})\tau_{2}}}.
\end{equation}
\end{proposition}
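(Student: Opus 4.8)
The plan is to evaluate $\var(V)$ and $\tau$ in closed form, bound the variance from above, and then minimize the resulting product over $q$ by a one‑variable argument.

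First I would compute the variance. Since $Q$ is independent of $(V_1,V_2)$ and $V_1$ is independent of $V_2$, the three variables $V_1,V_2,Q$ are mutually independent, so $V_1$ and $q^{-1}QV_2$ are independent and $\var(V)=\var(V_1)+q^{-2}\var(QV_2)$. Because $Q$ is binary we have $Q^2=Q$, hence $E(Q^2)=E(Q)=q$; together with the independence of $Q$ and $V_2$ this gives $\var(QV_2)=qE(V_2^2)-q^2(E(V_2))^2$, so that $q^{-2}\var(QV_2)=q^{-1}E(V_2^2)-(E(V_2))^2\le q^{-1}E(V_2^2)$, and therefore $\var(V)\le\var(V_1)+q^{-1}E(V_2^2)$. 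For the running time, simulating $V$ amounts to simulating $V_1$ (expected cost $\tau_1$) and then, independently, simulating a copy of $V_2$ (expected cost $\tau_2$) only on the event $\{Q=1\}$, which has probability $q$; hence $\tau=\tau_1+q\tau_2$. Multiplying the variance bound by this identity gives \eqref{eq:varVtauBound}.

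For the second claim, write $a:=\var(V_1)$ and $b:=E(V_2^2)$ and expand the right-hand side of \eqref{eq:varVtauBound} as $a\tau_1+b\tau_2+(a\tau_2)q+(b\tau_1)q^{-1}$. The first two terms do not depend on $q$; under the hypothesis $0<b\tau_1\le a\tau_2$, which forces $a,b,\tau_1,\tau_2>0$, the arithmetic--geometric mean inequality gives $(a\tau_2)q+(b\tau_1)q^{-1}\ge 2\sqrt{ab\tau_1\tau_2}$, with equality if and only if $(a\tau_2)q=(b\tau_1)q^{-1}$, i.e.\ $q=\sqrt{b\tau_1/(a\tau_2)}$, and the hypothesis $b\tau_1\le a\tau_2$ guarantees that this minimizer lies in $(0,1]$. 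This is \eqref{eq:optimalq} and completes the argument. There is no genuinely hard step here; the only points that need care are justifying $\tau=\tau_1+q\tau_2$ from the simulation scheme, the mutual independence of $V_1,V_2,Q$, and the use of $Q^2=Q$ when computing $\var(QV_2)$.
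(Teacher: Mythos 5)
Your proof is correct and follows essentially the same route as the paper: both bound $\var(QV_2)$ by $qE(V_2^2)$, identify $\tau=\tau_1+q\tau_2$, and then minimize $(a\tau_2)q+(b\tau_1)q^{-1}$ (the paper just calls this "a standard calculation," which you spell out via AM--GM). You add two small explicit touches the paper omits—the exact expression $\var(QV_2)=qE(V_2^2)-q^2(E(V_2))^2$ and the check that the minimizer lies in $(0,1]$—but the argument is the same.
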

\begin{proof}
We have\begin{eqnarray*}
\var(QV_{2})&\le&E((Q{V_{2}})^{2})\\&=& qE({V_{2}}^{2}).
\end{eqnarray*}
Hence
\begin{eqnarray}\label{eq:varV}
\var(V)&=&\var(V_{1})+q^{-2}\var(QV_{2})\nonumber\\
&\le&\var(V_{1})+q^{-1}E({V_{2}}^{2}).
\end{eqnarray}
Simulating \(V\) requires to simulate \(V_{1}\) and, when \(Q=1\), to simulate \(V_{2}\). Thus \(\tau=\tau_{1}+q\tau_{2}\). A standard calculation implies \eqref{eq:optimalq}.\end{proof}By Lemma~\ref{le:generalPl} and \eqref{eq:pr2^lnudep}, the expected running times of \(Z_{k}\) and of \(f_{k,0}\) are  of order \(k\). Since the length of the time-averaging period in  \(f_{k,0}\) is at least \(k/2\), it follows from Lemma~\ref{le:StdSum} that \begin{equation*}
\var(f_{k,0})\leq\frac{50\overline{\nu}(0)^{2}}{k}.
\end{equation*}Similarly, Lemma~\ref{le:specialPl} gives an upper bound on  \(E(Z_{k}^{2})\). Applying Proposition~\ref{pr:optimalq} with \(V_{1}=f_{k,0}\) and \(V_{2}=Z_{k}\) and replacing \(\var(f_{k,0})\) and \(E(Z_{k}^{2})\) with their upper bounds yields \eqref{eq:defq}, up to a multiplicative factor. \qed    
\section{Proof of Theorem~\ref{th:main}}        
It follows from Lemma~\ref{le:specialPl}  and the definition of \(\hat f_{k}\) that \(E(\hat f_{k})=E(f_{k,0})+E(Z_{k})=\mu\). Also, by \eqref{eq:varV},\begin{equation*}
\var(\hat f_{k})\le\var(f_{k,0}) +q^{-1}E(Z_{k}^{2}).
\end{equation*}
By Lemma~\ref{le:boundVarfk0}, \begin{displaymath}
\var(f_{k,0})\leq\frac{796({\overline\nu}(0))^{2}\beta(k)}{k}+\var(f_{k}),
\end{displaymath}
where
\begin{displaymath}
\beta(k):=\max\left(q,\sqrt{\frac{b'(k)-b(k)}{k}}\right).
\end{displaymath}Furthermore,
by Lemma~\ref{le:specialPl}, \begin{eqnarray*}
q^{-1}E(Z_{k}^{2})&\le&\frac{20(\overline{\nu}(\lfloor{b'(k)}/{2}\rfloor))^{2}}{qk}
\\&=&\frac{20( \overline{\nu}(0))^{2} q}{k}\\
&\le&\frac{20( \overline{\nu}(0))^{2}\beta(k)}{k}.
\end{eqnarray*}
Thus, \begin{equation*}
\var(\hat f_{k})\leq\var(f_{k})+ \frac{816({\overline\nu}(0))^{2}\beta(k)}{k}.
\end{equation*}
Lemma~\ref{le:specialPl} shows that  \(\hat T_{k}\le k+9qk\leq k(1+9\beta(k))\).  Hence,
 \begin{eqnarray*}
 \hat T_{k}\var(\hat f_{k})
 &\leq& (k\var(f_{k})+816({\overline\nu}(0))^{2}\beta(k))(1+9\beta(k))\\
&=& k\var(f_{k})+9k\var(f_{k})\beta(k)+816({\overline\nu}(0))^{2}\beta(k)+7344({\overline\nu}(0))^{2}(\beta(k))^{2}.
  \end{eqnarray*} 
As, by \eqref{eq:StdFnBound}, \(k\var(f_{k})\le50({\overline\nu}(0))^{2}\), and  \(\beta(k)\leq1\), this implies \eqref{eq:timeVarianceUBound} after some calculations.\qed
\section{Proof of Lemma~\ref{le:ObliviousPl}}
Denote by \(M\) the right-hand side of \eqref{eq:ZNSecondMomentGeneralCase}. By \eqref{eq:obliviouspl} and the monotonicity of \(\theta\), we have \(p_{l}\ge2^{-l-1}/\theta(l)\) for \(l\geq0\). As \(\theta(0)=\theta(1)=1\), it follows that \begin{equation*}
M\le12(\overline{\nu}(\lfloor{b'(k)}/{2}\rfloor))^{2}+16\sum^{\infty}_{l=2}\theta(l)(\overline{\nu}(k2^{l-2})-\overline{\nu}(k2^{l-1}))^{2}.
\end{equation*}
For \(l\geq2\), \begin{eqnarray*}\sqrt{\theta(l)}(\overline{\nu}(k2^{l-2})-\overline{\nu}(k2^{l-1}))&=&\sqrt{\theta(l)}\sum^{k2^{l-1}-1}_{i=k2^{l-2}}\sqrt{\frac{\nu(i)}{i+1}}\\
&\le&\sum^{k2^{l-1}-1}_{i=k2^{l-2}}\sqrt{\frac{\nu(i)\theta(\log_{2}(4i+1))}{i+1}}\\&=& \overline{\nu}_{\theta}(k2^{l-2})-\overline{\nu}_{\theta}(k2^{l-1}),\end{eqnarray*}
where the second equation follows from the monotonicity of \(\theta\). Hence, \begin{eqnarray*}\sum^{\infty}_{l=2}\theta(l)(\overline{\nu}(k2^{l-2})-\overline{\nu}(k2^{l-1}))^{2}&\le&\sum^{\infty}_{l=2}(\overline{\nu}_{\theta}(k2^{l-2})-\overline{\nu}_{\theta}(k2^{l-1}))^{2}\\
&\le&\overline{\nu}_{\theta}(k)\sum^{\infty}_{l=2}(\overline{\nu}_{\theta}(k2^{l-2})-\overline{\nu}_{\theta}(k2^{l-1}))\\&=&(\overline{\nu}_{\theta}(k))^{2}\\&\le&(\overline{\nu}_{\theta}(\lfloor{b'(k)}/{2}\rfloor))^{2},
\end{eqnarray*}
where the second and the last equation follow from the fact that  \(\overline{\nu}_{\theta}(i)\) is a decreasing function of \(i\). Thus \(M\le28(\overline{\nu}_{\theta}(\lfloor{b'(k)}/{2}\rfloor))^{2}\). Together with Lemma~\ref{le:generalPl}, this implies \eqref{eq:ZNSecondMomentOblivious} and that  \(E(f_{k,0}+Z_{k})=\mu\).  As \(p_{l}\le2^{-l}/\theta(l)\),  Lemma~\ref{le:generalPl} implies the desired bound on \(T_{k}\).   
\qed
\section{Proof of Theorem~\ref{th:maintheta}}
It follows from Lemma~\ref{le:ObliviousPl}  and the definition of \(\hat f_{k}\) that \(E(\hat f_{k})=E(f_{k,0})+E(Z_{k})=\mu\). Also, by definition of \(\hat f_{k}\) and \eqref{eq:varV},\begin{equation*}
\var(\hat f_{k})\le\var(f_{k,0}) +q^{-1}E(Z_{k}^{2}).
\end{equation*}
Lemma~\ref{le:boundVarfk0} implies that \begin{displaymath}
\var(f_{k,0})\leq\frac{796(\overline{\nu}(0))^{2}}{k}\sqrt{\frac{b'(k)-b(k)}{k}}+\var(f_{k}).
\end{displaymath}
Together with \eqref{eq:ZNSecondMomentOblivious}, this implies \eqref{eq:varfHatkOblivious}.
The desired bound on    \(\hat T_{k}\) follows from the bound on \(T_{k}\) in Lemma~\ref{le:ObliviousPl}.\qed
\section{Proof of Theorem~\ref{th:ObliviousPolynomialZeta} }
A standard calculation shows that \begin{displaymath}
\sum^{\infty}_{l=2}\frac{1}{\theta(l)}\leq\frac{1}{\delta-1},
\end{displaymath}
which implies  \eqref{eq:infiniteSumAssumptionZeta}. Also,   \eqref{eq:infiniteSumAssumptionZetaVu} follows from Assumption A2. Thus, Assumption A3 holds. By the inequality \(\delta\leq2\),
Theorem~\ref{th:maintheta} implies \eqref{eq:A2T^kbound}. Set \(\omega(i)=\nu(i)\theta(\log_{2}(4i+1))\) for \(i\geq0\).  For \(j\geq0\), we have \(\overline\omega(j)=\overline{\nu}_{\theta}(j)\). For \(i\geq0\),\begin{eqnarray*}\omega(i)&\leq&ce^{-\xi i}\max(1,\log_{2}(4i+1))^{\delta}\\
&\leq& ce^{-\xi i}(\log_{2}(4i+2))^{\delta}\\
&\leq& ce^{-\xi i}(2\log_{2}(i+2))^{\delta}\\
&\leq& \frac{4c}{\ln^{2}(2)} \ln^{\delta}(i+2)e^{-\xi i},
\end{eqnarray*} 
where the third equation follows from the inequality \(4i+2\leq(i+2)^{2}\), and the  last equation follows from the inequality \(\delta\leq2\). Using the inequality \(\delta\leq2\) once again,  \eqref{eq:S0upperBoundGen} implies that 
\begin{eqnarray*}
\overline{\nu}_{\theta}(0)
&\leq&\frac{2}{\ln(2)} \sqrt{\frac{ec}{\xi}}\left(2^{5/2}\Gamma\left(\frac{3}{2}\right)+2\sqrt{2\pi}\ln^{\delta/2}\left(\frac{2}{\xi}\right)\right)\\
&=&\frac{4\sqrt{2e\pi}}{\ln(2)} \sqrt{\frac{c}{\xi}}\left(1+\ln^{\delta/2}\left(\frac{2}{\xi}\right)\right)\\
&\le&\frac{8\sqrt{2e\pi}}{\ln(2)} \sqrt{\frac{c}{\xi}}\ln^{\delta/2}\left(\frac{3}{\xi}\right),
\end{eqnarray*}
where the second equation follows from the equality \(\Gamma(3/2)=\sqrt{\pi}/2\) and the last one from the fact that \(1\) and  \(\ln(2/\xi)\) are upper-bounded by  \(\ln(3/\xi)\). On the other hand,  \eqref{eq:SjUpperBoundGeom} and the inequality \(\delta\leq2\) show that, for \(j\geq 0\), \begin{displaymath}
\overline{\nu}_{\theta}(j)\leq {\frac{20\sqrt{c}}{e\ln(2)}}\frac{e^{-\xi j/2}}{\xi}.
\end{displaymath}
As \(\overline{\nu}_{\theta}(j)\le\overline{\nu}_{\theta}(0)\), it follows that
\begin{equation}\label{eq:nuBarZetaBound}
\overline{\nu}_{\theta}(j)\leq \min\left(\frac{8\sqrt{2e\pi}}{\ln(2)} \sqrt{\frac{c}{\xi}}\ln^{\delta/2}\left(\frac{3}{\xi}\right),{\frac{20\sqrt{c}}{e\ln(2)}}\frac{e^{-\xi j/2}}{\xi}\right).
\end{equation} 
Combining \eqref{eq:deltaBound}, \eqref{eq:nuBarZetaBound} and  \eqref{eq:varfHatkOblivious} yields \eqref{eq:kVarHatfkBoundExpoNu}.

Assume now that  \(\nu(i)\le c/(i+1)\) for \(i\geq0\). A calculation similar to the one above shows that, for \(i\geq0\),\begin{equation*}\omega(i)\leq\frac{4c}{\ln^{2}(2)} \frac{\ln^{\delta}(i+2)}{i+1}.
\end{equation*} As \(\delta\leq2\), by Proposition~\ref{pr:exponentialGeomDecay},\begin{eqnarray*}
\overline{\nu}_{\theta}(0)&\leq&\frac{14\sqrt{c}}{\ln(2)}\ln^{\delta/2+1}\left(\frac{8}{\xi^{2}}\right)\\&\leq&{\frac{56\sqrt{c}}{\ln(2)}}\ln^{\delta/2+1}\left(\frac{3}{\xi}\right).
\end{eqnarray*}
Hence, for \(j\geq0\), 
\begin{equation}\label{eq:nuBarZetaBoundGeom}
\overline{\nu}_{\theta}(j)\leq \min\left({\frac{56\sqrt{c}}{\ln(2)}}\ln^{\delta/2+1}\left(\frac{3}{\xi}\right),{\frac{20\sqrt{c}}{e\ln(2)}}\frac{e^{-\xi j/2}}{\xi}\right).
\end{equation}
Combining \eqref{eq:deltaBoundGeom} and \eqref{eq:nuBarZetaBoundGeom} with  \eqref{eq:varfHatkOblivious} yields \eqref{eq:kVarHatfkBoundExpoGoemNu}. \qed
\section{Proof of Proposition~\ref{pr:implementationDetails}}
 We first prove the correctness of Algorithm~\ref{alg:Bias}. Denote by \(S_{K}[0],\dots,S_{K}[h]\)   the values of  \(S[0],\dots,S[h]\)  at the end of LR.  It can be shown by  induction that if \(X_{0}[0]= X_{0}\) then  \(X_{i}[0]\sim X_{i}\) for \(0\leq i\leq K\) and \begin{equation}\label{eq:Sout[0]}
S_{K}[0]\sim\sum ^{K-1}_{i=B}f(X_{i}).
\end{equation}  Hence, at the end of line 4 of Algorithm~\ref{alg:Bias},  \(X[0]\sim X_{k2^{N}}\).

Assume now that  \(h=1\) in Algorithm~\ref{alg:LR}, and that  \(X_{0}[0]\sim X_{m}\) for some non-negative integer \(m\), and that \(X_{0}[1]= X_{0}\).  Denote by \(V_{0},V_{1},\dots,V_{K-1}\)  the successive copies of \(U_{0}\) generated by Algorithm~\ref{alg:LR}. We assume that \(X_{0}[0],V_{0},\dots,V_{K-1}\) are independent. We show by induction on \(i\) that, for \(0\leq i\leq K\),
\begin{equation}\label{eq:recursionMarkovChainDist}
((X_{j}[0],X_{j}[1]),0\leq j\leq i)\sim((X_{j-m',m+m'},X_{j-m',m'}),0\leq j\leq i). 
\end{equation}
The base case holds since \(X_{-m',m+m'}\sim X_{m}\). Assume that \eqref{eq:recursionMarkovChainDist} holds for \(i\). Step 11 in Algorithm~\ref{alg:LR} shows that  \(X_{i+1}[0]= g(X_{i}[0],V_{i})\) and \(X_{i+1}[1]= g(X_{i}[1],V_{i})\). Similarly, \eqref{eq:recXim} shows that \(X_{i+1-m',m+m'}=g(X_{i-m',m+m'},U_{i-m'})\) and  \(X_{i+1-m',m'}=g(X_{i-m',m'},U_{i-m'})\).   Since \(V_{i}\sim U_{i-m'}\) and \(V_{i}\) is independent of \((X_{i}[0],X_{i}[1])\), and \(U_{i-m'}\) is independent of \((X_{i-m',m+m'},X_{i-m',m'}),\) and both sides of \eqref{eq:recursionMarkovChainDist} are Markov chains, this implies that  \eqref{eq:recursionMarkovChainDist} holds for \(i+1\).    Thus,\begin{eqnarray}\label{eq:Sout01}
(S_{K}[0],S_{K}[1])&=&(\sum ^{K-1}_{i=B}f(X_{i}[0]),\sum ^{K-1}_{i=B}X_{i}[1])\nonumber\\
&\sim&(\sum ^{K-1}_{i=B}f(X_{i-m',m+m'}),\sum ^{K-1}_{i=B}f(X_{i-m',m'})).
\end{eqnarray}   
Applying \eqref{eq:Sout01} with \(B=k(2^{N}-1)+b'(k)\),  \(m=K=k2^{N}\) and \(m'=k(2^{N}-1)\), and using \eqref{eq:fklDef}, shows after some simplifications  that, at the end of line 6 of Algorithm~\ref{alg:Bias}, \begin{equation*}
\frac{1}{k-b'(k)}(S[0],S[1])\sim(f_{k,N+1},f_{k,N}).
\end{equation*}
By \eqref{eq:ZkDef}, line 7 of Algorithm~\ref{alg:Bias} outputs a random variable with the same distribution as  \(-Z_{k}^{(b'(k))}\). 

We now prove the correctness of Algorithm~\ref{alg:RHAlk}. By~\eqref{eq:Sout[0]}, at the end of line 3 of Algorithm~\ref{alg:RHAlk}, \begin{equation*}
S[0]\sim\sum^{k-1}_{i=b'(k)}f(X_{i}),
\end{equation*} and so line 4 is consistent with \eqref{eq:fk0def}.
  As line 7 is executed with probability \(1-q\)
and line 9 is executed with probability \(q\), \eqref{eq:defHatfk} shows that Algorithm~\ref{alg:RHAlk} outputs a random variable with the same distribution as \(\hat f_{k}\).
\qed
\section{Proof of Proposition~\ref{pr:GG1Binary}}
We have \(X_{n}=\max_{0\leq j\leq n}[S_{n}-S_{j}]\) for  \(n\geq0\),   where  \(S_{n}:=\sum^{n-1}_{k=0}U_{k}\), with \(S_{0}:=0\)   \citep[\S I, Eq.~(1.4)]{asmussenGlynn2007}). For \(i,m\ge0\), using \eqref{eq:recXim}, it can be shown by induction  on \(i\) that \(X_{i+m,-m}=\max_{m\leq j\leq i+m}[S_{i+m}-S_{j}]\).  Hence \begin{equation}\label{eq:XiplusmGG1Bin}
X_{i+m}=\max(X_{i+m,-m},\max_{0\leq j\leq m-1}[S_{i+m}-S_{j}]).
\end{equation} By applying Proposition~\ref{pr:shiftedPair} with \(m'=0\) and \(n=i\),  it follows that \begin{eqnarray*}E((f(X_{i,m})-f(X_{i}))^{2})&=&
E((f(X_{i+m})-f(X_{i+m,-m}))^{2})\\&\leq&\Pr (X_{i+m}\neq X_{i+m,-m})\\
&\leq&  \sum^{m-1}_{j=0}\Pr (S_{i+m}-S_{j}>0)
\\&=&\sum^{m-1}_{j=0}\Pr (S_{i+m-j}>0)
\\&=&\sum^{m}_{j=1}\Pr (S_{i+j}>0),
\end{eqnarray*}
where the third equation follows from \eqref{eq:XiplusmGG1Bin}, and  the fourth equation follows by noting that  \(S_{i+m}-S_{j}\sim S_{i+m-j}\). Set \(\mu'=E(U_{0})\) and \(\mu ''=E({U'_{0}}^{6})\), where \(U'_{i}=U_{i}-\mu'\) for \(i\geq0\). For \(n\geq0\), let   \(S'_{n}:=\sum^{n-1}_{j=0}U'_{j}\). Since \(U'_{n}\) and \(S'_{n}\) are centered and independent,  the binomial theorem shows that, for \(n\geq0\),\begin{eqnarray*}E({S'_{n+1}}^{6})
&=&E({S'_{n}}^{6})+15E({U'_{n}}^{2})E({S'_{n}}^{4})+15E({U'_{n}}^{4})E({S'_{n}}^{2})+E({U'_{n}}^{6})\\
&\le&E({S'_{n}}^{6})+15(\mu '')^{1/3}(E({S'_{n}}^{6}))^{2/3}+15(\mu '')^{2/3}(E({S'_{n}}^{6}))^{1/3}+\mu '',
\end{eqnarray*} where the second equation follows from Jensen's inequality \((E(V))^{\alpha}\leq E(V^{\alpha})\) for \(\alpha\geq1\) and non-negative random variable \(V\) with finite \(\alpha\)-moment. If follows by induction on \(n\) that \(E({S'_{n}}^{6})\leq125n^{3}\mu''\).
Hence\begin{eqnarray*}
\Pr(S_{n}>0)&=&\Pr(S'_{n}>-\mu'n)\\
&\le&E(\frac{{S'_{n}}^{6}}{(\mu'n)^{6}})\\
&\le&125\frac{\mu''}{(\mu')^{6}}n^{-3},
\end{eqnarray*}where the second equation follows from Markov's inequality and the fact that \(\mu'<0\). Hence,\begin{eqnarray*}
E((f(X_{i,m})-f(X_{i}))^{2})
&\le&125\frac{\mu''}{(\mu')^{6}}\sum^{\infty}_{j=1}(i+j)^{-3}\\
&\le&125\frac{\mu''}{(\mu')^{6}}\left((i+1)^{-3}+\int^{\infty}_{i+1}x^{-3}dx\right)\\
&\leq&\frac{375\mu''}{2(\mu')^{6}}(i+1)^{-2}.
\end{eqnarray*}\qed \section{Proof of Theorem~\ref{th:Gaussian}}
We first prove the exponential bound by showing that Proposition~\ref{pr:contractive1} holds with  \(\rho(x,x')=||\sqrt{V^{-1}}(x-x')||\) for \((x,x')\in F\times F\). Fix non-negative integers \(i\) and \(m\) with \(i+m\ge0\). 
It follows from \cite[Lemma 1]{kahaleGaussian2019} that, conditional on \(e_{-m},\dots,e_{i-1}\),   \(\begin{pmatrix}X_{i} \\
X_{i,m} \\
\end{pmatrix}\)  is a centered Gaussian vector with       \(\cov(X_{i})\le V\) and
  \(\cov(X_{i,m})\le V\). Consequently, by~\eqref{eq:defkappaLipsG},
\begin{equation*}
E((f(X_{i,m})-f(X_{i}))^{2}|e_{-m},\dots,e_{i-1})\leq \hat\kappa^{2}( E(||X_{i,m}-X_{i}||^{2}|e_{-m},\dots,e_{i-1}))^{\hat\gamma}.
\end{equation*}Taking expectations and using the tower law and Jensen's inequality implies that
\begin{equation}\label{eq:GaussianLipschitz}
E((f(X_{i,m})-f(X_{i}))^{2})\leq \hat\kappa^{2}( E(||X_{i,m}-X_{i}||^{2}))^{\hat\gamma}.
\end{equation}As \(\rho^{2}(x,x')=(x-x')^{T}{V^{-1}}(x-x')\), we have \(\rho^{2}(x,x')\ge \lambda_{\max}^{-1}||x-x'||^{2}\). Hence
\begin{equation*}
E((f(X_{i,m})-f(X_{i}))^{2})\leq \hat\kappa^{2}{\lambda_{\max}}^{\hat\gamma}( E(\rho^{2}(X_{i,m},X_{i})))^{\hat\gamma}.
\end{equation*}  
Thus, \eqref{eq:defkappaLips} holds with \(\kappa=\hat \kappa{\lambda_{\max}}^{\hat \gamma /2}\) and \(\gamma=\hat \gamma\). 

For \(m\ge0\), we have \(\rho(X_{0},X_{m})=\,||\sqrt{V^{-1}}X_{m}||\). It follows from   \cite[Lemma 2]{kahaleGaussian2019} that \(E(X_{m})=0\) and \(\cov(\sqrt{V^{-1}}X_{m})\leq I\). As the variance of each entry of \(\sqrt{V^{-1}}X_{m}\) is at most \(1\) and its expectation is \(0\), we have \(E(||\sqrt{V^{-1}}X_{m}||^{2})\leq d\). Consequently,  \eqref{eq:distanceFinite} holds with \(\kappa'=d\). 

Furthermore, for \(x,x'\in F\), we have
\begin{equation*}
g(x,g_{0},e_{0})-g(x',g_{0},e_{0})=(I-Ve_{0}e_{0}^{T})(x-x').
\end{equation*}Thus
\begin{equation*}
\rho(g(x,g_{0},e_{0}),g(x',g_{0},e_{0}))=||Py||,
\end{equation*}
where \(P=I-\sqrt{V}e_{0}e_{0}^{T}\sqrt{V}\) and \(y=\sqrt{V^{-1}}(x-x')\). A standard calculation (e.g., \citet{kahaleGaussian2019}) shows that \(P^{2}=P\) and \(E(P)=I-d^{-1}V\). Hence,\begin{eqnarray*}E(\rho^{2}(g(x,g_{0},e_{0}),g(x',g_{0},e_{0})))&=&E(y^{T}Py)\\
&=&y^{T}(I-d^{-1}V)y\\
&\le&(1-\lambda_{\min}/d)||y||^{2},
\end{eqnarray*}
 where the last equation follows from the fact that the largest eigenvalue of \(I-d^{-1}V\) is \(1-\lambda_{\min}/d\). As \(\rho(x,x')=||y||\), \eqref{eq:distanceContractive} holds for \(\eta=1-\lambda_{\min}/d\).
By Proposition~\ref{pr:contractive1}, \begin{equation*}
E(||f(X_{i,m})-f(X_{i})||^{2})\leq \hat\kappa^{2}(\lambda_{\max}d)^{\hat\gamma}(1-\lambda_{\min}/d)^{\hat\gamma i}.
\end{equation*}
We now prove the geometric bound. For \(n\geq0\), set  \(P_{n}:=I-\sqrt{V}e_{n}{e_{n}}^{T}\sqrt{V}\), and \(M_{n}:=P_{{n-1}}P_{{n-2}}\cdots P_{{0}}\), with \(M_{0}:=I\).  
 By~\eqref{eq:recXim}, for \(i,m\ge0\), \begin{equation}\label{eq:recXimGauss}
X_{i+1,m}=X_{i,m}+(g_{i}-e_{i}^{T}X_{i,m})(Ve_{i}).
\end{equation}As \eqref{eq:recXimGauss} also holds for \(m=0\), it follows that 
\begin{equation*}
X_{i+1,m}-X_{i+1}=(I-Ve_{i}e_{i}^{T})(X_{i,m}-X_{i}).
\end{equation*}
Consequently, it can be shown by induction on \(i\) that \(X_{i,m}-X_{i}=\sqrt{V}M_{i}\sqrt{V^{-1}}X_{0,m}\)  for \(i,m\ge0\). Hence 
\begin{eqnarray*} E(||X_{i,m}-X_{i}||^{2}) &=&E(||\sqrt{V}M_{i}\sqrt{V^{-1}}X_{0,m}||^{2})\\
&=&E(\tr(\sqrt{V}M_{i}\sqrt{V^{-1}}X_{0,m}{X_{0,m}}^{T}\sqrt{V^{-1}}{M_{i}}^{T}\sqrt{V}))\\
&=&E(\tr(\sqrt{V}M_{i}\sqrt{V^{-1}}E(X_{0,m}{X_{0,m}})\sqrt{V^{-1}}{M_{i}}^{T}\sqrt{V})),
\end{eqnarray*}
The second equation follows from the equality \(||Z||^{2}=\tr(ZZ^{T})\),  whereas the last equation follows from the independence of \(M_{i}\) and \(X_{0,m}\). On the other hand, \begin{eqnarray*}E(X_{0,m}{X_{0,m}}^{T})&=&E(X_m{X_{m}}^{T})\\&\le&V,\end{eqnarray*}
where the last equation follows from \cite[Lemma 2]{kahaleGaussian2019}. Consequently,
\begin{eqnarray*}
E(||X_{i,m}-X_{i}||^{2})
&\le&E(\tr(\sqrt{V}M_{i}{M_{i}}^{T}\sqrt{V}))\\
&=&E(\tr({M_{i}}^{T}VM_{i}))\\
&\le&\frac{d^{2}}{i+1},
\end{eqnarray*}
where the second equation follows from the equality \(\tr(AB)=\tr(BA)\), and the last equation follows from \cite[Theorem 1]{kahaleGaussian2019}.  Applying \eqref{eq:GaussianLipschitz} concludes the proof. 
\qed
\bibliography{poly}
\end{document}